\numberwithin{equation}{section}
\newtheorem{theorem}{Theorem}[section]
\newtheorem{lemma}[theorem]{Lemma}
\theoremstyle{definition} 
\newtheorem{definition}[theorem]{Definition}
\newtheorem{example}[theorem]{Example}
\theoremstyle{plain} 
\newtheorem{question}[theorem]{Question}
\newtheorem{proposition}[theorem]{Proposition}
\newtheorem{corollary}[theorem]{Corollary}
\newtheorem{fact}[theorem]{Fact}
\newtheorem{claim}[theorem]{Claim}
\newtheorem*{maintheorem*}{Main Theorem}
\newtheorem*{conjecture*}{Conjecture}
\newtheorem*{theorem*}{Theorem}
\newtheorem*{proposition*}{Proposition}
\newtheorem*{corollary*}{Corollary}
\newtheorem{remark}[theorem]{Remark} 
\newtheorem{observation}[theorem]{Observation}
\theoremstyle{remark}  
\newtheorem*{remarks*}{Remarks}
\newtheorem*{remark*}{Remark}
\newtheorem*{claim*}{Claim}
\newtheoremstyle{mystyle}
  {3pt}{3pt}{\itshape}{}{\bfseries}{}{.5em}
  {\thmname{#1}\thmnumber{ #2}\thmnote{. #3}}
\theoremstyle{mystyle}
\newtheorem*{maintheoremnew*}{Main Theorem}
\newtheorem*{conjecturenew*}{Conjecture}
\newtheorem*{theoremnew*}{Theorem}
\newtheorem*{propositionnew*}{Proposition}
\newcommand{\nc}{\newcommand}
\nc{\nothing}[1]{}
\nc{\dom}{{\rm dom}}
\nc{\card}{{\rm card}}
\nc{\lh}{{\rm lh}}
\nc{\lgg}{{\rm lh}}
\nc{\rge}{\mbox{\rm range}}
\nc{\cf}{{\rm cf}}
\nc{\uhr}{\restriction}
\nc{\supt}{{\rm supt}}
\nc{\supp}{{\rm supp}}
\nc{\Lim}{{\rm Lim}}
\nc{\Leb}{{\rm Leb}}
\nc{\modd}{{\rm mod}}
\nc{\RO}{{\rm RO}}
\nc{\prob}{{\rm Prob}}
\nc{\Ord}{{\rm On}}
\nc{\nco}{\DeclareMathOperator}
\nco{\cc}{cc}
\nco{\rk}{rk}
\nco{\llower}{lower}
\nco{\order}{o}
\nco{\Nm}{Nm}
\nco{\ppower}{pp}
\nco{\pcf}{pcf} 
\nco{\tcf}{tcf} 
\nco{\tlim}{tlim} 
\nco{\limtext}{lim} 
\nco{\prodt}{{\textstyle \prod}}
\nco{\symdiff}{\triangle}
\nco{\dom}{dom}
\nco{\card}{card}
\nco{\lh}{lh}
\nco{\lt}{lt}
\nco{\lgg}{lh}
\nco{\hgt}{ht}
\nco{\rge}{range}
\nco{\otp}{otp}
\nco{\trunk}{tr}
\nco{\cf}{cf}
\nco{\nex}{next}
\nc{\uhr}{\restriction}
\nco{\reduction}{red}
\nco{\supt}{supt}
\nco{\supp}{supp}
\nco{\Lim}{Lim}
\nco{\Leb}{Leb}
\nco{\modd}{mod}
\nco{\invariant}{inv}
\nco{\invzwei}{{inv}^2_\kappa}
\nco{\inveins}{{inv}^1_\kappa}
\newenvironment{myrules}
{\begin{list}{}
{
 \setlength{\leftmargin}{1.1cm}
 \setlength{\labelwidth}{1.1cm}
 \setlength{\labelsep}{0.2cm}
 \setlength{\parsep}{0.7ex plus 0.2ex minus 0.1 ex}
 \setlength{\itemsep}{0.5ex plus 0.2 ex minus 0ex}
}}{\end{list}}
\def\mathunderaccent#1#2 {\let\theaccent#1\skewfactor#2
\mathpalette\putaccentunder}
\def\putaccentunder#1#2{\oalign{$#1#2$\crcr\hidewidth
\vbox to.2ex{\hbox{$#1\skew\skewfactor\theaccent{}$}\vss}\hidewidth}}
\def\name{\mathunderaccent\tilde-3 }
\nc{\nname}{\name}
\newcommand{\la}{\langle}
\newcommand{\ra}{\rangle}
\nc{\lebesgue}{\ensuremath{{\cal N}}}
\nc{\nulll}{\ensuremath{{\cal N}}}
\nc{\ksigma}{\ensuremath{{\bf K}_\sigma}}
\nc{\ga}{\ensuremath{\frak a}}
\nc{\AAA}{{\cal A}}   
\nc{\gc}{\ensuremath{\frak c}}
\nc{\gn}{\ensuremath{\frak n}}
\nc{\gh}{\ensuremath{\frak h}}
\nc{\gd}{\ensuremath{\frak d}}
\nc{\gb}{\ensuremath{\frak b}}
\nc{\gro}{\ensuremath{\frak g}}
\nc{\gu}{\ensuremath{\frak u}}
\nc{\gr}{\ensuremath{\frak r}}
\nc{\gt}{\ensuremath{\frak t}}
\nc{\fff}{\ensuremath{\frak f}}
\nc{\gm}{\ensuremath{\mathfrak{mcf}}}
\nc{\gge}{\ensuremath{\mathfrak e}}
\nc{\cfupro}{\ensuremath{\cf(\upro)}}
\nc{\cfvpro}{\ensuremath{\cf(\vpro)}}
\nc{\gp}{\ensuremath{\frak p}}
\nc{\gs}{\ensuremath{\frak s}}
\nc{\gothe}{\mathfrak{e}}
\nc{\add}{\mbox{\ensuremath{{\rm add}}}}
\nc{\cov}[1]{\mbox{\ensuremath{{\rm cov}(#1)}}}
\nc{\unif}[1]{\mbox{\ensuremath{{\rm unif}(#1)}}}
\nc{\cof}[1]{{\mbox{\ensuremath{\rm cof}(#1)}}}
\nc{\addd}[2]{\mbox{\ensuremath{{\rm add}^{#1}(#2)}}}   
\nc{\covv}[2]{\mbox{\ensuremath{{\rm cov}^{#1}(#2)}}}   
\nc{\uniff}[2]{\mbox{\ensuremath{{\rm unif}^{#1}(#2)}}} 
\nc{\coff}[2]{{\mbox{\ensuremath{\rm cof}^{#1}(#2)}}}
\nc{\cd}{Cicho\'n's Diagram}
\nc{\AC}{\mbox{\sf AC}}
\nc{\MA}{\mbox{\sf MA}}
\nc{\PFA}{\mbox{\sf PFA}}
\nc{\OCA}{\mbox{\sf OCA}}
\nc{\GCH}{\mbox{\sf GCH}}
\nc{\CH}{\mbox{\sf CH}}
\nc{\ZFC}{\mbox{\sf ZFC}}
\nc{\sch}{\mbox{\sf SCH}}
\nc{\ZF}{\mbox{\sf ZF}}
\nc{\NCF}{\mbox{\sf NCF}} 
\nc{\FD}{\mbox{\sf FD}}   
\nc{\eps}{\varepsilon}
\renewcommand{\epsilon}{\varepsilon}
\newcommand{\cal}{\mathcal}
\nco{\tnode}{term}
\nco{\acc}{acc}
\nco{\last}{last}
\nc{\mup}{m_{\rm up}}
\nc{\mdn}{m_{\rm dn}}
\nco{\may}{may}
\nco{\aver}{av} 
\nco{\norm}{nor} 
\nco{\val}{val} 
\nco{\dis}{dis} 
\nco{\basis}{basis}
\nco{\pos}{pos}
\nco{\spec}{spec}
\nc{\err}{\mbox{err}}
\nc{\eee}{\mbox{e}}
\nco{\Expect}{Exp}
\nco{\rt}{rt}
\nco{\tr}{tr} 
\nco{\suc}{succ}
\nco{\osucc}{osucc} 
\nco{\halv}{h}
\nco{\Add}{Add}
\nco{\Cov}{Cov}
\nco{\Unif}{Unif}
\nco{\Cof}{Cof}
\nco{\htt}{ht}
\nco{\cl}{cl}
\nco{\Levy}{Levy}
\nco{\set}{set}
\nc{\bbforcing}{\mathbb A}
\nc{\itername}{\mathfrak q}
\nc{\iterp}{\mathfrak p}
\nc{\iterq}{\mathfrak q}
\nc{\invcm}{\rm inv_{cm}}
\nc{\invcf}{\rm inv_{cf}}
\nc{\invgm}{\rm inv_{gm}}
\nc{\Q}{\mathbb Q}
\nc{\subsetsim}{\underset{\raise0.6em\hbox{$\sim$}}{\subset}}
\newcommand{\subsim}{\underset{\raise20pt\hbox{$\rightarrow$}}{\rightarrow}}
\newcommand{\ssim}{\overset{\raise-40pt\hbox{$\leftarrow$}}{\subsim}}
\nc{\rest}{\restriction}
\nc{\bF}{\mathbb F}
\nc{\bH}{\mathbb H}
\nc{\R}{\mathbb R}
\nc{\M}{\mathbb M}
\nc{\bQ}{\mathbb Q}
\nc{\bS}{\mathbb S}
\nc{\qaam}{{\mathbb Q}^{\rm a, amoeba}_\kappa}
\nc{\bP}{\mathbb P}
\nc{\bG}{\mathbf G}
\nc{\cJ}{\mathcal J}
\nc{\cF}{\mathscr F}
\nc{\cB}{\mathcal B}
\nc{\bV}{\mathbf{V}}
\nc{\cE}{\mathcal E}
\nc{\cP}{\mathscr P}
\nc{\cU}{\mathcal U}
\nc{\cV}{\mathcal V}
\nc{\cW}{\mathcal W}
\nc{\cC}{\mathcal C}
\nc{\cT}{\mathcal T}
\nc{\cX}{\mathcal X}
\nc{\cD}{\mathcal D}
\nc{\cO}{\mathscr O}
\nc{\cS}{\mathcal S}
\nc{\cA}{\mathcal A}
\nc{\cG}{\mathcal G}
\nc{\cI}{\mathcal I}
\nc{\ba}{\mathbf{a}}
\renewcommand{\phi}{\varphi}
\newcommand{\bb}{\mathfrak{b}}
\newcommand{\cohen}{\poset{C}}
\newcommand{\bC}{\poset{C}}
\newcommand{\conc}{\smallfrown}
\newcommand{\concat}{{}^\smallfrown} 
\newcommand{\club}{\mathrm{Club}}
\newcommand{\ideal}{\mathcal}
\newcommand{\force}{\Vdash}
\newcommand{\On}{\text{On}}
\newcommand{\poset}{\mathbb}
\newcommand{\restric}{{\upharpoonright}}
\newcommand{\silver}{\poset{V}}
\newcommand{\mathias}{\poset{MA}}
\newcommand{\SSigma}{\mathbf{\Sigma}}
\newcommand{\spl}{\poset{SP}}
\newcommand{\such}{\: : \:}
\nc{\comment}[1]{#1}
\nco{\codes}{code}
\nco{\taille}{taille}
\nco{\ter}{ter}
\nco{\Fn}{Fn}
\nco{\borel}{Bor}
\nco{\height}{ht}
\nco{\level}{Lev}
\nco{\levy}{Coll}
\nco{\ot}{ot}
\nco{\rank}{Rank}
\nco{\splitting}{Split}
\nco{\splitlevel}{ns}
\nco{\stem}{stem}
\nco{\successor}{succ}
\nco{\Succ}{Suc}
\nco{\splsuc}{splsuc}
\nco{\Lev}{Lev}
\nco{\en}{en}
\nco{\term}{Term}
\newcommand{\xmark}{\ding{55}}
\newcommand{\cmark}{\ding{51}}
\begin{document}

\title[Trees with positive upper density, compiled \today]{Mathias and Silver forcing parametrized by density}
\author{Giorgio Laguzzi, Heike Mildenberger, Brendan Stuber-Rousselle} 
\begin{abstract}
We define and investigate versions of Silver and Mathias forcing with respect to lower and upper density. We focus on properness, Axiom A, chain conditions, preservation of cardinals and adding Cohen reals. We find rough forcings that collapse $2^\omega$ to $\omega$, while others are surprisingly gentle. We also study connections between regularity properties induced  by  these parametrized forcing  notions and the Baire property.
\end{abstract}
\date{Feb.\ 9, 2021}
\maketitle

\section{Introduction}

Forcings consisting of tree conditions are the \emph{dramatis personae} extensively studied in descriptive set theory and set theory of the reals. The original interest on this type of forcings was mainly due to their applications in resolving problems of independence risen from the study of cardinal characteristics, infinitary combinatorics and regularity properties. Through the years, the study of the combinatorial properties of tree-forcings has been intensified and has been interesting on its own.

From the forcing point of view, trees can be thought as conditions whose stem decides a finite fragment of the generic real, and the rest of the tree above describes the possible paths of the generic real. In this sense, Cohen forcing can be understood as the simplest tree-forcing notion, whose conditions decide a finite fragment of the generic and then leave all paths extending the stem possible. In general, the fatter the tree, the more freedom the path of the generic real; the slimmer the tree, the more restrictive the conditions on the path of the generic real.  Under this point of view a Sacks tree can be seen as the other extreme, since in this case the tree can be shrunk as much as desired, and the only requirement is to keep perfectness. 

Many other tree-forcings in between have been introduced and extensively studied, among them Silver and Mathias trees\footnote{The nodes of the Mathias (or Silver) tree $p$ are all $t \in 2^{<\omega}$ that are initial segments of one of the possible generic branches that are compatible with a Mathias condition $(s,A)$ (or  
a Silver condition $f_p$).}. In this paper we focus on studying some variants where the set of nodes above the stem are governed by restrictions imposed on the density of the set of splitting nodes. Imposing fatness or slimness conditions though, may result in a non-proper forcing.

Each condition in Mathias forcing and in Silver forcing, when conceived as a tree, comes with an infinite set $A_p$ such that any node $t$ of $p$ whose length is in $ A_p$ is a splitting node. Also other tree forcings, like Sacks or Miller, can be thinned out to the subsets of those conditions who come with such a set $A_p$ of overall splitting levels.  In Bukovsk\'{y}-Namba forcing the conditions with uniform splitting levels are even dense, see e.g.\ \cite[Theorem 2.2]{Bukovsky_Coplakova}. 

We let the $A_p$ range over prescribed families that are sets of positive lower or upper density in $\omega$. There is some work on forcings of this type. Grigorieff \cite{Grigorieff} parametrised Silver forcing by having the $A_p$ range over a $P$-point and Halbeisen \cite[Ch 24]{Halbeisen} generalised this to $P$-families.  Mathias forcing with a Ramsey ultrafilter \cite{Mathias:happy} is a versatile notion of forcing. Farah and Zapletal \cite[Ch.\ 9]{Farah2006} used 
the coideal of the density zero ideal as a reservoir
for the infinite component of Mathias conditions.\\

In Sections 2 to 5 we investigate whether the variants we introduce of Silver and of Mathias forcing  are proper at all. We show that Mathias with lower density $\geq \varepsilon$ is a disjoint union of $\sigma$-centered forcings (see Proposition \ref{sigma_centered}) and that Silver forcing with positive lower density collapses $2^\omega$ to $\omega$ (see Theorem \ref{collapse}). We also show that the lower density is far from the notion of a measure (see Proposition \ref{continuum_antichains}). This result fits in the framework presented in \cite{MR4054777}.

In Section 6 and 7 we are concerned with regularity properties of these forcings. Regularity properties of tree-forcings by themselves are a well studied field in mathematics. The following table illustrates which of the most popular regularity property of a subset of $2^\omega$ correspond to which tree-forcing $\poset P$.
\vspace{3mm}
\begin{center}
\begin{tabular}{|c|c|}\hline
 Tree-forcing $\mathbb{P}$  & $\mathbb{P}$-measurable \\ \hline
Cohen  $\mathbb{C}$   & Baire property \\ \hline
Random $\mathbb{B}$ & Lebesgue  measurable \\ \hline
Silver $\mathbb{V}$  & completely  doughnut \cite{PriscoH00} \\ \hline
Sacks $\mathbb{S}$  & Marczewski set \cite{Szpilrajn1935} \\ \hline
Mathias $\mathias$ & completely Ramsey \\ \hline
\end{tabular}
\end{center}
 \vspace{3mm}
In many cases adding a Cohen real is enough to establish a dependence between Baire property and the measurability given by the tree-forcing. This was made explicit in \cite[Proposition 3.1.]{Laguzzi_Stuber}. Proposition \ref{gamma p implies gamma q} is an improvement of this result.

In Section 8 we construct a model in which all $\On^\omega$-definable sets are $\silver^+_\varepsilon$-measurable but $\Sigma^1_2(\cohen)$ fails. In particular $\Sigma^1_2(\silver^+_\varepsilon) \Rightarrow \Sigma^1_2(\cohen)$ fails.

In Section 9 we return to a question we asked in \cite{fat}, namely: Does the splitting forcing $\mathbb{SP}$ have the Sacks property? We give a partial negative answer in Theorem~\ref{not_Sacks}.  Independently, Jonathan Schilhan found a complete negative answer to our question \cite{Schilhan}.

In the remainder of this introduction, we set up our notation. 

\begin{definition}\label{1.1}
Let $X$ be non-empty.
\begin{enumerate}[label=(\alph*)]
\item We let $X^{<\omega} = \{s \such (\exists n < \omega )(s \colon n \to X)\}$.
The set $X^{<\omega}$ is partially ordered by the initial segment relation $\trianglelefteq$, namely  $s \trianglelefteq t$ if $s = t \restric \dom(s)$. We use  $\triangleleft$ for the strict relation.

\item A set $p \subseteq X^{<\omega}$ is called a \emph{tree} if it is closed under initial
 segments, i.e. $(t \in p \wedge s \trianglelefteq t )\rightarrow s \in p$.
The elements of $p$ are called \emph{nodes}.

\item  A node is called a \emph{splitting node} if it has at least two immediate successors in $p$. We write  $\splitting(p)$ for the set of splitting nodes of $p$.
\item  A tree p is called \emph{perfect}
  if for every $s \in p$ there is a splitting node $t \trianglerighteq s$.

\item For $t \in p$, we write $\splsuc(t)$ for the shortest splitting node extending $t$. When $t$ is splitting, then $\splsuc(t)=t$.
\item The \emph{stem of $p$}, short $\stem(p)$, is the $\trianglelefteq$-least splitting node of $p$ if it exists.

\item  For $n < \omega $ we let $\splitting_n (p)$ consist of all splitting nodes $t$ in $p$ such that there are  exactly $n$  splitting nodes preceding $t$ i.e.,  $ \stem(p) = t_0 \triangleleft t_1 \triangleleft  \dots  \triangleleft t_{n-1} \triangleleft t$, in particular $\splitting_0(p)= \{\stem (p) \}$. Analogously, we define $\splitting_{\leq n}(p)$.
\item For $n < \omega$, let $\Lev_n (p) := \{ t \in p\such |t| = n\}$.
\item For $t \in p$ we let $p \restric t = \{s \in p \such s\trianglelefteq t \vee t \triangleleft s\}$.  

\item Let $p\subseteq X^{<\omega}$ be a tree such that for any $t \in p$ and any $n$ there is $s \in p$, 
$t \trianglelefteq s$ such that $|s|>n$. The body or rump of a tree $p$, short $[p]$, is
 the set $\{f \in X^\omega \such (\forall n)( f \restric n\in p)\}$. 
\end{enumerate}
\end{definition}

\begin{definition}\label{trees}
A partial ordering $(\poset P,\leq )$ is called a \emph{tree-forcing}, if there is a non-empty set $X$ such that 
\begin{enumerate}
\item[-] All conditions $p \in \poset P$ are perfect trees on $X^{<\omega}$.
\item[-] For all $p\in \poset P$ and $t\in p$ the restriction $p\restric t $ is again a condition in $\poset P$.
\item[-] The partial order is the inclusion i.e., $q\leq p$ iff $q\subseteq p$.  
\end{enumerate}
\end{definition}

We are interested in tree-forcings $\mathbb{P}$ defined over $2^{<\omega}$ or $\omega^{<\omega}$ with the following additional property. 

\begin{definition}\label{uniformly_splitting}
Let $p$ be a perfect tree defined over $2^{<\omega}$ or $\omega^{<\omega}$. The tree $p$ is called \emph{uniformly splitting}, if there is an infinite set $A_p \in [\omega]^\omega$ such that 
$$\forall s\in p(|s| \in A_p \Leftrightarrow s\in \splitting (p)).$$
A tree-forcing $\poset P$ is called \emph{uniformly splitting tree-forcing}, if all conditions $p\in \poset P$ are uniformly splitting.
\end{definition}

In this paper we study two well-known examples of uniformly splitting tree forcings: The Mathias forcing $\mathias$ and the Silver forcing $\silver$.

\begin{example}
\begin{enumerate}
\item The Mathias forcing  is given by conditions $(s,A) \in \mathias$ if $s \in [\omega]^{<\omega}$ and $A \in [\omega]^\omega$ and $\max(s) < \min(A)$.
$(t,B) \leq (s, A)$ if $t \setminus s \subseteq A$, $t \supseteq s$, and  $B \subseteq A$.
Now a Mathias tree $p = p(s,A)$ is given by $p \subseteq  2^{<\omega}$ and
$t \in p$ if $t$ is the characteristic function of
$s \cup s'$ for some finite $s ' \subseteq A$.
Of course, now $A_p = A$, and stronger conditions correspond to subtrees. All trees are perfect and restrictions $p \restric t$ are again conditions.

\item A condition $f$ is a Silver condition if there is an infinite set $A$ such that $f : \omega \setminus A \rightarrow 2$.
So we get a Silver tree $p \subseteq 2^{<\omega}$ by letting 
\[
t \in p \leftrightarrow (\forall n \in |t| \setminus A) (t(n) = f(n)). 
\]
Then $A_p = A$. Stronger conditions are extensions and again correspond to subtrees.
\end{enumerate}
\end{example}

\begin{definition}
For a set $A\subseteq \omega$ we define the upper density $d^+(A)$ and lower density $d^-(A)$ of $A$ via:
\begin{enumerate}
\item $d^+(A):= \limsup\limits_{n \rightarrow \infty} \frac{|A\cap n|}{n}$, 

\item $d^-(A):= \liminf\limits_{n \rightarrow \infty} \frac{|A\cap n|}{n}.$
\end{enumerate}
\end{definition}

\begin{definition}\label{erweitert}
Let $\mathbb{P}$ be a uniformly splitting tree-forcing defined over $2^{<\omega}$ or $\omega^{<\omega}$. For $\varepsilon\in (0,1]$ we define two subforcings $\mathbb{P}^+_\varepsilon$ and $\mathbb{P}^-_\varepsilon$, and we define the upper and lower positive density versions:
\begin{enumerate}
\item $p\in \mathbb{P}^+_\varepsilon$ if $p\in \mathbb{P}$ and $A_p$ has upper density $\geq \varepsilon$.
\item $p\in \mathbb{P}^-_\varepsilon$ if $p\in \mathbb{P}$ and $A_p$ has lower density $\geq \varepsilon$.
\item $p\in \mathbb{P}^+$ if $p\in \mathbb{P}$ and $A_p$ has upper density $>0$.
\item $p\in \mathbb{P}^-$ if $p\in \mathbb{P}$ and $A_p$ has lower density $>0$.
\end{enumerate}
In all forcing orders, a condition $q$ is stronger than $p$ iff $q$ is a subset of $p$.   
\end{definition}


We focus our attention on $\mathias^+_\eps$, $\mathias^-_\eps$, $\mathias^+$, $\mathias^-$ and the same for Silver. We order our investigation now in pairs, according to the density requirement. Some steps work also for general $\poset P$.

\section{Upper density $\geq \varepsilon$}\label{upper_density_geq}

\begin{definition}\label{Axiom_A}
  A notion of forcing $(\bP,\leq)$ has \emph{Axiom A} if there are partial order relations $\la \leq_n \such n < \omega \ra$ such that
  \begin{myrules}
  \item[(a)] $q \leq_{n+1} p$ implies $q\leq_n p$ , $q\leq_0 p$ implies
    $q \leq p$,
  \item[(b)] If $\la p_n \such n < \omega \ra$ is a fusion sequence, i.e.,  a sequence such that
    for any $n$, $p_{n+1} \leq_n p_n$, then there is a lower bound $p \in \bP$, $p \leq_n p_n$. 
  \item[(c)] For any maximal antichain $A$ in $\bP$ and and $n\in \omega$ and any $p \in \bP$ there is $q \leq_n p$ such that only countably many elements of $A$ are compatible with $q$. Equivalently, for any open dense set $D$ and any $n$, $p$, there is a countable set $E_{p}$ of conditions in $D$ and $q\leq_n p$ such that $E_{p}$ is predense below $q$.

  A notion of forcing $(\bP,\leq)$ has \emph{strong Axiom A} if the set of compatible elements in (c) is even finite. 
 \end{myrules}
\end{definition}

Axiom A entails properness and
strong Axiom A implies ${}^\omega \omega$-bounding (see, e.g., \cite[Theorem 2.1.4,
  Cor~2.1.12]{RoSh:470}).

\begin{remark}\label{sequence}
Let $p\in\mathbb{P^+_\varepsilon}$. We define an increasing sequence $\langle k^p_n \in \omega \such n\in \omega \rangle$ as follows:
\begin{align*}
& n=0 : \text{ We put }k^p_0:=  \min(A_p) ,\\
& n>0 : \text{ We let } k^p_{n} :=\min \{ k \in \omega \such ( k > k^p_{n-1}   \wedge |A_{p}\cap k |\geq k(\varepsilon - 2^{-n}) ) \} .
\end{align*}

Such a sequence has the following property for $n\in \omega$:
\[
\frac{|A_{p}\cap k^p_n|}{k^p_n}  \geq \varepsilon-2^{-n},
\] 
and therefore witnesses $d^+(A_p)\geq \varepsilon$.
\end{remark}
We use the sequences $\langle k^p_n \in \omega \such n\in \omega \rangle$ to define a \emph{stronger-$n$}-relation $\leq_n$ on $\mathbb{P^+_\varepsilon}$.
\begin{definition}\label{stronger_n}
We define a decreasing sequence of partial order relations $\langle \leq_n \such n\in \omega \rangle$ on $\mathbb{P}^+_\varepsilon$ as follows:
\[
q\leq_n p \Leftrightarrow q\leq p \wedge k^q_n=k^p_n \wedge A_q \cap k^q_n = A_p \cap k^p_n.
\]
\end{definition}
Observe that given two conditions satisfying $q\leq_n p$ we must have $k^q_i = k^p_i$ for $i\leq n$.
\begin{fact}\label{fusion1}
Let $\poset P \in \{\mathias,\silver \}$. Let $\langle q_n \such n\in \omega \rangle$ be a fusion sequence in $\mathbb{P}^+_\varepsilon$. Then, fusions exist. Especially, $q = \bigcap q_n$ is a condition in $\mathbb{P}^+_\varepsilon$.
\end{fact}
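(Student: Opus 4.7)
The plan is to set $A := \bigcap_n A_{q_n}$ and identify $q := \bigcap_n q_n$ with the uniformly splitting tree built from $A$ together with the appropriate auxiliary data (a stem in the Mathias case, a partial function in the Silver case).

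The crucial stability property, derived by iterating $q_{n+1}\le_n q_n$, is that for every $m\ge n$,
\[
k^{q_m}_n = k^{q_n}_n \qquad\text{and}\qquad A_{q_m}\cap k^{q_n}_n = A_{q_n}\cap k^{q_n}_n;
\]
this follows from Definition~\ref{stronger_n} and the routine fact that $\le_{n+1}$ refines $\le_n$, which in turn uses the recursive definition of $\langle k^p_j \such j<\omega\rangle$. Since stronger Silver or Mathias conditions have smaller $A$-components, it follows that $A\cap k^{q_n}_n = A_{q_n}\cap k^{q_n}_n$. Combined with Remark~\ref{sequence}, this gives $|A\cap k^{q_n}_n|\ge k^{q_n}_n(\varepsilon-2^{-n})$, so $\langle k^{q_n}_n\such n\in\omega\rangle$ witnesses $d^+(A)\ge\varepsilon$. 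The same identity lets one verify by induction on $n$ that $k^q_n = k^{q_n}_n$, delivering $q\le_n q_n$ for every $n$.

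It remains to recognise $\bigcap_n q_n$ as the uniformly splitting tree with $A_q=A$. In the Silver case, stability forces the partial functions $f_{q_n}$ to cohere: for each $i\in\omega\setminus A$ pick $m$ with $i\notin A_{q_m}$; then $f_{q_{m'}}(i)=f_{q_m}(i)$ for every $m'\ge m$, because $q_{m'}\le q_m$ entails $f_{q_{m'}}\supseteq f_{q_m}$. Thus $f_q := \bigcup_n f_{q_n}$ is a well-defined function $\omega\setminus A\to 2$, and an easy node-by-node comparison shows that the Silver tree it generates coincides with $\bigcap_n q_n$.

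The Mathias case requires a small observation that I see as the chief subtlety of the proof: a priori one might worry that the stems $s_{q_n}$ grow unboundedly in $n$, in which case $\bigcap_n q_n$ would have no finite stem. However $\min A_{q_n} = k^{q_n}_0 = k^{q_0}_0$ is constant in $n$, so $s_{q_n}\subseteq[0,k^{q_0}_0)$ for every $n$, while $s_{q_{n+1}}\setminus s_{q_n}\subseteq A_{q_n}\subseteq[k^{q_0}_0,\infty)$; the two bounds together force $s_{q_n}$ to be constant, and one reads off $\bigcap_n q_n = p(s_{q_0},A)$. This finishes the verification that $q\in\mathbb{P}^+_\varepsilon$.
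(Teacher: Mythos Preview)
Your argument is correct. The paper states this as a \emph{Fact} without proof, so there is no authors' argument to compare against; you have supplied the routine verification they omitted. The key points---stability of $k^{q_m}_n$ and of $A_{q_m}\cap k^{q_n}_n$ for $m\ge n$, yielding $A\cap k^{q_n}_n=A_{q_n}\cap k^{q_n}_n$ and hence $d^+(A)\ge\varepsilon$ and $k^q_n=k^{q_n}_n$---are exactly what is needed, and your handling of the Mathias stem (using $k^{q_n}_0=k^{q_0}_0$ to pin $s_{q_n}$ inside $[0,k^{q_0}_0)$ while $s_{q_{n+1}}\setminus s_{q_n}\subseteq A_{q_n}\subseteq[k^{q_0}_0,\infty)$) is the right observation to rule out stem growth.
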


\subsection{Silver forcing with upper density $\varepsilon>0$}
We quickly establish that the Silver forcing with positive upper density $\varepsilon$ has strong Axiom A and thus is a proper forcing that does not add unbounded reals. The proof is a straightforward generalization of the standard case.

\begin{lemma}\label{properandbounding}
The forcing $\silver^+_\varepsilon$ has strong Axiom A.
\end{lemma}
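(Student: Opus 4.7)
The strategy is to verify conditions (a), (b), (c) of strong Axiom~A for $\leq_n$ as in Definition~\ref{stronger_n}. Condition (a) follows directly from the definition, and (b) is Fact~\ref{fusion1}. For (c), since $\leq_{n+1}\subseteq\leq_n$, it suffices to produce, for each $p\in\silver^+_\varepsilon$, each maximal antichain $\cal A$, and each $n\geq 1$, a witness $q\leq_n p$ compatible with only finitely many elements of $\cal A$; the $n=0$ case is then obtained by re-using the witness for $n=1$, which also sidesteps the degeneracy $A_p\cap k^p_0=\emptyset$.

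Fix $p$, $n\geq 1$, and $\cal A$. The level $\Lev_{k^p_n}(p)$ consists of exactly $N:=2^{|A_p\cap k^p_n|}$ nodes; enumerate them as $t_0,\dots,t_{N-1}$. I build by induction a chain $p=p^{(0)}\geq_n p^{(1)}\geq_n\cdots\geq_n p^{(N)}$ and elements $a_i\in\cal A$ with $p^{(i+1)}\restric t_i\leq a_i$. At step $i$, the restriction $p^{(i)}\restric t_i$ is a condition of $\silver^+_\varepsilon$ because $A_{p^{(i)}\restric t_i}=A_{p^{(i)}}\setminus k^p_n$ has the same upper density as $A_{p^{(i)}}$. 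Maximality of $\cal A$ yields $a_i\in\cal A$ and a common lower bound $r\leq p^{(i)}\restric t_i,a_i$ in $\silver^+_\varepsilon$. I then define $p^{(i+1)}$ to be the Silver condition with splitting set $A_{p^{(i+1)}}:=(A_p\cap k^p_n)\cup A_r$ and fixed-value function equal to $f_p$ on $[0,k^p_n)\setminus A_p$ and to $f_r$ on $[k^p_n,\omega)\setminus A_r$. A direct unfolding of definitions shows $p^{(i+1)}\restric t_i=r$ and $p^{(i+1)}\leq p^{(i)}$.

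Taking $q:=p^{(N)}$ finishes the proof. If $b\in\cal A$ is compatible with $q$, any common extension $s\leq q,b$ contains some $t_i$ at level $k^p_n$, and then $s\restric t_i\leq q\restric t_i\leq a_i$ while $s\restric t_i\leq b$, so $a_i$ and $b$ are compatible and hence $b=a_i$; thus $q$ is compatible with at most $\{a_0,\dots,a_{N-1}\}$, a finite set. The main point to check at each step, and the only place where the density hypothesis is used, is that $p^{(i+1)}\in\silver^+_\varepsilon$ and $p^{(i+1)}\leq_n p^{(i)}$: since $A_r\subseteq[k^p_n,\omega)$ inherits upper density $\geq\varepsilon$ from $r\in\silver^+_\varepsilon$, the finite addition $A_p\cap k^p_n$ leaves the upper density unchanged, giving $d^+(A_{p^{(i+1)}})\geq\varepsilon$; and since $A_{p^{(i+1)}}\cap k^p_n=A_p\cap k^p_n$, the thresholds satisfy $k^{p^{(i+1)}}_m=k^p_m$ for every $m\leq n$, yielding $p^{(i+1)}\leq_n p^{(i)}$ as required.
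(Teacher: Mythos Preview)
Your proof is correct and follows essentially the same approach as the paper's: enumerate the nodes at level $k^p_n$, then successively strengthen the condition by ``copying in'' an extension below an antichain element above each such node. Your treatment is arguably a bit more careful than the paper's---you explicitly isolate the degenerate $n=0$ case (where $A_p\cap k^p_0=\emptyset$ and the construction could shift $k^q_0$) and you enumerate all nodes at level $k^p_n$ rather than only splitting nodes, which avoids a tacit assumption that $k^p_n\in A_p$---but the underlying argument is the same.
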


\begin{proof}
We take the partial order relations $\leq_n$ as defined in Definition \ref{stronger_n}. It is easy to see that the requirements (a) and (b) from Definition \ref{Axiom_A} are fulfilled. We make sure that the strong version of (c) can be fulfilled as well. So, fix an open dense set $D\subseteq \mathbb{V}^+_\varepsilon$, $n\in \omega$ and a condition $p\in \mathbb{V}^+_\varepsilon$. We have to find a finite set $E_p \subseteq D$ and  a condition $q\leq_n p$ such that $E_p$ is predense below $q$.\\
To this end, let $\langle k^p_m \such m<\omega \rangle$ be the sequence associated to the condition $p$ as given in Remark \ref{sequence}.  Let $k:= |A_{p} \cap k_n^{p} |$ denote the number of splitting levels up to $k_n^{p}$ and $\{ t_i \in \splitting (p)  \such i < 2^{k}\}$ enumerate all splitting nodes of length exactly $k_n^{p}$. We construct a decreasing sequence $p =: q_0 \geq q_1 \geq \dots \geq q_{2^{k}} =: q$ such that $q_{i+1} \restric t_i \in D ,i< 2^{k}$. Fix $i<2^k$. Now, pick $p_i \leq q_{i} \restric t_i$ in $D$ and copy $p_i$ into $q_i$ above level $k_n^{p}$, more precisely:
\[
q_{i+1} := \{s \in q_i \such (|s| \leq k_n^{p} \vee (|s| > k_n^{p} \wedge \exists t \in p_i (k_n^{p} \leq m < |s| \rightarrow s(m) = t(m) ) ) ) \}.
\]
It is clear that the resulting tree $q$ satisfies $q \restric t \in D$, whenever $t \in q$ is of length at least $k_n^{p}$. Moreover, since we have pruned the tree $q$ only above $k_n^{p}$ we also made sure that
\[
k^{p}_n = k^{q}_n \wedge A_{p} \cap k^{p}_n = A_{q} \cap k^{p}_n,
\]
especially $q\leq_n p$. For the finite set $E_p$ we can simply put $E_p:= \{q \restric t \such t\in \level_{k^p_n}(q) \}$. This completes the proof.
\end{proof}

\begin{corollary}
The forcing $\silver^+_\varepsilon$ is proper and $\omega^\omega$-bounding for each $\varepsilon\in (0,1]$.
\end{corollary}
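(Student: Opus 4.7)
The plan is to derive this corollary directly from Lemma~\ref{properandbounding}, invoking the two standard implications recorded in the paragraph after Definition~\ref{Axiom_A}: Axiom~A implies properness, and strong Axiom~A implies $\omega^\omega$-bounding (both from \cite{RoSh:470}). Since Lemma~\ref{properandbounding} already establishes strong Axiom~A for $\silver^+_\varepsilon$ with respect to the orderings $\leq_n$ of Definition~\ref{stronger_n}, the task reduces to running these standard arguments in our setting, using Fact~\ref{fusion1} to guarantee the existence of fusion limits in $\silver^+_\varepsilon$.

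For properness, I would fix a countable elementary submodel $M\prec H(\theta)$ containing $\silver^+_\varepsilon$ and $p$, enumerate the dense subsets of $\silver^+_\varepsilon$ that lie in $M$ as $\langle D_n \such n<\omega\rangle$, and inductively construct a fusion sequence $\langle p_n \in M \such n<\omega\rangle$ with $p_0=p$ and $p_{n+1}\leq_n p_n$. At stage $n$, clause~(c) of strong Axiom~A, executed inside $M$, delivers $p_{n+1}\leq_n p_n$ together with a \emph{finite} set $E_n\subseteq D_n$ predense below $p_{n+1}$; finiteness combined with $E_n\in M$ forces $E_n\subseteq M$. The fusion limit $q=\bigcap_n p_n \in \silver^+_\varepsilon$ given by Fact~\ref{fusion1} satisfies $q\leq p_{n+1}$ for every $n$, so each $E_n\subseteq D_n\cap M$ remains predense below $q$, which is precisely $(M,\silver^+_\varepsilon)$-genericity.

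For $\omega^\omega$-bounding, fix $p\in \silver^+_\varepsilon$ and a name $\dot f$ for an element of $\omega^\omega$. Let $D_n$ be the open dense set of conditions deciding $\dot f(n)$. Applying the strong form of clause~(c) recursively produces a fusion sequence $\langle p_n\rangle$ with finite predense sets $E_n\subseteq D_n$ below $p_{n+1}$; define a ground-model function by
\[
g(n):=\max\{\, k \such (\exists r\in E_n)\; r\Vdash \dot f(n)=k\,\}.
\]
The fusion limit $q$ then forces $\dot f(n)\leq g(n)$ for all $n$, since predensity of $E_n$ below $q$ means every extension of $q$ is compatible with some $r\in E_n$ deciding $\dot f(n)\leq g(n)$.

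There is no serious obstacle here: the combinatorial work has already been done in Lemma~\ref{properandbounding}. The only verification beyond citing \cite{RoSh:470} is that the intersection produced by Fact~\ref{fusion1} genuinely satisfies $q\leq_n p_n$ for every $n$, which follows immediately from Definition~\ref{stronger_n} once one observes that along a fusion sequence the values $k^{p_m}_m$ and the initial segments $A_{p_m}\cap k^{p_m}_m$ are frozen from stage $m$ onwards.
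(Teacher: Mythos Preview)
Your proposal is correct and follows the same route as the paper: the corollary is stated immediately after Lemma~\ref{properandbounding} with no explicit proof, relying on the remark after Definition~\ref{Axiom_A} that strong Axiom~A implies both properness and $\omega^\omega$-bounding (via \cite{RoSh:470}). You have simply unpacked those standard implications in detail, which is more than the paper does but entirely in the same spirit.
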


\subsection{Mathias forcing with upper density $\varepsilon>0$}
In the following, we investigate the differences between $\mathias^+_\varepsilon$ and the classical Mathias forcing $\mathias$ and Mathias forcing $\mathias\ideal (\ideal F)$ with respect to a filter $\ideal F$. The decisive difference to the classical forcing is that $\mathias^+_\varepsilon$ adds Cohens. From $\mathias(\ideal F)$ it already differs by the fact that the set $\{ A\subseteq \omega \such d^+(A) \geq \varepsilon \}$ is not closed under finite intersection and thus not a filter. Note that this does not yet imply that the two forcing are not forcing equivalent.
\begin{definition}
Let $\ideal F$ be a filter over $\omega$. The partial order $\mathias(\ideal F)$ consists of all $p\in \mathias$ such that the set of splitting levels $A_p$ is an element of the filter $\ideal F$. $\mathias(\ideal F)$ is ordered by inclusion. 
\end{definition}
Remember that a filter $\ideal F$ over $\omega$ is called \emph{Canjar}, if the corresponding Mathias forcing $\mathias (\ideal F)$ does not add dominating reals. See for instance \cite{canjar:mathias}, where Michael Canjar  constructed an ultrafilter $\ideal U$ under the assumption $\mathfrak{d}= \mathfrak{c}$ such that $\mathias(\ideal U)$ does not add dominating reals. 
The following lemma shows that $\mathias^+_\varepsilon$ cannot be equivalent to $\mathias(\ideal F)$, for $\ideal F$ Canjar.
\begin{lemma}\label{dominating}
$\mathias^+_\varepsilon$ adds dominating reals.
\end{lemma}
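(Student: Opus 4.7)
The plan is to exhibit a single name $\dot g\in V^{\mathias^{+}_{\varepsilon}}$ forced to dominate every ground model real. I will take $\dot g(n):=k_{n}^{\dot M}$, where $\dot M$ names the Mathias generic and $\langle k_{n}^{\dot M} : n<\omega\rangle$ is obtained by the recursion of Remark~\ref{sequence} with $\dot M$ in place of $A_p$. A short density argument (absorb $A\cap\ell$ into the stem of an extension) forces $d^{+}(\dot M)\ge\varepsilon$, so $\dot g$ is well defined. It will then be enough to prove that for every strictly increasing $f\in V\cap\omega^{\omega}$ and every condition $(s,A)$ there is an extension $(s,A^{*})\le(s,A)$ forcing $k_{n}^{\dot M}>f(n)$ for all sufficiently large $n$; genericity will then yield $\dot g\ge^{\ast}f$ in the extension.

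To construct $A^{*}$, I assume WLOG that $f(n)>2^{n+5}|s|$ for $n\ge N_{0}$. Set $\alpha_{n}:=\varepsilon-3\cdot 2^{-n-2}$, so that $\alpha_{n}$ lies strictly between the consecutive thresholds $\varepsilon-2^{-n}$ and $\varepsilon-2^{-n-1}$. Using $d^{+}(A)\ge\varepsilon$, I will pick recursively a sequence $\ell_{N_{0}}<\ell_{N_{0}+1}<\cdots$ with $f(n)<\ell_{n}<f(n+1)$, $\ell_{n+1}>2^{n+5}\ell_{n}$, and $|A\cap\ell_{n}|/\ell_{n}\ge\varepsilon-2^{-n-5}$. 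Then $A^{*}\subseteq A$ is defined by requiring $A^{*}\cap(\ell_{n-1},f(n)]=\emptyset$ and choosing on each $(f(n),\ell_{n}]$ just enough elements of $A\cap(f(n),\ell_{n}]$ to bring $|A^{*}\cap\ell_{n}|$ up to $\lceil\alpha_{n}\ell_{n}\rceil$ (there is room because $|A\cap(f(n),\ell_{n}]|\ge(\varepsilon-2^{-n-5})\ell_{n}-f(n)\ge\alpha_{n}\ell_{n}$, using $f(n)\le 2^{-n-5}\ell_{n}$). Since $|A^{*}\cap\ell_{n}|/\ell_{n}\to\varepsilon$, we have $d^{+}(A^{*})\ge\varepsilon$, so $(s,A^{*})\in\mathias^{+}_{\varepsilon}$.

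The verification that $(s,A^{*})$ forces $k_{n}^{\dot M}>f(n)$ splits into two halves whose interplay is the whole point. On the one hand, a standard Mathias density argument below $(s,A^{*})$ absorbs $A^{*}\cap\ell_{n-1}$ into the stem, forcing $|\dot M\cap\ell_{n-1}|/\ell_{n-1}\ge\alpha_{n-1}>\varepsilon-2^{-(n-1)}$, hence $k_{n-1}^{\dot M}\le\ell_{n-1}$. On the other hand, for $k\in(\ell_{n-1},f(n)]$ we have $A^{*}\cap(\ell_{n-1},k]=\emptyset$, so
\[
\frac{|\dot M\cap k|}{k}\;\le\;\frac{|s|}{k}+\frac{\alpha_{n-1}\ell_{n-1}}{k}\;<\;2^{-n-2}+\alpha_{n-1}\;=\;\varepsilon-5\cdot 2^{-n-2}\;<\;\varepsilon-2^{-n},
\]
using $k>f(n-1)>2^{n+4}|s|$ to bound $|s|/k<2^{-n-2}$. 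Thus the $n$-th density threshold is not reached on $(k_{n-1}^{\dot M},f(n)]\subseteq(\ell_{n-1},f(n)]$, so $k_{n}^{\dot M}>f(n)$.

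The hardest part is balancing the three competing requirements on $A^{*}$: global upper density at least $\varepsilon$; density strictly below the $n$-th threshold throughout the short interval $(\ell_{n-1},f(n)]$; and density above the $(n{-}1)$-th threshold by position $\ell_{n-1}$. The freedom to meet all three comes entirely from the huge gap $\ell_{n}-f(n)\gg f(n)$, which lets one pack all of $A^{*}$'s density-witnessing mass into the long intervals $(f(n),\ell_{n}]$ and keep the short intervals $(\ell_{n-1},f(n)]$ empty.
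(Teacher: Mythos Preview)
Your approach is the same as the paper's: define the dominating real as $n\mapsto k_n$ computed from the Mathias generic, and show that for any ground-model $f$ one can thin the infinite part of any condition so that these $k_n$'s outrun $f$. The paper states this tersely via two facts ((i) $p\Vdash\dot f(n)\ge k_n^p$ for all $n$, and (ii) given $g$ and $p$ there is $q\le p$ with $k_n^q\ge g(n)$ eventually), while you try to work directly with $k_n^{\dot M}$ and an explicitly constructed $A^*$.

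There is, however, a genuine gap in your verification, in the ``first half''. You claim that a density argument below $(s,A^*)$ forces $|\dot M\cap\ell_{n-1}|/\ell_{n-1}\ge\alpha_{n-1}$, hence $k_{n-1}^{\dot M}\le\ell_{n-1}$. But $(s,A^*)$ does \emph{not} force any lower bound on $|\dot M\cap\ell_{n-1}|$: the extension $(s,\,A^*\setminus\ell_{n-1})$ is still in $\mathias^+_\varepsilon$ and forces $\dot M\cap\ell_{n-1}=s$, so the density there is tiny. ``Absorbing $A^*\cap\ell_{n-1}$ into the stem'' yields one particular extension, not a dense set of them, and in any case treats only a single $n$. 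Worse, your conclusion actually uses the \emph{opposite} inequality: you write $(k_{n-1}^{\dot M},f(n)]\subseteq(\ell_{n-1},f(n)]$, which requires $k_{n-1}^{\dot M}\ge\ell_{n-1}$, not $\le$. So even granting the density claim, the two halves do not fit together. The underlying difficulty is that for $k\in(f(n-1),\ell_{n-1}]$ your construction does not control $|A^*\cap k|/k$ at all---you never specify \emph{which} elements of $A\cap(f(n-1),\ell_{n-1}]$ go into $A^*$---so you cannot rule out the $n$-th threshold being met there.

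The cleanest repair is the paper's route: work with $k_n^q$ for the \emph{condition} rather than $k_n^{\dot M}$ for the generic. Since $\dot M\subseteq s\cup A_q$, any $k$ at which $|\dot M\cap k|/k\ge\varepsilon-2^{-n}$ must (for $k$ large relative to $|s|$) also have $|A_q\cap k|/k$ near that threshold, which gives $k_n^{\dot M}\gtrsim k_n^q$. Then one simply builds $A_q\subseteq A$ so that the sequence $\langle k_n^q\rangle$ from Remark~\ref{sequence} eventually exceeds $f$; this is easy because $k_n^q$ depends only on $A_q$ and you may delay the density witnesses arbitrarily. No lower bound on $|\dot M\cap\ell|$ is needed.
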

\begin{proof}
Let $G$ be $\mathias^+_\varepsilon$-generic over $V$ and let $x_G = \bigcup\{ s \such \exists A ((s,A) \in G)\}$ denote the the $\mathias^+_\varepsilon$-generic real. Observe that $x_G$ has upper density $\varepsilon$ in the generic extension $V[G]$. Therefore, the following function is well defined  in $V[G]$:
\[
{f}(n):= \min\left\{k \such \frac{|x_G\cap k|}{k} \geq \varepsilon - 2^{-n} \right\}.
\]
Remember that in Remark \ref{sequence} we assigned to each condition $p\in \mathias^+_\varepsilon$ a sequence $\la k^p_n \in \omega \such n<\omega \ra$. That $f$ is dominating now follows from the following two facts:
\begin{enumerate}
\item[(i)] $\forall p\in \mathias^+_\varepsilon, p \force \forall n<\omega ( f (n) \geq k^p_n ) $,
\item[(ii)] $ \forall g\in \omega^\omega \forall p \in \mathias^+_\varepsilon \exists q\leq p, q\force \forall^\infty	n (k^q_n \geq g(n)) $.
\end{enumerate}
\end{proof}

There is a more general reason why $\mathias^+_\varepsilon$ and $\mathias(\ideal F)$ for any filter $\ideal F$ cannot be forcing equivalent. Since $\ideal F$ is a filter, any two conditions $p,q\in\mathias (\ideal F)$ with the same stem are compatible and in particular $\mathias^+_\varepsilon$ is $\sigma$-centered.

\begin{lemma}
$\mathias^+_\varepsilon$ does not satisfy the countable chain condition. 
\end{lemma}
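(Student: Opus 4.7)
The plan is to exhibit an antichain of size $\aleph_1$ in $\mathias^+_\varepsilon$ by exploiting the fact that the class $\{A \subseteq \omega : d^+(A) \geq \varepsilon\}$ is not closed under finite intersections: it admits uncountable subfamilies whose pairwise intersections are in fact finite. I will produce $\aleph_1$ many sets $A_\xi$ of upper density $1$ with finite pairwise intersections, so that the Mathias conditions $p_\xi := (\emptyset, A_\xi)$ have no common $\mathias^+_\varepsilon$-lower bound.

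First, fix in $\ZFC$ an almost disjoint family $\{X_\xi : \xi < \omega_1\} \subseteq [\omega]^\omega$, i.e.\ one with $X_\xi \cap X_\eta$ finite whenever $\xi \neq \eta$. Next, partition $\omega$ into the intervals $I_n := [n!, (n+1)!)$ and set $A_\xi := \bigcup_{n \in X_\xi} I_n$. For any $n \in X_\xi$ we have $|A_\xi \cap (n+1)!| \geq |I_n| = n\cdot n!$, so $|A_\xi \cap (n+1)!|/(n+1)! \geq n/(n+1)$; letting $n \to \infty$ along $X_\xi$ yields $d^+(A_\xi) = 1 \geq \varepsilon$, whence $p_\xi \in \mathias^+_\varepsilon$.

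It remains to show $p_\xi \perp p_\eta$ for $\xi \neq \eta$. Any common extension $(t,C) \leq p_\xi, p_\eta$ in $\mathias^+_\varepsilon$ would require, by Definition~\ref{erweitert} and the Mathias order, that $C \in [\omega]^\omega$, $C \subseteq A_\xi \cap A_\eta$, and $d^+(C) \geq \varepsilon > 0$. But since the $I_n$ are pairwise disjoint, $A_\xi \cap A_\eta = \bigcup_{n \in X_\xi \cap X_\eta} I_n$ is a finite union of finite intervals, hence finite; so no such infinite $C$ exists. Thus $\{p_\xi : \xi < \omega_1\}$ is an uncountable antichain, defeating ccc.

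There is no real obstacle: the factorial-growth partition is engineered so that hitting even a single interval $I_n$ already pushes the prefix density above $n/(n+1)$, automatically guaranteeing upper density $1$ regardless of which infinite $X_\xi$ one selects. An alternative route (and indeed a strengthening) goes via the non-submeasure content of Proposition~\ref{continuum_antichains} to obtain an antichain of size $2^\omega$, but for mere failure of ccc the construction above is enough.
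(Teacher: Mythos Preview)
Your proof is correct and follows essentially the same approach as the paper: both inflate an almost disjoint family through fast-growing intervals (you use $I_n = [n!,(n+1)!)$, the paper uses $[g(i),g(i+1))$ with $g(i+1)=2^{g(i)}$) to obtain an almost disjoint family of sets of upper density~$1$, and hence an antichain of conditions with empty stem. The paper starts from an almost disjoint family of size $\mathfrak{c}$ and so directly gets an antichain of size continuum, while you stop at $\omega_1$; note, however, that your closing pointer to Proposition~\ref{continuum_antichains} is off, since that result only yields $d^-(A_f\cap A_g)=0$ and says nothing about $d^+(A_f\cap A_g)$, so it does not by itself produce an antichain in $\mathias^+_\varepsilon$.
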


\begin{proof}
Without loss of generality let $\varepsilon= 1$. We construct an injective function $f : 2^{\omega} \rightarrow [\omega]^\omega$ such that each $x\in 2^{\omega}$ with $|x^{-1}(1)|=\omega $ get mapped to a set with upper density $1$. To this end we define $f$ together with an auxiliary function $g:\omega \rightarrow \omega$ 
\begin{align*}
g(0) &:= 0 \\
g(i+1) &:= 2^{g(i)}\\ 
n \in f(x) & : \Leftrightarrow \text{ there is } i\in \omega , x(i)=1 \text{ and } n\in [g(i),g(i+1)) 
\end{align*}
Now, choose an almost disjoint family $\{A_\alpha \in[\omega]^\omega  \such \alpha<\mathfrak{c}\}$ of size continuum and let $x_\alpha$ denote the corresponding characteristic function i.e., $x_\alpha(i)=1 $ iff $i\in A_\alpha$. Then, $\{f(x_\alpha) \such \alpha<\mathfrak{c}\}$ is an almost disjoint family consisting of sets of upper density $1$. In particular, the conditions $\{(\la\ra , f(x_\alpha)) \such \alpha<\mathfrak{c}  \}$ form an antichain in $\mathias^+_1$ of size the continuum.
\end{proof}

Now we turn our attention to a comparison with the classical Mathias forcing $\mathias$. Usually the first step in showing that a given forcing satisfies \emph{pure decision} (compare \cite[Lemma 7.4.5]{BJ}) is to show that the forcing satisfies \emph{quasi pure decision} (compare \cite[Lemma 7.4.6]{BJ}). We will see that although $\mathias^+_\varepsilon$ fails to satisfy pure decision it still satisfies quasi pure decision.

\begin{lemma}\label{quasi_pure_decision}
$\mathias^+_\varepsilon$ satisfies quasi pure decision i.e., given a condition $p=(s,A)\in \mathias^+_\varepsilon$ and an open dense set $D\subseteq \mathias^+_\varepsilon$ there is $B\subseteq A$ such that the following holds:
\[
\text{If there is } (t,C)\leq (s,A) \text{ and } (t,C)\in D, \text{ then } (t,B\setminus (\max(t)+1))\in D.
\]
\end{lemma}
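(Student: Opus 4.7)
My plan is to adapt the standard quasi-pure-decision argument for Mathias forcing to $\mathias^+_\eps$, with added care to preserve upper density at least $\eps$ through the diagonal. First I would enumerate all finite $t\supseteq s$ with $t\setminus s\subseteq A$ as $\langle t_i\such i<\omega\rangle$, letting $m_i:=\max(t_i)+1$, and then recursively build a decreasing chain $A=A_0\supseteq A_1\supseteq\cdots$ of sets with $d^+(A_i)\geq\eps$ as follows. At stage $i$, ask whether some $C\subseteq A_i\setminus m_i$ has $d^+(C)\geq\eps$ and $(t_i,C)\in D$; if yes, fix such a $C_i$ and set $A_{i+1}:=(A_i\cap[0,m_i))\cup C_i$, otherwise $A_{i+1}:=A_i$. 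In both cases $d^+(A_{i+1})\geq\eps$, and in the yes branch $A_{i+1}\cap[m_i,\infty)\subseteq C_i$.

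Next I would assemble the diagonal. Pick levels $l_{-1}:=0<l_0<l_1<\cdots$ with $|A_{i+1}\cap l_i|/l_i\geq\eps-2^{-i}$; by adaptively reordering the enumeration of the $t_i$'s (deferring any $t_j$ with $m_j$ too small until a suitable witness becomes available) one may arrange $l_i\leq m_{i+1}$ throughout. Define $B$ piecewise by $B\cap[l_{i-1},l_i):=A_i\cap[l_{i-1},l_i)$. Monotonicity of the chain gives $B\cap l_i\supseteq A_{i+1}\cap l_i$, whence $d^+(B)\geq\eps$. Dually, for $j>i$ we have $B\cap[l_{j-1},l_j)\subseteq A_j\subseteq A_{i+1}$, so $B\cap[l_i,\infty)\subseteq A_{i+1}$; combined with $l_i\leq m_{i+1}$ and $A_{i+1}\cap[m_i,\infty)\subseteq C_i$, this yields $B\setminus m_i\subseteq C_i$ for every yes-branch $i$. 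To conclude: if $(t_i,C)\leq(s,A)$ and $(t_i,C)\in D$, a short density-preservation argument tracking how $A_i$ is obtained from $A$ shows $C\cap A_i$ still has $d^+\geq\eps$, and by openness $(t_i,C\cap A_i)\in D$; hence the yes branch was taken at stage $i$, and $(t_i,B\setminus m_i)\leq(t_i,C_i)\in D$, so openness yields $(t_i,B\setminus m_i)\in D$.

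The main obstacle is the coupling of two constraints on the diagonal $B$: it must have $d^+(B)\geq\eps$ and it must satisfy $B\setminus m_i\subseteq C_i$ for every yes-branch $i$. Since upper density is not preserved under countable intersection, one cannot simply intersect the $C_i$'s; instead one must exploit the $\limsup$ nature of $d^+$ by placing each density witness $l_i$ for $A_{i+1}$ below the next constraint $m_{i+1}$, which in turn forces the adaptive reordering of $\langle t_i\rangle$. A secondary subtlety is the implication \emph{``yes for $A$'' $\Rightarrow$ ``yes for $A_i$''}, which requires checking that the refinement $A\mapsto A_i$ does not destroy the density of potential witnesses $C\subseteq A$; this follows because each refinement step only grafts in a $d^+\geq\eps$ set above a finite threshold, so simple density bookkeeping suffices.
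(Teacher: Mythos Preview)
Your overall plan---build a decreasing chain $A=A_0\supseteq A_1\supseteq\cdots$ and diagonalize to a $B$ of upper density $\geq\eps$---is the right shape, but two of the steps do not go through.

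\textbf{The reordering fails.} You need $l_{i-1}\leq m_i$ (equivalently, your $l_i\leq m_{i+1}$) for every $i\geq 1$ while still processing \emph{every} finite $t\supseteq s$ with $t\setminus s\subseteq A$. But once the first density witness $l_0$ is fixed, every later $t_i$ must satisfy $m_i\geq l_{i-1}\geq l_0$; hence at most one stem with $\max(t)+1\leq l_0$ can ever be placed (namely at stage $0$). Since there are in general many such stems (all finite subsets of $s\cup(A\cap l_0)$), your enumeration simply misses them; ``deferring'' does not help, because the thresholds $l_i$ only grow. This is exactly the coupling obstacle you identify, and the one-$t$-per-stage scheme cannot resolve it.

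\textbf{The density-preservation step is false.} You assert that if $(t_i,C)\leq(s,A)$ lies in $D$ then $d^+(C\cap A_i)\geq\eps$, because each refinement ``only grafts in a $d^+\geq\eps$ set above a finite threshold''. But grafting in one such set can annihilate another: with $\eps=\tfrac12$, $A=\omega$, take $C_0=$ the even numbers at stage $0$, so $A_1$ is (cofinitely) the evens; now $C=$ the odds has $d^+(C)=\tfrac12$ yet $C\cap A_1$ is finite. So from ``some $C\subseteq A$ works'' you cannot infer that the yes-branch fires at stage $i$, and the conclusion $(t_i,B\setminus m_i)\in D$ is left unproved.

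The paper sidesteps both issues by interleaving rather than reordering: it builds $B=\bigcup_n b_n$ from finite blocks, and at stage $n$ processes \emph{all} of the finitely many $t$'s contained in $\bigcup_{m\leq n}b_m$ before choosing the next block $b_{n+1}\subseteq B_{n+1}$ large enough to witness density $\geq\eps-2^{-(n+1)}$. This way every relevant stem is eventually (and repeatedly) handled, the Case~1 test is always posed relative to the current tail $B^j_{n+1}$, and the tail of $B$ past the committed blocks automatically sits inside any witness chosen---so no a posteriori reordering or cross-density argument is needed.
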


\begin{proof}
The proof is a straightforward generalization of \cite[Lemma 7.4.5.]{BJ}. To make sure that the final set $B$ has upper density $\geq \varepsilon$ we have to use finite sets $b_n$ instead of singletons.\\
We construct $B=\bigcup\{b_n \in [\omega]^{<\omega}\such n\in \omega \}$ together with a decreasing sequence $\{B_n \subseteq A \such n\in \omega\}$. We start with $B_0 = A$ and $b_0 = \min(A)$. So, assume we have constructed all sets up to $ B_n$ and $b_n$. Let $\{t_0, \dots ,t_{k-1}\}$ enumerate all subsets of $\bigcup\{b_0,\dots ,b_n\}$. We construct $B_{n+1}$ as a decreasing sequence $B_n =: B^0_{n+1} \supseteq \dots \supseteq B^k_{n+1}=:B_{n+1}$. Let $j<k$ be given.
\begin{enumerate}
\item[Case 1:] There exists $C\subseteq B^j_{n+1}\setminus (\max(t_j)+1)$ such that $(s\cup t_j, C) \in D$. Then put $B^{j+1}_{n+1}:= C$.
\item[Case 2:] Otherwise put $B^{j+1}_{n+1}:= B^{j}_{n+1}$.
\end{enumerate}
Finally put $B_{n+1}:= B^k_{n+1}$. Since $B_{n+1}$ has upper density $\geq \varepsilon$ we can find  $k_{n+1}\in \omega$ such that
\[
\frac{|(\bigcup_{j\leq n} b_j \cup B_{n+1}) \cap k_{n+1}|}{k_{n+1}} \geq \varepsilon-2^{-n-1}
\]
and set $b_{n+1}:=  B_{n+1} \cap k_{n+1}$.
\end{proof} 

\begin{corollary}
The forcing $\mathias^+_\varepsilon$ has Axiom A.
\end{corollary}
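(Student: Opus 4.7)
The plan is to verify the three clauses of Definition \ref{Axiom_A} with the partial orders $\leq_n$ from Definition \ref{stronger_n}. Clauses (a) and (b) will be immediate: if $q \leq_{n+1} p$ then the identity $A_q \cap k^p_{n+1} = A_p \cap k^p_{n+1}$ forces $k^q_i = k^p_i$ for every $i \leq n$ through the iterative definition in Remark \ref{sequence}; and (b) follows from Fact \ref{fusion1}, since the intersection $q = \bigcap_m q_m$ of a fusion sequence lies in $\mathias^+_\varepsilon$, shares its stem with every $q_m$, and satisfies $A_q \cap k^{q_n}_n = A_{q_n} \cap k^{q_n}_n$ by stabilisation, so $q \leq_n q_n$ for each $n$.

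The substantive content is clause (c). Fix $p = (s, A) \in \mathias^+_\varepsilon$, $n < \omega$ and an open dense set $D \subseteq \mathias^+_\varepsilon$. The plan is to invoke Lemma \ref{quasi_pure_decision} (quasi pure decision) iteratively, once for each subset of the finite set $A \cap k^p_n$. Enumerate these subsets as $u^0, \dots, u^{m-1}$ with $m = 2^{|A \cap k^p_n|}$, and build a decreasing chain $A \setminus k^p_n = A_0 \supseteq A_1 \supseteq \dots \supseteq A_m$ of sets of upper density $\geq \varepsilon$, where $A_{i+1}$ is the output of Lemma \ref{quasi_pure_decision} applied to the condition $(s \cup u^i, A_i)$ and $D$. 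Finally set $B := A_m \cup (A \cap k^p_n)$ and $q := (s, B)$.

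The inequality $q \leq_n p$ is then routine since $B \cap k^p_n = A \cap k^p_n$ and $d^+(B) \geq d^+(A_m) \geq \varepsilon$. For a countable subset of $D$ predense below $q$, the candidate is
\[
E := \{(s \cup u, B \setminus (\max(u)+1)) : u \subseteq B \text{ finite}, \ u \cap A_m \neq \emptyset\}.
\]
To verify $E \subseteq D$, decompose $u = u^j \cup u_2$ with $u_2 \subseteq A_m$ nonempty; since $\max(u) \geq k^p_n$, the promise $B \setminus (\max(u)+1)$ collapses to $A_m \setminus (\max(u)+1) \subseteq A_{j+1} \setminus (\max(u)+1)$, and the decision property from stage $j+1$, applied to a $D$-extension of $(s \cup u, A_m \setminus (\max(u)+1))$ granted by density, combined with the openness of $D$, yields the membership. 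For predensity below $q$, given $q' = (s \cup u, B') \leq q$ with $u \cap A_m = \emptyset$, first strengthen $q'$ by appending $\min(B' \cap A_m)$ --- which exists because $B'$ is infinite and $B' \setminus k^p_n \subseteq A_m$ --- thereby reducing to the case $u \cap A_m \neq \emptyset$, in which the element of $E$ with the same stem is directly compatible with $q'$.

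The main obstacle is reconciling Lemma \ref{quasi_pure_decision}, which shrinks $A$ into some $B$ with no control over $B \cap k^p_n$, with the constraint $A_q \cap k^p_n = A_p \cap k^p_n$ built into $q \leq_n p$. The iterative application above is what resolves this tension: the final tail $A_m$ is independent of the subset of $A \cap k^p_n$ used to augment the stem, yet inherits at each stage the decision property for extensions using that subset, so welding $A_m$ back onto $A \cap k^p_n$ produces a single $B$ that simultaneously fulfils the density requirement and the decision property underlying predensity.
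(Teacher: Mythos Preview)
Your overall architecture matches the paper's: iterate Lemma~\ref{quasi_pure_decision} once for each subset of $A\cap k^p_n$, weld the resulting tail $A_m$ back onto $A\cap k^p_n$, and take a countable family of conditions with second coordinate a tail of $B$. The gap is in your choice of $E$ and the claim $E\subseteq D$.

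Your verification does not establish $(s\cup u,\,B\setminus(\max(u)+1))\in D$. When you pick a $D$-extension $(t,C)\leq(s\cup u,\,A_m\setminus(\max(u)+1))$ and apply the decision property at stage $j$, you obtain $(t,\,A_{j+1}\setminus(\max(t)+1))\in D$; openness then yields $(t,\,A_m\setminus(\max(t)+1))\in D$. But the stem here is $t$, which in general \emph{properly} extends $s\cup u$, so this is a different (stronger) condition than the element of $E$ you started with, and openness goes the wrong way to recover it. Concretely, if $D=\{(t,C):|t|\geq 5\}$, then any $(s\cup u,\,B\setminus(\max(u)+1))$ with $|s\cup u|<5$ lies in your $E$ but not in $D$, so $E\not\subseteq D$.

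The repair is exactly what the paper does: restrict $E$ to those stems $t=s\cup u$ for which some $(t,C)\leq(s,B)$ with $(t,C)\in D$ actually exists, i.e.\ set
\[
E_p:=\bigl\{(t,\,B\setminus(\max(t)+1)):\exists C\in[\omega]^\omega\ \bigl((t,C)\leq(s,B)\wedge(t,C)\in D\bigr)\bigr\}.
\]
Now the decision property applies directly with stem $t$ (no further extension needed), giving $(t,\,B_{j+1}\setminus(\max(t)+1))\in D$ and hence, by openness, $(t,\,B\setminus(\max(t)+1))\in D$ once $\max(t)\geq k^p_n$. Predensity below $(s,B)$ is unchanged: given $q'\leq(s,B)$, pass to a $D$-extension $(t,C)$, and then $(t,\,B\setminus(\max(t)+1))\in E_p$ is compatible with $q'$ via $(t,C)$ itself. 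Your own predensity argument already contains the reduction ensuring the stem reaches into $A_m$, so everything else you wrote survives.
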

\begin{proof}
We take the partial order relations $\leq_n$ as defined in Definition \ref{stronger_n} and also recall the sequences $\langle k^p_n \such n<\omega \rangle$ from Remark \ref{sequence}. The crucial part is to make sure that the requirement (c) from Axiom A (Definition \ref{Axiom_A}) is fulfilled. So, fix an open dense set $D \subseteq \mathias^+_\varepsilon, n< \omega $ and a condition $p=(s,A_p)\in \mathias^+_\varepsilon$. Let $N$ be big enough and $\{ t_i \in [\omega]^{<\omega} \such i<N \}$ enumerate all subsets of $A_p \cap k^p_n$. We define a decreasing sequence $A_p \supseteq  B_0 \supseteq B_1 \supseteq , \dots , \supseteq B_N$ such that for $i<N$:
\begin{align*}
\forall (t,C)\in D \big( (t,C)\leq (s\cup t_i , B_i) \rightarrow (t, B_{i+1}\setminus(\max (t)+1))\in D \big).
\end{align*}
We start with $B_0 := A_p \setminus (\max(t_0)+1)$. To get the sets $B_i$ for $i>0$, simply apply Lemma \ref{quasi_pure_decision} to the condition $(s\cup t_{i-1},B_{i-1})$ and the open dense set $D$.\\
Finally, we set $B := (A\cap k^p_n) \cup B_N$ and \\
$E_p:= \{(t,B\setminus (\max(t)+1)) \such \exists C\in[\omega]^\omega ((t,C)\leq (s,B) \wedge (t,C)\in D) \}$. Then, $(s,B) \leq_n (s,A)$ and  $E_p$ is a countable predense set below $(s,B)$. 
\end{proof}

\begin{proposition}\label{Cohen}
$\mathias^+_\varepsilon$ adds a Cohen real.
\end{proposition}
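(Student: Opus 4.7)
The plan is to exhibit a $\mathias^+_\varepsilon$-name $\dot c \in 2^\omega$ that is forced to be a Cohen real over $V$, by reading off Cohen bits from the distribution of the generic $\dot x_G$ across a ground-model interval partition. The intuition is that, compared to classical Mathias forcing with a filter, the upper-density constraint $d^+(A) \geq \varepsilon$ is so weak that $A$ can be re-thinned in many ways, and this extra freedom translates into ``almost random'' local control over $\dot x_G$.

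Concretely, I would fix in $V$ a partition $\omega = \bigsqcup_n I_n$ with $I_n = [n^2, (n+1)^2)$, and define
\[
\dot c(n) := |\dot x_G \cap I_n| \bmod 2.
\]
The first step is a counting lemma: if $d^+(A) \geq \varepsilon$, then $|A \cap I_n| \geq 2$ for infinitely many $n$. Indeed, if $|A \cap I_n| \leq 1$ for all sufficiently large $n$, then $|A \cap N^2| \leq N + O(1)$, forcing $|A \cap N^2|/N^2 \to 0$ and contradicting $d^+(A) \geq \varepsilon$. For such ``active'' indices $n$, the parity of $|\dot x_G \cap I_n|$ can be flipped in any extension by toggling the inclusion of one element of $A \cap I_n$ into the stem; since this removes only finitely many elements from $A$, the upper-density constraint is preserved.

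To verify that $\dot c$ is Cohen, I would then argue as follows: given a condition $(s,A) \in \mathias^+_\varepsilon$ and a ground-model dense open set $D \subseteq 2^{<\omega}$, use a fusion along the active indices, combined with the quasi pure decision Lemma~\ref{quasi_pure_decision}, to construct an extension $(s',A') \leq (s,A)$ and a node $\tau \in D$ such that $(s',A') \Vdash \dot c \restric |\tau| = \tau$.

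The main obstacle is that for an ``inactive'' index $n$ with $|A \cap I_n| \leq 1$, the parity $\dot c(n)$ is essentially decided by the stem $s$, so we cannot freely match target parities at every index. I would address this either by reinterpreting $\dot c$ as a subindexed sequence living only on the active levels (which is sufficient for Cohen-genericity, since those levels form an infinite set readable from $\dot x_G$), or by a dense preliminary reduction to a condition $(s,A^*)$ whose $A^*$ has a prescribed ``at-least-two-per-interval'' block structure on a tail, using the witnessing sequence of Remark~\ref{sequence} to preserve $d^+(A^*) \geq \varepsilon$. Carrying out either workaround cleanly is the technical heart of the argument.
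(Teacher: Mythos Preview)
Your instinct—read Cohen bits from how $\dot x_G$ distributes across a fixed ground-model partition—is right, and your first workaround (re-indexing along the ``active'' levels, where ``active'' is read off $\dot x_G$ itself) can be made to work. One small repair: with your definition of active as $|\dot x_G\cap I_n|\ge 2$, you need infinitely many $n$ with $|A\cap I_n|\ge 3$ so that both parities are achievable while keeping the level active; this follows by the same counting argument you gave. The appeal to Lemma~\ref{quasi_pure_decision} is unnecessary—once you can control the next bit of $\dot c$ from any condition, a direct finite iteration hits any target $\tau\in D$.

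Your second workaround, however, fails outright. You cannot densely pass to $A^*\subseteq A$ with $|A^*\cap I_n|\ge 2$ on a tail, because $A$ itself may satisfy $A\cap I_n=\emptyset$ for infinitely many $n$ (take $A$ to be a union of long blocks separated by much longer gaps; this still has $d^+(A)\ge\varepsilon$). No shrinking of $A$ repairs an empty interval.

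The paper's proof sidesteps the inactive-level problem with a different and cleaner coding. Fix $N$ with $1/N<\varepsilon$ and partition $\omega$ into $N{+}1$ sets $a_0,\dots,a_N$ each of density $1/(N{+}1)$. Enumerate $\dot x_G^{-1}\{1\}$ as $\langle n_k:k<\omega\rangle$ and set $\dot c(k)=0$ iff $\{n_{2k},n_{2k+1}\}\subseteq a_i$ for some $i$. The point is that any $A$ with $d^+(A)\ge\varepsilon>1/(N{+}1)$ meets at least two of the $a_i$ infinitely often, so from any condition one can place the next two stem elements either in the same $a_i$ or in different ones. Thus \emph{every} bit of $\dot c$ is directly controllable—no re-indexing, no inactive levels, no fusion.
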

\begin{proof}
Take $N \in \omega$ such that $1/N < \varepsilon$. Divide $\omega$ into $N+1$ disjoint sets $a_i\subseteq \omega,i<N+1$ of density $1/(N+1)$ i.e., $d^+(a_i)=d^-(a_i)=1/(N+1),i<(N+1).$ Then each set $A\subseteq \omega$ with upper density $\geq 1/N$ cannot be completely contained in a single set $a_i$. Furthermore it must intersect at least two of them infinitely often.  Let $x_{G}$ be the canonical name for the $\mathias^+_\varepsilon$-generic real and $\langle n_k \such k \in \omega \rangle$ enumerate all integers $n$ such that $x_G(n) = 1$. We define a $\mathias^+_\varepsilon$-name $\dot{c}$ via:
\[
\dot{c}(k) := 
\begin{cases}
0 , &\exists i<(N+1) \{n_{2k} , n_{2k+1} \} \subseteq  a_i \\
1 , & \text{ else.}
\end{cases}
\]
We claim that $\dot{c}$ is Cohen. For this purpose, fix $t\in 2^{<\omega},(s,A)\in \mathias ^+_\varepsilon$. W.l.o.g. we can assume that  $|s^{-1}(\{1\})|$ is even. Let $r \in 2^{<\omega}$ maximal such that $(s,A)\Vdash r \trianglelefteq\dot{c}$. We have to find $(s',A')\leq (s,A)$ with the property $(s',A')\Vdash r^\conc t \trianglelefteq\dot{c}$. We construct $(s',A')$ as a decreasing sequence $(s,A) =: (s_0,A_0) \geq (s_1,A_1) \geq \dots \geq (s_{|t|},A_{|t|}) = (s',A')$ such that $|{s_i}^{-1}(\{1\})|+2 = |{s_{i+1}}^{-1}(\{1\})|,i<|t|$ and $(s_i, A_i) \Vdash r^\conc t\restric i \trianglelefteq \dot{c} $. We only carry out the first step of the construction. Take $i<(N+1)$ such that $A_0 \cap a_i$ is infinite. Put $m := \min (A_0 \cap a_{i})$. There are two cases:

\begin{align*}
t(0)=0: &\text{ Define } M := \min ( (A_0  \cap a_{i})\setminus(m + 1) ) \text{ and put } s_1 := {s_0} ^\conc m ^\conc M ,\\
& A_1:= A_0 \setminus (M + 1). \text{ Then }(s_1, A_1)\Vdash r^\conc t(0) \trianglelefteq\dot{c}.\\ 
t(0)=1: &\text{ Define } M := \min(A_0 \setminus((m + 1) \cup a_{i}))\text{ and put } s_1 := {s_0} ^\conc m ^\conc M,\\
& A_1 := A_0 \setminus (M + 1). \text{ Then }(s_1, A_1)\Vdash r^\conc t(0) \trianglelefteq\dot{c}. 
\end{align*}

The rest of the construction is carried out analogously.

\end{proof}
Thus, in contrast to $\mathias$ we get.
\begin{corollary}
$\mathias^+_\varepsilon$ does not satisfy pure decision.
\end{corollary}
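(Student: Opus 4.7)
The plan is to derive the corollary by contraposition from Proposition~\ref{Cohen}: pure decision for $\mathias^+_\varepsilon$ would preclude adding a Cohen real, whereas Proposition~\ref{Cohen} exhibits one explicitly. In the current setting, pure decision would read: for every condition $(s,A)\in \mathias^+_\varepsilon$ and every sentence $\phi$ of the forcing language, there exists $B\subseteq A$ with $d^+(B)\geq \varepsilon$ such that $(s,B)$ decides $\phi$. Assume this; I will reach a contradiction with the Cohen name $\dot{c}$ built in the proof of Proposition~\ref{Cohen}.

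Given a name $\dot{c}\in V^{\mathias^+_\varepsilon}$ for an element of $2^\omega$ and a condition $(s,A)$, I would iterate pure decision bit by bit on the sentences ``$\dot{c}(n)=0$'', relative to every possible finite extension of the stem by elements already committed to the final reservoir, in the style of Lemma~\ref{quasi_pure_decision}. This is the standard Mathias pure-decision machinery (cf.\ \cite[Lemma~7.4.6]{BJ}): combining pure decision with the quasi pure decision established in Lemma~\ref{quasi_pure_decision}, one produces a condition $(s,B)\le (s,A)$ such that $\dot{c}$ is read off the generic in a ground-model-predictable way, in particular lies in a narrow ground-model slalom. Hence $\dot{c}$ is forced to avoid some meager $F_\sigma$ set coded in $V$, so it cannot be a Cohen name over $V$. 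Applied to the specific Cohen name from Proposition~\ref{Cohen}, this is the sought contradiction.

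The main obstacle, as in Lemma~\ref{quasi_pure_decision}, is keeping the upper density at least $\varepsilon$ through the countable iteration: a naive descending chain $A\supseteq A_0\supseteq A_1\supseteq\cdots$ of sets of upper density $\geq \varepsilon$ generally has intersection of density zero, which would destroy the fusion. The remedy is the same finite-chunk technique used in Lemma~\ref{quasi_pure_decision}: at stage $n$ one freezes a finite initial piece $b_n\subseteq A_n$ of the eventual reservoir together with a witness $k_n$ so that $|(\bigcup_{i\le n}b_i)\cap k_n|/k_n\geq \varepsilon-2^{-n}$, and sets $B=\bigcup_n b_n$; this $B$ still witnesses $d^+(B)\geq \varepsilon$ and lies in $A$. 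Once this density bookkeeping is in place, the rest of the argument is a direct transcription of the classical Mathias pure-decision proof, yielding the Laver-type slalom conclusion on $\dot{c}$ and thus the contradiction.
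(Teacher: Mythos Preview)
Your instinct---that pure decision would conflict with Proposition~\ref{Cohen}---is exactly the paper's reasoning, and the corollary is stated there with no further argument. But the route you sketch is far more elaborate than needed, and the crucial step is not justified: you claim the iterated pure-decision machinery yields a ``narrow ground-model slalom'' capturing $\dot c$, hence a meager $F_\sigma$ set it avoids. For a name taking values in $2^\omega$, a slalom of width $\geq 2$ at each coordinate is vacuous, so you would need to pass to block restrictions $\dot c\!\restriction\! I_n$ and argue that the number of possible values grows slower than $2^{|I_n|}$. That is essentially proving a Laver-type property for $\mathias^+_\varepsilon$ from pure decision, which you have not done; the fusion bookkeeping you describe controls the density of the reservoir but says nothing about the \emph{width} of the decision tree above each stem, and that width is what feeds the slalom bound.

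None of this is necessary. A single application of pure decision to the specific name $\dot c$ of Proposition~\ref{Cohen} already gives a contradiction. Take $(s,A)$ with $|s^{-1}\{1\}|=2m$, so $\dot c\restriction m$ is decided but $\dot c(m)$ is not. If pure decision held, some $(s,B)\leq(s,A)$ with $d^+(B)\geq\varepsilon$ would decide $\dot c(m)$. For any $b_0<b_1$ in $B$, the extension $(s\cup\{b_0,b_1\},\,B\setminus(b_1{+}1))$ forces $n_{2m}=b_0$, $n_{2m+1}=b_1$. Hence if $(s,B)\Vdash\dot c(m)=0$ then every pair from $B$ lies in a common $a_i$, forcing $B\subseteq a_i$ and $d^+(B)\leq 1/(N{+}1)<\varepsilon$; if $(s,B)\Vdash\dot c(m)=1$ then no two elements of $B$ share an $a_i$, so $|B|\leq N{+}1$. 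Either way, contradiction. This is the content behind the paper's one-line deduction.
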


\section{Lower density $\geq \varepsilon$}
\label{S3}

In this section we investigate the Mathias forcing with lower density $\geq \varepsilon$. The first observation is.
\begin{observation}
The family of sets $\ideal F_1 :=\{A\subseteq \omega \such d^- (A)=1 \}$ is 
a  filter. 
\end{observation}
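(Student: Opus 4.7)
The plan is to verify the three defining properties of a proper filter: that $\omega \in \mathcal{F}_1$ while $\emptyset \notin \mathcal{F}_1$, closure under supersets, and closure under finite intersection. The first is immediate since $d^-(\omega) = 1$ and $d^-(\emptyset) = 0$. Closure under supersets is also straightforward: if $A \subseteq B$, then $|A \cap n| \leq |B \cap n|$ for every $n$, so $d^-(A) \leq d^-(B)$; in particular $d^-(A) = 1$ forces $d^-(B) = 1$.

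The main (but still mild) step is closure under finite intersection. Given $A, B \subseteq \omega$ with $d^-(A) = d^-(B) = 1$, I would exploit the inclusion-exclusion identity
\[
|A \cap B \cap n| \;=\; |A \cap n| + |B \cap n| - |(A \cup B) \cap n|.
\]
Dividing by $n$ and using the trivial bound $|(A \cup B) \cap n|/n \leq 1$, this yields
\[
\frac{|A \cap B \cap n|}{n} \;\geq\; \frac{|A \cap n|}{n} + \frac{|B \cap n|}{n} - 1.
\]
Applying $\liminf$ on both sides, together with the superadditivity of $\liminf$, gives
\[
d^-(A \cap B) \;\geq\; d^-(A) + d^-(B) - 1 \;=\; 1,
\]
and since $d^-$ is bounded above by $1$ we conclude $d^-(A \cap B) = 1$, i.e.\ $A \cap B \in \mathcal{F}_1$.

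No real obstacle is expected; the argument is just a bookkeeping exercise with $\liminf$, and it incidentally shows the sharper statement that $d^-$ is \emph{superadditive modulo $1$}, i.e.\ $d^-(A \cap B) \geq d^-(A) + d^-(B) - 1$, from which the filter property of $\mathcal{F}_1$ drops out as the extremal case.
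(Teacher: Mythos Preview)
Your proof is correct; the inclusion--exclusion bound $d^-(A\cap B)\ge d^-(A)+d^-(B)-1$ is exactly the right tool, and the filter axioms drop out immediately. The paper itself treats this statement as an observation and offers no proof, so there is nothing to compare against beyond noting that your argument is the standard one.
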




So, $\mathias^-_1$ is in fact equivalent to Mathias forcing $\mathias(\ideal F_1)$ with respect to the filter $\ideal F_1$.


Thus, we get:

\begin{fact}
$\mathias^-_1$ satisfies the countable chain condition.
\end{fact}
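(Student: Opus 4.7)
The plan is to derive ccc from the stronger property of $\sigma$-centeredness. The preceding observation establishes that $\ideal F_1 = \{A \subseteq \omega : d^-(A) = 1\}$ is a filter, and the remark just above identifies $\mathias^-_1$ with $\mathias(\ideal F_1)$. For any filter $\ideal F$ on $\omega$, two Mathias conditions $(s, A), (s, B) \in \mathias(\ideal F)$ sharing the same finite stem $s$ are automatically compatible: their common refinement $(s, A \cap B)$ is again a condition, because filters are closed under finite intersection and $\min(A \cap B) \geq \max(\min A, \min B) > \max(s)$.

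Consequently, for each fixed $s \in [\omega]^{<\omega}$ the set
\[
C_s := \{(s, A) \in \mathias^-_1 : A \in \ideal F_1,\ \min(A) > \max(s)\}
\]
is centered in $\mathias^-_1$. Since there are only countably many finite stems, $\mathias^-_1 = \bigcup_{s \in [\omega]^{<\omega}} C_s$ is a countable union of centered sets, hence $\sigma$-centered, and a fortiori ccc. There is no real obstacle here: the whole argument is just an unwinding of the filter property of $\ideal F_1$ that the preceding observation supplies, together with the standard centering-by-stem trick that was already invoked in Section~\ref{upper_density_geq} to contrast $\mathias^+_\varepsilon$ with filter-indexed Mathias forcings.
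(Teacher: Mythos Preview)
Your argument is correct and matches the paper's intended reasoning exactly: the paper states this as a bare Fact with no proof, relying on the preceding identification $\mathias^-_1 = \mathias(\ideal F_1)$ and the standard observation (made explicit elsewhere in the paper, e.g.\ in the discussion after Lemma~\ref{dominating} and in the proof of Corollary following Proposition~\ref{sigma_centered}) that Mathias forcing relative to any filter is $\sigma$-centered via the stem decomposition. You have simply written out that standard argument in full.
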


Now, we will see that the forcing $\mathias^-_\varepsilon$ is a disjoint union of $\sigma$-centred forcings.

\begin{definition}
Let $A,B \subseteq \omega$. The lower density of $B$ with respect to $A$ is defined by:
\[
d_A^-(B):= \liminf_{n \rightarrow \infty} \frac{|A\cap B\cap n|}{|A\cap n|}.
\]
\end{definition} 
\begin{lemma}
Let $B\subseteq A \subseteq \omega, d^-(A)>0$ and $d_A^-(B) < 1$. \\
Then, $d^-(B) <d^-(A).$
\end{lemma}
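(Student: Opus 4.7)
The plan is to exploit the pointwise identity
$$
\frac{|B\cap n|}{n} \;=\; \frac{|B\cap n|}{|A\cap n|}\cdot \frac{|A\cap n|}{n},
$$
and turn the hypothesis $d_A^-(B)<1$ into a uniform multiplicative saving. First I would unpack the hypothesis: by the definition of $\liminf$ there is some $\delta>0$ together with an infinite $S\subseteq\omega$ such that $|B\cap n|/|A\cap n|\le 1-\delta$ for every $n\in S$, so that along $S$,
$$
\frac{|B\cap n|}{n}\;\le\;(1-\delta)\cdot\frac{|A\cap n|}{n}.
$$
Separately, $d^-(A)=\liminf_n|A\cap n|/n$ means that there is an infinite set $T\subseteq\omega$ on which $|A\cap n|/n\to d^-(A)$.

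The heart of the argument is then to produce an infinite common sub-sequence $S'$, interlacing witnesses from $S$ and from $T$, along which both $|B\cap n|/|A\cap n|\le 1-\delta$ and $|A\cap n|/n\to d^-(A)$ hold. Granted such $S'$, passing to the limit inferior through $S'$ yields
$$
d^-(B)\;\le\;\liminf_{n\in S'}\frac{|B\cap n|}{n}\;\le\;(1-\delta)\cdot d^-(A)\;<\;d^-(A),
$$
because $d^-(A)>0$, which is exactly the conclusion.

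The main obstacle, and the step where I expect the real work to lie, is the extraction of $S'$: the levels where $B$ is thin inside $A$ (witnessing $d_A^-(B)<1$) need not a priori coincide with the levels where $|A\cap n|/n$ attains its liminf. I would attack this by a contrapositive/contradiction move. Assume $d^-(B)=d^-(A)=\alpha$ (it is at most $\alpha$ since $B\subseteq A$) and pick any $n_k\in T$ with $|A\cap n_k|/n_k\to\alpha$; then from $d^-(B)\ge\alpha$ and $|B\cap n_k|\le|A\cap n_k|$ one forces $|B\cap n_k|/n_k\to\alpha$ and hence $|B\cap n_k|/|A\cap n_k|\to 1$. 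Combining this with the structural fact that $|A\cap n|$ and $|B\cap n|$ are monotone counting functions, and with the witnesses in $S$ from the hypothesis $d_A^-(B)<1$, an interpolation between consecutive elements of $T$ and $S$ should produce levels where the ratio $|B\cap n|/|A\cap n|$ is simultaneously bounded away from $1$ and $|A\cap n|/n$ is close to $\alpha$, contradicting the previous forced limit and delivering the required $S'$.
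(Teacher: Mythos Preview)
Your instinct about the ``main obstacle'' is exactly right, but the interpolation you sketch cannot be completed: the lemma as stated is false. Take a fast-growing sequence $(m_k)_{k\ge 0}$ (say $m_k=2^{2^k}$), put $B=2\omega$ (the even numbers) and
\[
A \;=\; B \;\cup\; \bigcup_{j\ge 0}[m_{2j},m_{2j+1}).
\]
Then $B\subseteq A$ and $d^-(B)=\tfrac12$. Since $2\omega\subseteq A$ we have $|A\cap n|/n\ge\tfrac12-O(1/n)$ for every $n$, while at the ends of the odd blocks $n=m_{2j+2}$ (where $A$ coincides with $B$ on the last, dominant block) one gets $|A\cap n|/n\to\tfrac12$; hence $d^-(A)=\tfrac12=d^-(B)$. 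On the other hand, at the ends of the even blocks $n=m_{2j+1}$ the set $A$ contains the entire dominant block, so $|A\cap n|/n\to 1$ while $|B\cap n|/n\to\tfrac12$, giving $|B\cap n|/|A\cap n|\to\tfrac12$ and thus $d_A^-(B)\le\tfrac12<1$. All hypotheses hold, yet $d^-(B)=d^-(A)$. This is exactly why your contradiction move stalls: along $T$ (where $|A\cap n|/n\to d^-(A)$) the ratio $|B\cap n|/|A\cap n|$ really does tend to $1$, while along $S$ (where that ratio is $\le 1-\delta$) the quantity $|A\cap n|/n$ stays bounded well above $d^-(A)$; monotonicity of the counting functions gives you no way to merge the two sequences.

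For comparison, the paper's own argument merely unpacks $d_A^-(B)<1$ into ``$|B\cap m|<(1-2^{-n_0})|A\cap m|$ for infinitely many $m$'' and stops, which is a restatement of the hypothesis and does not reach the conclusion either. A correct salvage is to add the assumption that $A$ has a density, i.e.\ $d^-(A)=d^+(A)$; then $|A\cap n|/n\to d^-(A)$ along \emph{every} sequence, and your first displayed identity immediately yields $d^-(B)\le(1-\delta)\,d^-(A)<d^-(A)$.
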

\begin{proof}
Let $B\subseteq A \subseteq \omega$ be as in the lemma. Then, $d_A^-(B)<1$ implies
\[
 (\exists n_0<\omega) ( \forall k<\omega) (\exists m\geq k) \left( \frac{|B\cap m|}{|A\cap m|} < 1 -2^{-n_0} \right).
\]  
So, we in fact get
\[
( \exists^\infty m) \left( |B\cap m|<  (1 -2^{-n_0})\cdot|A\cap m| \right).
\]
\end{proof}

\begin{corollary}\label{technical_lower_density}
If $d^-(A) = \eps$, then ${\ideal F}(A):= \{B \subseteq A \such d^-(B) = \eps\}$ is a filter.
\end{corollary}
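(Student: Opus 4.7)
The plan is to reduce everything to the equivalent description
\begin{equation*}
B \in \ideal F(A) \iff d_A^-(B) = 1,
\end{equation*}
where the right-hand side in fact forces the ratio $\frac{|A \cap B \cap n|}{|A \cap n|}$ to converge to $1$, since it is bounded above by $1$. The direction ``$\Rightarrow$'' is the contrapositive of the preceding lemma: if $d^-(B) = \varepsilon = d^-(A)$, then $d_A^-(B) < 1$ is ruled out. For ``$\Leftarrow$'', I factor
\begin{equation*}
\frac{|B \cap n|}{n} \;=\; \frac{|B \cap n|}{|A \cap n|}\cdot \frac{|A \cap n|}{n};
\end{equation*}
for every $\delta > 0$ the first factor eventually exceeds $1-\delta$, so $\liminf_n \frac{|B\cap n|}{n} \geq (1-\delta)\varepsilon$, and the trivial upper bound $d^-(B) \leq d^-(A) = \varepsilon$ yields $d^-(B) = \varepsilon$.

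With this characterization in hand, the filter axioms become routine. Upward closure within $\mathcal P(A)$ is immediate from monotonicity of $d^-$: if $B \subseteq C \subseteq A$ and $d^-(B) = \varepsilon$, then $\varepsilon \leq d^-(C) \leq d^-(A) = \varepsilon$. For closure under finite intersection, suppose $B, C \in \ideal F(A)$, so both $\frac{|(A\setminus B)\cap n|}{|A\cap n|}$ and $\frac{|(A\setminus C)\cap n|}{|A\cap n|}$ tend to $0$; since $A \setminus (B\cap C) = (A\setminus B) \cup (A\setminus C)$, subadditivity gives
\begin{equation*}
\frac{|(A\setminus(B\cap C))\cap n|}{|A\cap n|} \;\leq\; \frac{|(A\setminus B)\cap n|}{|A\cap n|} + \frac{|(A\setminus C)\cap n|}{|A\cap n|} \;\longrightarrow\; 0,
\end{equation*}
so $d_A^-(B\cap C) = 1$ and hence $B\cap C \in \ideal F(A)$ by the equivalence. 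Trivially $A\in \ideal F(A)$, and for $\varepsilon > 0$ the empty set is excluded, so $\ideal F(A)$ is a (proper) filter on $A$.

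I do not foresee any real obstacle. The only subtle point is the equivalence above, which converts ``preservation of $d^-$-value under intersection'' (an a priori delicate $\liminf$ manipulation across three different denominators) into the transparent fact that the $A$-negligible subsets form an ideal under subadditivity.
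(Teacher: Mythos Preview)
Your argument is correct and is exactly the intended route: the paper states this as an immediate corollary of the preceding lemma, and you have simply spelled out the details---the contrapositive gives the characterization $B\in\ideal F(A)\Leftrightarrow d_A^-(B)=1$, after which upward closure and closure under finite intersection are the routine computations you give. Nothing to add.
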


\begin{proposition}\label{sigma_centered}
Let $\varepsilon \in (0,1]$.

 The forcing notion $\mathias^-_\varepsilon$ is equivalent to a disjoint union of $\sigma$-centred forcings.

\end{proposition}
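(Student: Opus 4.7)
The plan is to show that a dense subset of $\mathias^-_\varepsilon$ partitions canonically into $\sigma$-centred pieces, using the filter structure given by Corollary~\ref{technical_lower_density}. First I will verify that $D := \{(s,A) \in \mathias^-_\varepsilon : d^-(A) = \varepsilon\}$ is dense in $\mathias^-_\varepsilon$: given a condition with $d^-(A) = \varepsilon' \geq \varepsilon$ and enumeration $A = \{a_0 < a_1 < \dots\}$, keeping $a_i$ exactly when $\lfloor (i+1)\varepsilon/\varepsilon'\rfloor > \lfloor i\varepsilon/\varepsilon'\rfloor$ produces $A' \subseteq A$ of lower density precisely $\varepsilon$, and then $(s,A') \leq (s,A)$ lies in $D$.

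On $\mathcal{E} := \{A \subseteq \omega : d^-(A) = \varepsilon\}$ I then define $A_1 \sim A_2$ iff there exists $B \subseteq A_1 \cap A_2$ with $d^-(B) = \varepsilon$. Reflexivity and symmetry are immediate; for transitivity, if $B_{12}$ witnesses $A_1 \sim A_2$ and $B_{23}$ witnesses $A_2 \sim A_3$, then both lie in the filter ${\ideal F}(A_2)$ supplied by Corollary~\ref{technical_lower_density}, so $B_{12} \cap B_{23} \in {\ideal F}(A_2)$ still has lower density $\varepsilon$, and it is contained in $A_1 \cap A_3$, hence witnesses $A_1 \sim A_3$. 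This transitivity step is the main obstacle, and it is precisely where the filter property of ${\ideal F}(A_2)$ is essential; without Corollary~\ref{technical_lower_density} the naive guess would not be an equivalence relation.

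For each equivalence class $[A] \in \mathcal{E}/\!\sim$ I put $\mathbb{Q}_{[A]} := \{(s,B) \in D : B \in [A]\}$. The family $\{\mathbb{Q}_{[A]}\}_{[A]}$ partitions $D$. Conditions from different pieces are incompatible: any common extension of $(s_1,B_1) \in \mathbb{Q}_{[A_1]}$ and $(s_2,B_2) \in \mathbb{Q}_{[A_2]}$ in $\mathias^-_\varepsilon$ can, by the density of $D$, be refined to some $(s,C) \in D$ with $C \subseteq B_1 \cap B_2$ and $d^-(C) = \varepsilon$, forcing $B_1 \sim B_2$ and hence $[A_1] = [A_2]$.

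Finally, each $\mathbb{Q}_{[A]}$ is $\sigma$-centred. Since $[\omega]^{<\omega}$ is countable, it suffices to verify that for fixed stem $s$ the family $\{(s,B) \in \mathbb{Q}_{[A]}\}$ is centred. Given $(s,B_1), \dots, (s,B_n)$ in this family, induction on $n$ produces a common refinement: assuming $C_n \subseteq B_1 \cap \dots \cap B_n$ with $d^-(C_n) = \varepsilon$, one has $C_n \sim B_1 \sim A$ because $C_n \subseteq B_1$ with equal lower density, and the relation $C_n \sim B_{n+1}$ then supplies $C_{n+1} \subseteq C_n \cap B_{n+1}$ of lower density $\varepsilon$. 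The condition $(s,C_n)$ lies in $\mathbb{Q}_{[A]}$ and extends each $(s,B_i)$. This yields the decomposition $D = \bigsqcup_{[A] \in \mathcal{E}/\sim} \mathbb{Q}_{[A]}$, establishing that $\mathias^-_\varepsilon$ is forcing-equivalent to a disjoint union of $\sigma$-centred forcings.
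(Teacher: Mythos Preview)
Your proof is correct but takes a different route from the paper's. The paper simply fixes a maximal antichain $\mathcal{A}$ inside the dense set $\mathcal{D}=\{(s,A):d^-(A)=\varepsilon\}$ and observes that each cone $\mathias^-_\varepsilon\restriction p$ for $p\in\mathcal{A}$ is $\sigma$-centred: below $p=(s,A_p)$ every condition has second coordinate in the filter $\mathcal{F}(A_p)$ of Corollary~\ref{technical_lower_density}, so conditions with the same stem are automatically compatible. The decomposition is then just $\bigcup_{p\in\mathcal{A}}\mathias^-_\varepsilon\restriction p$.

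Your argument instead builds a canonical equivalence relation on $\mathcal{E}=\{A:d^-(A)=\varepsilon\}$ and partitions $D$ by the $\sim$-class of the second coordinate. This avoids the arbitrary choice of a maximal antichain and yields pieces that are intrinsically defined; the price is the extra work verifying transitivity of $\sim$ and centredness of each $\mathbb{Q}_{[A]}$ via an induction. Both proofs lean on Corollary~\ref{technical_lower_density} at exactly the same point --- in the paper to intersect two subsets of $A_p$, in yours to intersect two witnesses inside $A_2$ for transitivity and again inside $C_n$ for centredness. Your explicit thinning of $A$ to lower density exactly $\varepsilon$ is also more detailed than the paper, which just asserts density of $\mathcal{D}$ without proof.
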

\begin{proof}
First, note that $\ideal D := \{p=(s,A_p)\in \mathias^-_\varepsilon \such d^-(A_p)=\varepsilon  \}$ is an open dense subset of $\mathias^-_\varepsilon$. Fix a maximal antichain $\ideal A \subseteq \ideal D$. For each $p\in \ideal A$ the restriction of $\mathias^-_\varepsilon$ to $p$ is denoted by $\mathias^-_\varepsilon\restric p= \{q \in \mathias^-_\varepsilon \such q\leq p\}$. Then, each $\mathias^-_\varepsilon\restric p$ has the countable chain condition. Indeed, fix $p=(s,A_p)\in \ideal A$. By Corollary \ref{technical_lower_density} we know that the set $\{ B\subseteq A_p \such d^-(B)=\varepsilon \}$ is closed under finite intersections. Thus, any two conditions $q_0,q_1\leq p$ which have the same stem, are compatible.

 Finally,  we can set $\mathias^-_\varepsilon=\bigcup_{p\in \ideal A} \mathias^-_\varepsilon\restric p.$
\end{proof}
\begin{corollary}
$\mathias^-_\varepsilon$ is proper for each $\varepsilon\in(0,1)$.
\end{corollary}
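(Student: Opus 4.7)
The plan is to leverage Proposition \ref{sigma_centered}, which decomposes $\mathias^-_\varepsilon$ as $\bigcup_{p \in \ideal{A}} \mathias^-_\varepsilon \restric p$, where $\ideal{A} \subseteq \ideal{D}$ is a maximal antichain of $\mathias^-_\varepsilon$ and each piece $\mathias^-_\varepsilon \restric p$ is $\sigma$-centered. Since every $\sigma$-centered partial order is ccc and hence proper, each individual piece is already proper, so the task reduces to transferring properness from the pieces up to the whole forcing.

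To this end, I would fix a sufficiently large regular $\theta$, a countable elementary submodel $M \prec H_\theta$ with $\mathias^-_\varepsilon, \ideal{A}, \ideal{D}, \varepsilon \in M$, and an arbitrary $q \in \mathias^-_\varepsilon \cap M$. Using maximality of $\ideal{A}$ together with elementarity, I would find $p \in \ideal{A} \cap M$ compatible with $q$ and then a common extension $r \in M$ with $r \leq p$ and $r \leq q$; note that $r \in \mathias^-_\varepsilon \restric p$ and $\mathias^-_\varepsilon \restric p \in M$. The goal is to show that $r$ is an $(M, \mathias^-_\varepsilon)$-generic master condition. For any open dense $D \in M$ of $\mathias^-_\varepsilon$, the trace $D_p := D \cap (\mathias^-_\varepsilon \restric p)$ is again in $M$ and open dense in the ccc forcing $\mathias^-_\varepsilon \restric p$, so elementarity yields a maximal antichain $B \in M$ with $B \subseteq D_p$; ccc forces $B$ to be countable, whence $B \subseteq M$.

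The decisive step is then that every extension $r' \leq r$ in the global forcing $\mathias^-_\varepsilon$ automatically lies in $\mathias^-_\varepsilon \restric p$, because the pieces sitting below the antichain $\ideal{A}$ are genuinely disjoint (any $r'$ simultaneously below two distinct elements of $\ideal{A}$ would witness their compatibility). Hence $r'$ is compatible with some $b \in B \subseteq D \cap M$, so $D \cap M$ is predense below $r$ in $\mathias^-_\varepsilon$; this makes $r$ a master condition and establishes properness. The main obstacle, modest as it is, is precisely this last transfer between a single piece and the global forcing: once one observes that extensions of $r$ cannot leave $\mathias^-_\varepsilon \restric p$, the ccc of each piece carries over the required predensity, and no genuinely new combinatorial work is needed beyond Proposition \ref{sigma_centered}.
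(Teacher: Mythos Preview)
Your proof is correct and follows the same approach the paper intends: the corollary is stated without proof immediately after Proposition~\ref{sigma_centered}, relying on the folklore fact that a forcing equivalent to a disjoint union of $\sigma$-centred (hence ccc) posets is proper, and you have simply written out the standard master-condition verification of that fact.
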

Next, we show that $\mathias^-_\varepsilon$ has antichains of size the continuum, whenever $\varepsilon<1$.

\begin{lemma}
Let $\varepsilon \in (0,1)$. There is a family of sets $\{A_f \such f\in I \}$ such that
\begin{enumerate}
\item $I\subseteq 2^\omega$ has size continuum,
\item $d^-(A_f) \geq \varepsilon,$ for all $f\in I$,
\item $d^-(A_f\cap A_g) < \varepsilon$, for all $f\neq g$ in $I$.
\end{enumerate} 
\end{lemma}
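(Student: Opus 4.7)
The plan is to construct the family combinatorially via block patterns, encoding one bit of $f$ in each dyadic block while keeping the lower density of $A_f$ pinned at $\varepsilon$ and forcing pairwise intersections to suffer a uniform loss.

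Partition $\omega$ into blocks $B_k = [2^k, 2^{k+1})$ of length $M_k = 2^k$, and identify each $B_k$ with $\{0, \dots, M_k - 1\}$ by the obvious shift. Fix a constant $c$ with $0 < c \leq \varepsilon(1-\varepsilon)$, say $c = \varepsilon(1-\varepsilon)/2 > 0$; note we will need $\varepsilon < 1$ precisely to have $c > 0$. Set $a_k = \lfloor \varepsilon M_k \rfloor$ and $c_k = \lfloor c M_k \rfloor$, and define two local patterns
\[
X^k_0 = [0, a_k), \qquad X^k_1 = [0, a_k - c_k) \cup [a_k, a_k + c_k) \subseteq B_k.
\]
Choose $I \subseteq 2^\omega$ to be a transversal for the equivalence relation ``$f(k) = g(k)$ for all but finitely many $k$''; since each equivalence class is countable, $|I| = 2^\omega$, and any two distinct $f, g \in I$ differ at infinitely many coordinates. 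For $f \in I$, let $A_f = \bigcup_k (2^k + X^k_{f(k)})$.

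For clause (2), I would check $d^-(A_f) \geq \varepsilon$ by separately bounding the cumulative density at block boundaries and inside each block. At $n = 2^k$ one has $|A_f \cap n| = a_0 + \dots + a_{k-1} = \varepsilon \cdot 2^k - O(k)$, giving $|A_f \cap n|/n \to \varepsilon$. Inside $B_k$, for any prefix $m \leq M_k$ and either $b \in \{0,1\}$, a direct four-case check shows $|X^k_b \cap [0, m)| \geq \varepsilon m$ whenever $c \leq \varepsilon(1-\varepsilon)$; the binding case is $b = 1$ with $m = a_k$, where the inequality reduces exactly to $\varepsilon - c \geq \varepsilon^2$. Adding the block-boundary mass yields $|A_f \cap (2^k + m)| \geq \varepsilon (2^k + m) - O(k)$, hence $d^-(A_f) = \varepsilon$.

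For clause (3), observe that $X^k_{f(k)} \cap X^k_{g(k)}$ equals $X^k_0$ (of size $a_k$) when $f(k) = g(k)$, and equals $[0, a_k - c_k)$ (of size $a_k - c_k$) when $f(k) \neq g(k)$. Hence
\[
|A_f \cap A_g \cap 2^{k+1}| = \sum_{j \leq k} a_j - \sum_{\substack{j \leq k \\ f(j) \neq g(j)}} c_j.
\]
At any $k^*$ for which $f(k^*) \neq g(k^*)$, the disagreement at $k^*$ alone contributes $c_{k^*} \approx c \cdot 2^{k^*}$, so the density at $n = 2^{k^*+1}$ is bounded by $\varepsilon - c_{k^*}/2^{k^*+1} + o(1) = \varepsilon - c/2 + o(1)$. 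Since by the choice of $I$ there are infinitely many such $k^*$, we obtain $d^-(A_f \cap A_g) \leq \varepsilon - c/2 < \varepsilon$.

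The main design obstacle is the shape of $X^k_1$: a naive translate of $X^k_0$ (say $[\delta M_k, \delta M_k + a_k)$) would collapse the intra-block prefix density at the start of the translated interval and push $d^-(A_f)$ strictly below $\varepsilon$. The two-piece shape chosen here — keeping $a_k - c_k$ leading positions fixed and relocating only the last $c_k$ elements just past position $a_k$ — is precisely what keeps the cumulative density at every point of $B_k$ above $\varepsilon$ while still producing the $c_k$ disagreement needed in (3), and the compatibility constraint $c \leq \varepsilon(1-\varepsilon)$ is exactly what makes the bound $\varepsilon \in (0,1)$ work.
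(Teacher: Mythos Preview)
Your proof is correct and takes a genuinely different route from the paper's. The paper encodes $f$ by \emph{translating} a single interval of length $\approx \varepsilon\,|I_n|$ inside each dyadic block $I_n=[2^{n+1},2^{n+2})$, with shift $F_f(n)=\sum_{i\le n} f(i)\,2^{n-i}$ accumulating all earlier bits, and then bounds $|F_f(n)-F_g(n)|$ from below to control the overlap. You instead fix just \emph{two} front-loaded patterns $X^k_0,X^k_1$ per block and select one according to the single bit $f(k)$. This buys you two simplifications: because both patterns start at position $0$ of the block, the prefix count inside every block stays at least $\varepsilon m$ (up to floor error), so $d^-(A_f)\ge\varepsilon$ is immediate --- there is no mid-block gap to worry about as there is with a translated interval; and the intersection computation collapses to the single identity $X^k_0\cap X^k_1=[0,a_k-c_k)$, giving a uniform deficit $c/2$ at every disagreement block with no wraparound case split. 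The paper's cumulative encoding is more delicate but achieves a similar uniform gap constant.

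Two minor remarks. The pointwise inequality $|X^k_b\cap[0,m)|\ge\varepsilon m$ holds only up to an $O(1)$ floor error (e.g.\ at $m=M_k$ one gets $\lfloor\varepsilon M_k\rfloor$, not $\varepsilon M_k$), but your cumulative $O(k)$ correction already absorbs this, so the conclusion $d^-(A_f)\ge\varepsilon$ is unaffected. And your transversal for the eventually-equal relation invokes choice; an explicit alternative is to let $I$ be the characteristic functions of an almost disjoint family of size $\mathfrak c$, which already guarantees that any two distinct members differ infinitely often.
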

\begin{proof}
Let $\varepsilon\in (0,1)$.
First, we fix a suitable set of indices $I\subseteq 2^\omega$. To this end, let $I$ be any family of functions of size continuum with the property
\begin{align}
\forall f,g \in I ( f\neq g \rightarrow \exists^\infty i,j (f(i)=1 > g(i)\wedge f(j)= 0< g(j))).\label{index_family}
\end{align}
Next, we define for $n\in\omega$ the interval $I_n:=[2^{n+1}, 2^{n+2})\subseteq \omega$. Then, all intervals are pairwise disjoint and each interval $I_n $ has size $2^{n+1}$. Now, we define for each $f\in I$ a function $F_f\in \omega^\omega$ via 
$$F_f(n):= \sum_{i=0}^n f(i)\cdot 2^{n-i}.$$
 Observe, that the functions $F_f$ have the property that for $n\in \omega (F_f(n)\in [ 0 , 2^{n+1}))$, in other words there are $2^{n+1}$ possible values for $F_f(n)$.\\
We define the sets $A_f$ of lower density $\geq \varepsilon$ recursively over $n$ as a disjoint union  $A_f = \bigcup_n A^n_f$, such that $A^n_f\subseteq I_n$. To this end, fix $n$ and let $k$ be the closest natural number  $\leq \varepsilon \cdot 2^{n+1}$, so $k:= \max\{ j \in \omega \such j \leq \varepsilon \cdot 2^{n+1} \}$. We differentiate two cases:\\
If $2^{n+1} + F_f(n) + k - 1 < 2^{n+2} : $ We set 
$$A^n_f := \{2^{n+1} + F_f(n), 2^{n+1} + F_f(n) + 1,\dots,2^{n+1} + F_f(n) + k-1\}$$
Otherwise, there is $j \leq k-1$ such that $2^{n+1} + F_f(n) + j = 2^{n+2}-1$ and we can set 
$$A^n_f := \{2^{n+1}, \dots ,2^{n+1} + k-1-j \} \cup \{2^{n+1} + F_f(n),\dots,2^{n+1} + F_f(n) + j\}$$
Finally we set $A_f := \bigcup A^n_f$.  It follows directly from the definition that each set $A_f$ has density $\varepsilon$ and for $n\in\omega$ we get $(A_f\cap I_n = A^n_f)$.\\
It is left to show that for $f\neq g$ the set $A_f\cap A_g$ has lower density $<\varepsilon$. So, fix $f,g\in I$ and let $i_0$ be minimal such that $f(i_0)\neq g(i_0)$. W.l.o.g assume $f(i_0)=1$ and $g(i_0)=0$, so in particular $F_f(n)>F_g(n)$ for all $n\geq i_0$. By property (\ref{index_family}) of $I$ there is $i_1>i_0$ such that $f(i_1) = 1$ and $g(i_1) =0$. Then, for all $n>i_1$ we have 
$$|F_f(n)-F_g(n)| \geq 2^{n-i_0} + 2^{n-i_1} - (\sum_{i>i_0}^{i_1-1}2^{n-i}+ 2^{n-i_1} - 1) > 2^{n-i_1+1}.$$
\begin{claim}
There is a strictly positive constant $c\in (0,1)$ such that for all $n\in \omega $
$$\frac{|A_f\cap A_g \cap I_n|}{|I_n|} \leq \varepsilon - c.$$ 
\end{claim}
Clearly, the claim above implies $d^-(A_f\cap A_g)<\varepsilon$. So, fix $n\in \omega$. Since we are only interested in the size of $A_f\cap A_g \cap I_n$, we might assume, by a transformation argument, that $g(i)=0,i<n$ and $f(0) = 0$. Thus, $A^n_g$ consists of the first $k$ elements of $I_n$ i.e., $\{2^{n+1}, \dots ,2^{n+1} + k-1 \}$ and $F_f(n)-F_g(n) = F_f(n) < 2^{n}$. Again, we have to differentiate two cases:\\
If $F_f(n)+2^{n+1} + k-1 < 2^{n+2}$: Then,
\[
\frac{|A^n_f \cap A^n_g\cap I_n|}{2^{n+1}}\leq \frac{k-2^{n-i_1+1}}{2^{n+1}} \leq \varepsilon - \frac{1}{2^{i_1}}.
\]
Otherwise $F_f(n)+2^{n+1} + k-1 \geq 2^{n+2}$. In this case we must have $\varepsilon> 1/2$ and we get:
\[
\frac{|A^n_f \cap A^n_g\cap I_n|}{2^{n+1}}\leq  \frac{2^{n+1} - 2(2^{n+1}-k)}{2^{n+1}} \leq 2\cdot \varepsilon - 1.
\]
We can set $c:= \min\{2^{-i_1}, (1-\varepsilon)\}$.
\end{proof}

\begin{corollary}
$\mathias^-_\varepsilon$ has antichains of size continuum for $\varepsilon\in(0,1)$.
\end{corollary}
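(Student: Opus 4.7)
The plan is to use the family $\{A_f \such f\in I\}$ furnished by the preceding lemma directly, with no further construction. Set
\[
\mathcal{E}:=\{(\langle\rangle, A_f) \such f\in I\}\subseteq \mathias^-_\varepsilon.
\]
Each pair $(\langle\rangle, A_f)$ is a legitimate condition of $\mathias^-_\varepsilon$ because item (2) of the lemma guarantees $d^-(A_f)\geq \varepsilon$, and $\mathcal{E}$ has cardinality $|I|=2^\omega$ by item (1). So the only task left is to check that any two distinct members of $\mathcal{E}$ are incompatible.

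For this, fix $f\neq g$ in $I$ and suppose toward a contradiction that some common extension $(t,B)\leq (\langle\rangle, A_f), (\langle\rangle, A_g)$ exists in $\mathias^-_\varepsilon$. By the definition of the Mathias order, $B\subseteq A_f$ and $B\subseteq A_g$, so $B\subseteq A_f\cap A_g$. Since lower density is monotone under inclusion, it follows that
\[
d^-(B)\leq d^-(A_f\cap A_g) < \varepsilon,
\]
where the strict inequality uses item (3) of the lemma. But membership of $(t,B)$ in $\mathias^-_\varepsilon$ requires $d^-(B)\geq \varepsilon$, a contradiction. Hence $\mathcal{E}$ is an antichain of size continuum.

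There is no real obstacle here: the entire content of the corollary has already been packaged in the lemma, and the only observation to record is the trivial monotonicity $B\subseteq C \Rightarrow d^-(B)\leq d^-(C)$ together with the fact that a common extension in Mathias forcing is a subset of each component.
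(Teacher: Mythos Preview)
Your proof is correct and follows exactly the same approach as the paper: the paper simply states that $\{(\langle\rangle,A_f)\such f\in I\}$ is an antichain of size continuum, and you have spelled out the incompatibility argument (via $B\subseteq A_f\cap A_g$ and monotonicity of lower density) that the paper leaves implicit.
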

\begin{proof}
Take the family of sets $\{A_f \such f\in I \}$ as in the lemma above. Then, $\{ (\la  \ra, A_f) \such f\in I \}\subseteq \mathias^-_\varepsilon$ is an antichain of size continuum.
\end{proof}
\begin{corollary}
There is no filter $\ideal F$ such that $\mathias(\ideal F)$ and $\mathias^-_\varepsilon$ are forcing equivalent.
\end{corollary}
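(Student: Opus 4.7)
The plan is to extract this corollary directly from the preceding antichain corollary via a ccc obstruction. First, I would observe that for \emph{any} filter $\ideal F$ on $\omega$, the forcing $\mathias(\ideal F)$ is $\sigma$-centered. Partition $\mathias(\ideal F)$ by stems into the countable family $\{C_s \such s\in[\omega]^{<\omega}\}$, where $C_s = \{(s,A) \in \mathias(\ideal F)\}$. Any finitely many conditions $(s,A_0),\dots,(s,A_{k-1})$ in a common $C_s$ admit the refinement $(s,\bigcap_{i<k} A_i)$, which lies in $\mathias(\ideal F)$ because $\ideal F$ is closed under finite intersections; hence each $C_s$ is centered. In particular $\mathias(\ideal F)$ satisfies the countable chain condition.

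Next, I would invoke the fact that cellularity is an isomorphism invariant of the complete Boolean algebra and is therefore preserved under forcing equivalence. By the preceding corollary, $\mathias^-_\varepsilon$ carries an antichain of size $\mathfrak{c}$ for every $\varepsilon \in (0,1)$, so it fails to be ccc. Consequently it cannot be forcing equivalent to any $\mathias(\ideal F)$, since the latter is always ccc by the previous paragraph.

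There is essentially no obstacle here: the content reduces to the contrast between the genuine closure under finite intersections enjoyed by a filter (which forces $\sigma$-centeredness) and the failure of this closure for $\{A \subseteq \omega \such d^-(A) \geq \varepsilon\}$ when $\varepsilon<1$, already exploited in the preceding antichain construction. One may wish to add the remark that the restriction $\varepsilon \in (0,1)$ is optimal, since $\mathias^-_1$ coincides with $\mathias(\ideal F_1)$ and hence is forcing equivalent to a filtered Mathias forcing.
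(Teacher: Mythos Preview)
Your argument is correct and follows exactly the same route as the paper: $\mathias(\ideal F)$ is $\sigma$-centered (hence ccc) for any filter, while the preceding corollary gives antichains of size $\mathfrak{c}$ in $\mathias^-_\varepsilon$ for $\varepsilon\in(0,1)$, and ccc is a forcing invariant. Your closing remark that the restriction $\varepsilon\in(0,1)$ is necessary (since $\mathias^-_1=\mathias(\ideal F_1)$) is a worthwhile addition that the paper leaves implicit.
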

\begin{proof}
This follows from the previous corollary, together with the fact, that $\mathias (\ideal F)$ is a $\sigma$-centered forcing for each filter.
\end{proof}
Analog to the upper density case $\mathias^+_\varepsilon$ we get that $\mathias^-_\varepsilon$ adds Cohen reals.
\begin{proposition}
$\mathias^-_\varepsilon$ adds Cohen reals.
\end{proposition}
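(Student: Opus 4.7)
The plan is to adapt the proof of Proposition~\ref{Cohen} for the upper density case almost verbatim. I would first fix $N\in\omega$ with $1/N<\varepsilon$ and partition $\omega$ into $N+1$ disjoint pieces $a_0,\dots,a_N$ of (two-sided) density $1/(N+1)$. Then I would define the Cohen name $\dot c$ from the generic $x_G$ exactly as before: enumerating the positions $n_0<n_1<\dots$ where $x_G$ takes value $1$, set $\dot c(k)=0$ iff $n_{2k}$ and $n_{2k+1}$ belong to the same piece $a_i$, and $\dot c(k)=1$ otherwise.

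The one density-theoretic fact to check, replacing the corresponding observation in the upper density proof, is that for any $A\subseteq\omega$ with $d^-(A)\geq\varepsilon$ and any $i\leq N$ with $A\cap a_i$ infinite, the complement $A\setminus a_i$ is also infinite. This is immediate: if $A\setminus a_i$ were finite, then $|A\cap n|\leq|a_i\cap n|+O(1)$ and so $d^-(A)\leq d^-(a_i)=1/(N+1)<\varepsilon$, a contradiction.

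Given that fact, the stem-extension argument proceeds identically to Proposition~\ref{Cohen}: given $t\in 2^{<\omega}$ and a condition $(s,A)\in\mathias^-_\varepsilon$ with maximal $r\trianglelefteq\dot c$ already forced, I build a decreasing sequence $(s,A)=(s_0,A_0)\geq(s_1,A_1)\geq\dots\geq(s_{|t|},A_{|t|})=(s',A')$ with $|s_{j+1}^{-1}(\{1\})|=|s_{j}^{-1}(\{1\})|+2$, at step $j$ picking some $i\leq N$ with $A_j\cap a_i$ infinite and appending either two elements of $A_j\cap a_i$ (when $t(j)=0$) or one element of $A_j\cap a_i$ followed by one of $A_j\setminus a_i$ (when $t(j)=1$); the density observation guarantees both choices are available. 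Setting $A_{j+1}:=A_j\setminus(\max(s_{j+1})+1)$ only removes finitely many integers, so $A'$ still has $d^-(A')\geq\varepsilon$ and $(s',A')\in\mathias^-_\varepsilon$ forces $r{}^\smallfrown t\trianglelefteq\dot c$.

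The main — and only — point requiring attention is the density observation above; everything else is a direct transcription of the upper density argument, so I expect no real obstacle.
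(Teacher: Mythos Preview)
Your proposal is correct and follows essentially the same approach as the paper: the paper's proof simply says to repeat Proposition~\ref{Cohen} verbatim, noting that any set of lower density $\geq\varepsilon$ must intersect at least two of the $a_i$ infinitely often, which is exactly your density observation phrased in the contrapositive. Your write-up is in fact more detailed than the paper's two-sentence proof.
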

\begin{proof}
We can repeat the proof of Proposition~\ref{Cohen}. Again we divide $\omega$ into $N+1$ disjoint sets $a_i\subseteq \omega,i<N+1 $ of density $1/(N+1)$, where $N$ is such that $1/N<\varepsilon$. Now to carry out the rest of the construction it enough to see that any set of lower density $\varepsilon$ intersects at least two of the sets $a_i$ infinitely often.
\end{proof}
Let $x_G$ be the generic real added by $\mathias^-_\varepsilon$. Then, $x_G$ has lower density $0$ and upper density $\varepsilon$ in the generic extension. So one might try to use the same recipe as in Lemma \ref{dominating}, where it was proven that $\mathias^+_\varepsilon$ adds dominating reals. However, condition (ii) from the proof is not satisfied and the following question remains open. 
\begin{question}
Does $\mathias^-_\varepsilon$ add dominating reals?
\end{question}
Note, a positive answer to this question for $\varepsilon=1$ would also be a positive answer to \cite{raghavan2017density}[Question 38] from Raghavan.
\section{Upper density $>0$}

In this section we investigate $\mathias^+$ i.e., the forcing consisting of Mathias conditions $p\subseteq 2^{<\omega}$ such that the corresponding  set of splitting levels $A_p$ has strictly positive upper density.

\begin{definition}
The density zero ideal $\ideal Z$ is defined by:
\[
\ideal Z := \{ A\subseteq \omega \such d^+(A)=0) \},
\]
and the corresponding coideal is denoted by $\ideal Z^+=\{A\subseteq \such d^+(A)>0 \}$.
\end{definition}
Observe that for $p\in \mathias$ we have $p\in \mathias^+$ iff $A_p\in \ideal Z^+ $. \\
We show that the forcing $\mathias^+$ is proper. In order to do this, we need the following Lemma (compare \cite[Lemma 9.6.]{Farah2006}).

\begin{lemma}\label{equivalent}
$\mathias^+$ is forcing equivalent to the two step iteration of $\ideal P(\omega)/\ideal Z^+  * \mathias (\dot{\ideal F})$, where $\dot{\ideal F}$  is a $\ideal P(\omega)/\ideal Z^+$-generic filter. 
\end{lemma}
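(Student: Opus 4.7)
The natural strategy is to exhibit a dense embedding
$$\phi : \mathias^+ \longrightarrow \ideal P(\omega)/\ideal Z^+ * \mathias(\dot{\ideal F}), \qquad (s,A)\longmapsto (A,(\check s,\check A)).$$
This map is well-defined because $A\in \ideal Z^+$ is a legitimate first coordinate, and since $A\Vdash \check A\in\dot{\ideal F}$ the pair $(\check s,\check A)$ names a condition in $\mathias(\dot{\ideal F})$.

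First I would verify order- and incompatibility-preservation. If $(t,B)\leq (s,A)$ in $\mathias^+$, then $B\subseteq A$ gives $B\leq A$ in the quotient, and $s\trianglelefteq t$ together with $t\setminus s\subseteq A$ force $(\check t,\check B)\leq (\check s,\check A)$ below $B$. Incompatibility is just as transparent: two conditions in $\mathias^+$ are incompatible iff either their stems fail to be aligned along a common extension, or $A\cap B\in\ideal Z$, the latter being exactly incompatibility in $\ideal P(\omega)/\ideal Z^+$; in both cases the iteration conditions are incompatible as well.

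The crux is to show the image is dense. Given $(A,\dot q)\in \ideal P(\omega)/\ideal Z^+ * \mathias(\dot{\ideal F})$, I first refine $A$ to decide the finite stem: writing $\dot q=(\dot s,\dot B)$, a density argument on the countably many possible finite sequences yields $A^*\leq A$ and a concrete $s$ with $A^*\Vdash \dot s=\check s$. The next step is the key one: $\dot{\ideal F}$ is (forced to be) a $V$-ultrafilter on $(\ideal P(\omega))^V$, so every element of $\dot{\ideal F}$ is a ground-model set. Hence a further density argument produces $A'\leq A^*$ and some $C\in V$ with $A'\Vdash \dot B=\check C$. From $A'\Vdash \check C\in\dot{\ideal F}$ one reads off $A'\setminus C\in\ideal Z$, so
$$A'' := (A'\cap C)\setminus(\max(s)+1)$$
is almost equal to $A'$ and in particular has positive upper density. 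Thus $(s,A'')\in\mathias^+$ and $\phi(s,A'')\leq (A,\dot q)$ in the iteration, because $A''\leq A$ in the quotient and $A''\Vdash \check{A''}\subseteq \check C=\dot B$.

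The main obstacle is the passage from the abstract name $\dot B$ to a concrete ground-model witness $C$; this hinges on recognising $\dot{\ideal F}$ as a canonical name for a $V$-ultrafilter on $(\ideal P(\omega))^V$, so that every name for a member of the filter can be refined to a check name. Once this is in hand, the verification that $A''$ stays in $\ideal Z^+$ is immediate from $A'\subseteq^* C$, and the three standard checks for a dense embedding yield the claimed forcing equivalence.
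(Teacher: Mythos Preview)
Your proof is correct and follows the same overall route as the paper: both exhibit the map $(s,A)\mapsto([A]_{\ideal Z},(\check s,\check A))$ and argue it is a dense embedding, with order- and incompatibility-preservation handled the same way. The difference lies in the density step. The paper is terser: it takes the stem as already a fixed finite set and, instead of deciding the name $\dot C$ for the infinite part, argues directly that $i((s,A))\leq([A]_{\ideal Z},(s,\dot C))$ from $[A]_{\ideal Z}\Vdash A\setminus\dot C\in\ideal Z$. Your approach of first strengthening the first coordinate to pin down both $\dot s$ and $\dot B$ as check names is more explicit and sidesteps any worry about whether one really gets $A\subseteq\dot C$ (as opposed to merely almost-containment modulo the dual ideal of $\dot{\ideal F}$). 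The one point in your argument worth stating precisely is the appeal to ``every element of $\dot{\ideal F}$ is a ground-model set'': this holds under the natural reading $\dot{\ideal F}=\{A\in\ideal P(\omega)^V:[A]_{\ideal Z}\in\dot G\}$, and that reading is exactly what makes your deciding step for $\dot B$ legitimate.
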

For sake of completeness we sketch a proof.
\begin{proof}
We define a map $i:\mathias^+\longrightarrow P(\omega)/\ideal Z^+ * \mathias (\dot{\ideal F})$ via $i((s,A)):= ([A]_{\ideal Z^+} , (s,A))$, where $[A]_{\ideal Z^+}=\{B\subseteq \omega \such A\Delta B \in \ideal Z \}$ is the corresponding equivalence class.  It is not hard to see that $i$ preserves being stronger and being incompatible. We show that the range of $i$ is dense. To this end, fix a condition $([A]_{\ideal Z^+}, (s,\dot{C}))$. Then $\dot{C}$ is a $ P(\omega)/\ideal Z^+$-name for an infinite subset of $\omega$ such that $[A]_{\ideal Z^+} \Vdash \dot{C} \in \dot{\ideal F}$. This implies $[A]_{\ideal Z^+}  \Vdash A\setminus \dot{C}\in \mathcal{Z}$. So we get $i((s,A)) = ([A]_{\ideal Z^+} , (s,A)) \leq ([A]_{\ideal Z^+} , (s,\dot{C})).$
\end{proof}

\begin{corollary}
$\mathias^+$ is proper.
\end{corollary}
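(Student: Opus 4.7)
The plan is to apply Lemma~\ref{equivalent} together with the standard preservation of properness under two-step iterations. Since $\mathias^+$ is forcing equivalent to $\ideal P(\omega)/\ideal Z^+ * \mathias(\dot{\ideal F})$, it suffices to verify that $\ideal P(\omega)/\ideal Z^+$ is proper in the ground model and that it forces $\mathias(\dot{\ideal F})$ to be proper.

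For the base forcing, I would invoke Farah's classical theorem (see \cite{Farah2006}): the density zero ideal $\ideal Z$ is an analytic P-ideal, realised as the exhaustive ideal of the lower semicontinuous submeasure $\phi(A)=\sup_n |A\cap n|/n$, and for every analytic P-ideal $\ideal I$ the quotient forcing $\ideal P(\omega)/\ideal I$ is proper (indeed $\omega^\omega$-bounding). Rather than reproducing this somewhat delicate argument---which is really the technical heart of the corollary---I would cite it directly.

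For the second factor, in the intermediate extension $\dot{\ideal F}$ is forced to be a filter on $\omega$ (namely the filter generated by the generic object of $\ideal P(\omega)/\ideal Z^+$). Consequently $\mathias(\dot{\ideal F})$ is $\sigma$-centered: any two conditions $(s,A),(s,B)\in\mathias(\dot{\ideal F})$ sharing the stem $s$ admit the common refinement $(s,A\cap B)$, which still lies in $\mathias(\dot{\ideal F})$ by the filter property of $\dot{\ideal F}$. Being $\sigma$-centered, $\mathias(\dot{\ideal F})$ is proper.

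The main obstacle is therefore the properness of the base forcing $\ideal P(\omega)/\ideal Z^+$; the Mathias factor is essentially automatic once one has the filter property of the generic. Combining the two via preservation under iteration and unwinding through Lemma~\ref{equivalent} yields the corollary.
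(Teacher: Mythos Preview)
Your proposal is correct and follows essentially the same route as the paper: decompose via Lemma~\ref{equivalent}, argue that the first factor is proper by citing a Farah result, note the Mathias factor is $\sigma$-centered, and conclude by preservation of properness under two-step iteration. The only minor difference is in the justification for the base step: the paper cites Farah's structural result \cite[Theorem~1.3]{Farah04analytichausdorff} that $\mathcal P(\omega)/\mathcal Z$ is forcing equivalent to $\mathcal P(\omega)/\mathsf{Fin}$ followed by a measure algebra (hence proper), whereas you invoke the more general fact that quotients by analytic $P$-ideals are proper; both are valid, though you may want to double-check that \cite{Farah2006} is the right reference for the latter.
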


\begin{proof}
Let $\mathsf{Fin}$ denote the ideal of finite subsets of $\omega$. In \cite[Theorem 1.3.]{Farah04analytichausdorff} Farah proved that $P(\omega)/\ideal Z^+$ is forcing equivalent to the two step iteration of $\mathcal{P}(\omega)/\mathsf{Fin} $ and a measure algebra of Maharam character $\mathfrak{c}$ and therefore is proper. So, $\mathias^+$ is a finite iteration of proper forcings.
\end{proof}

\begin{theorem}\label{Cohen_zero}
$\mathias^+$ adds Cohen reals.
\end{theorem}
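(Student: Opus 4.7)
The plan is to adapt Proposition~\ref{Cohen} by replacing its single finite partition with a sequence of bit-partitions, one per coordinate of the Cohen real, because $d^+(A_p)$ can be arbitrarily small positive for $p \in \mathias^+$ and no single finite partition of $\omega$ is simultaneously fine enough for every condition. For each $k \in \omega$ let
\[
a^k_0 := \{n \in \omega : \text{bit } k \text{ of } n \text{ is } 0\}, \qquad a^k_1 := \omega \setminus a^k_0.
\]
Each piece has density $1/2$, and the bit-partitions are jointly independent: for every finite $F \subseteq \omega$ and every $\eps \in 2^F$, the set $\bigcap_{k \in F} a^k_{\eps(k)}$ has density $2^{-|F|}$.

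Enumerate the generic real as $x_G = \{n_0 < n_1 < \dots\}$ and define a $\mathias^+$-name $\dot c$ for an element of $2^\omega$ by setting $\dot c(k) = 0$ iff $n_{2k}$ and $n_{2k+1}$ lie in the same piece of $\omega = a^k_0 \sqcup a^k_1$, and $\dot c(k) = 1$ otherwise. The key lemma is: for every $A \in \ideal Z^+$ there is $K(A) \in \omega$ such that for all $k \geq K(A)$ both $A \cap a^k_0$ and $A \cap a^k_1$ are infinite. The proof is a counting argument: if $A \subseteq^* a^{k_i}_{\eps_i}$ for $M$ distinct bits $k_1, \dots, k_M$, then $A$ is almost contained in a set of density $2^{-M}$, so $d^+(A) \leq 2^{-M}$; hence the number of bad bits is at most $\log_2(1/d^+(A))$, and $K(A)$ may be taken as one more than the largest bad bit.

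To verify that $\dot c$ is Cohen, given $(s, A) \in \mathias^+$ and a dense open $D \subseteq 2^{<\omega}$ in $V$, first append the smallest elements of $A$ to $s$ in order until the new stem $s'$ has size at least $2K(A)$; this yields a condition $(s', A')$ that decides an initial segment $s_0$ of $\dot c$ of length $\geq K(A)$, and since only finitely many elements of $A$ were discarded, $d^+(A') = d^+(A)$ and $K(A') = K(A)$. Choose $u \in 2^{<\omega}$ with $s_0 {}^\conc u \in D$, and inductively extend $s'$ by two further elements of $A'$ for each coordinate $i < |u|$, picking both in $A' \cap a^{K(A)+i}_0$ if $u(i) = 0$ or one from each piece if $u(i) = 1$; both intersections are infinite by the key lemma, so the induction never breaks down, and the final condition forces $\dot c \supseteq s_0 {}^\conc u \in D$. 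The main obstacle is precisely the absence of a uniform positive lower bound on $d^+(A_p)$ for $p \in \mathias^+$, which rules out a direct copy of Proposition~\ref{Cohen}; varying the partition with the coordinate $k$ and exploiting the independence of bit-partitions is what rescues the argument.
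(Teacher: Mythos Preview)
Your argument is correct, modulo two cosmetic fixes: you should choose the two new elements from $A' \cap a^{|s_0|+i}_j$ rather than $A' \cap a^{K(A)+i}_j$, since the coordinate of $\dot c$ being decided at step $i$ is $|s_0|+i$ (and $|s_0|$ may exceed $K(A)$ if the original stem $s$ was already long); and you should arrange that $|s'|$ is even so that the pairing $(n_{2k},n_{2k+1})$ lines up with the coordinates of $\dot c$. With these adjustments the key lemma still applies, since $|s_0|+i \geq K(A)$.

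Your route differs substantially from the paper's. The paper does not fix the partition in advance by the coordinate $k$; instead it uses the refining partitions $\mathcal A_m$ into residue classes modulo $2^{m+1}$, and lets the \emph{next element} $m_{i+1}$ of the generic real select which partition to consult. The $i$-th Cohen bit then records whether the entire tail $x_G \setminus (m_{i+1}+1)$ already lies in the unique piece of $\mathcal A_{m_{i+1}}$ that almost contains $x_G$. Thus the paper's coding $x_G \mapsto c$ reads a tail property and is not continuous, while yours reads only $n_{2k},n_{2k+1}$ and is continuous on the comeager set of infinite $x$. This is exactly the obstruction the paper runs into in Section~\ref{s6}, where the non-continuity of its $\varphi$ prevents a direct application of Proposition~\ref{gamma p implies gamma c} and forces the weaker Corollary~\ref{all_upper_all_baire}. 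Your construction is also closer in spirit to the $\mathias^+_\varepsilon$ argument of Proposition~\ref{Cohen} and Theorem~\ref{mathais_epsilon_implies_cohen}.
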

The Theorem follows from \cite[Lemma 9.8.]{Farah2006} and the fact that the coideal $\ideal Z^+$ is not semiselective (compare \cite[Definition 2.1.]{FarahSemiselective}). However, since we will make use of the explicit construction of the Cohen real, we also give a proof. We freely identify sequences $x\in 2^{\leq \omega}$ with their corresponding sets of natural numbers $x\in[\omega]^{\leq \omega}$ via $x \mapsto \{ n\such x(n)=1\}$.
\begin{proof}
We define maximal antichains $\mathcal{A}_n$ in $( \ideal Z^+, \subseteq ^*)$ as follows: For $n \in \omega$ let $A^i_n:= \{ k\in \omega \such k = i \mod 2^{(n+1)}\}$ and put $\mathcal{A}_n :=\{ A^i_n \such i< 2^{(n+1)}  \}$, e.g.  $\ideal A_0$ consists of the even and odd numbers. Let $x_G \in 2^\omega$ be a $\mathias^+$-generic real. Then, in the generic extension $V[x_G]$, there is for each $n\in \omega$ exactly one $i_n<2^{(n+1)}$ such that $x_G\subseteq^* A^{i_n}_n$. To simplify notations we denote with $A_n$ this unique $A^{i_n}_n$. We define two sequences $\langle n_i \such i\in \omega \rangle$ and $\langle m_i \such i\in \omega\setminus \{0 \} \rangle$ as follows: We start with $n_0:= \min(x_G)$. When $n_i$ is known, we put $m_{i+1}:= \min(x_G\setminus(n_i + 1))$. To define $n_{i+1}$ we differentiate two cases: If $x_G\setminus(m_{i+1} + 1)\subseteq A_{m_{i+1}}$ we put $n_{i+1}:= m_{i+1}$. Otherwise we put $n_{i+1}:= \min(x_G\setminus((m_{i+1} + 1)\cup A_{m_{i+1}})).$ Observe that by definition we have $m_i \leq n_i$.  We put
\[
c(i):= \begin{cases}  0 , &\text{ if } x_G\setminus(m_{i+1} + 1)\subseteq A_{m_{i+1}} \\
 1 , &\text{ else.}
\end{cases}
\]
We show that $c$ is Cohen. For this purpose, fix a condition $(s,A)\in \mathias^+$. By density we can assume that there is $i<\omega$ such that $m_i,n_i$ and  $c\restric i$ are decided by $(s,A)$ but none of the values of $m_{i+1},n_{i+1}$ nor $c(i)$. We must have $\max(s)=n_i$. Fix $j\in 2$. It is enough to find $(t,B)\leq (s,A)$ such that $(t,B)\Vdash c(i) = j$ and in addition $(t,B)$ does not decide $m_{i+2},n_{i+2}$ or  $c(i+1)$. Depending on the value of $j$ we distinguish two cases:
\begin{enumerate}
\item[$j=0$:] Let $m:=\min(A), t:= s \cup \{m \}$ and $B:= A_m \cap  A\setminus (m+1)$. 
\item[$j=1$:] Find $m\in A$ such that $A\not\subseteq A_m$ (such an $m$ always exists since $A$ has positive upper density and $A_m$ has density $2^{-(m+1)}$). Next, pick $n\in A\setminus A_m$ and put $t := s\cup \{m \} \cup \{ n\}$. Finally, define $B:= A \setminus (n+1)$.
\end{enumerate}
In both cases we have $\max(t)=n_{i+1}, (t,B) $ decides the value of $c(i)$ to be $j$ but neither does it decide $m_{i+2},n_{i+2}$ and nor $c(i+1)$.
\end{proof}

Next, we show that $\mathias^+$ adds dominating reals. 

\begin{theorem}
$\mathias^+$ adds dominating reals.
\end{theorem}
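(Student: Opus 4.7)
The plan is to exploit the factorization from Lemma~\ref{equivalent}, namely $\mathias^+\equiv \mathcal P(\omega)/\ideal Z^+\ast \mathias(\dot{\ideal F})$, and to locate the dominating real in the second iterand. First, one observes that $\mathcal P(\omega)/\ideal Z^+$ by itself adds no dominating real over $V$: by Farah's decomposition $\mathcal P(\omega)/\ideal Z^+\equiv \mathcal P(\omega)/\mathsf{Fin}\ast \mathbb B$ \cite{Farah04analytichausdorff}, the first factor is $\sigma$-closed (hence preserves reals) and the second is a measure algebra (hence $\omega^\omega$-bounding). So any dominating real added by $\mathias^+$ must come from the Mathias-with-filter step $\mathias(\dot{\ideal F})$.

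The key claim is then that $\dot{\ideal F}$ is not a P-point in the intermediate model $V[\mathcal P(\omega)/\ideal Z^+]$. For this I would recycle the partitions $\mathcal A_n=\{A^i_n \such i<2^{n+1}\}$ already used in the proof of Theorem~\ref{Cohen_zero}, where $A^i_n=\{k \such k\equiv i\pmod{2^{n+1}}\}$. Since $\dot{\ideal F}$ is an ultrafilter on the coideal $\ideal Z^+$, it contains exactly one $A^{i_n}_n$ from each partition $\mathcal A_n$, and these sets form a $\subseteq^*$-decreasing sequence with $d^+(A^{i_n}_n)=2^{-(n+1)}\to 0$. Any pseudointersection $X\subseteq^* A^{i_n}_n$ for every $n$ would satisfy $d^+(X)\leq 2^{-(n+1)}$ for all $n$, forcing $d^+(X)=0$; so $X\in \ideal Z$ and $X\notin \dot{\ideal F}$. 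Thus $\{A^{i_n}_n\}_{n<\omega}$ admits no pseudointersection inside $\dot{\ideal F}$, which witnesses that $\dot{\ideal F}$ fails to be a P-point.

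By Canjar's theorem \cite{canjar:mathias}, together with the subsequent characterization (due to Hru\v{s}\'ak and Minami) that an ultrafilter is Canjar precisely when it is a P-point, it follows that $\dot{\ideal F}$ is non-Canjar and hence $\mathias(\dot{\ideal F})$ adds a dominating real over the intermediate model; since domination transfers downward, the same real dominates every function of $V$, and therefore $\mathias^+$ adds a dominating real. The main obstacle is the careful invocation of the two cited characterizations and the bookkeeping needed to track ``adds a dominating real'' through the two-step iteration; alternatively, a self-contained direct argument could extract an explicit dominating real from the witnessing chain $\{A^{i_n}_n\}$, for instance via a function measuring how quickly the Mathias generic $x_G$ settles inside each $A^{i_n}_n$.
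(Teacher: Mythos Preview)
Your approach via the factorization of Lemma~\ref{equivalent} matches the paper's, but the second step diverges and contains a misstatement. You claim Hru\v{s}\'ak--Minami characterize Canjar ultrafilters as exactly the P-points; this is false---the correct characterization is that Canjar ultrafilters are exactly the \emph{strong} P-points, a strictly smaller class. What you actually need is only the implication ``non-P-point $\Rightarrow$ non-Canjar'', and this does hold (strong P-points are P-points), so your argument survives once the ``precisely when'' is weakened to a one-sided implication. Your first paragraph, showing that $\mathcal P(\omega)/\ideal Z^+$ adds no dominating real, is also superfluous: if $\mathias(\dot{\ideal F})$ adds a real dominating the intermediate model, that real a fortiori dominates $V\cap\omega^\omega$.

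The paper takes a shorter path through the same factorization. Rather than exhibiting a decreasing chain with no pseudointersection to show $\dot{\ideal F}$ is not a P-point, it simply observes that $\dot{\ideal F}$ extends $(\ideal Z^*)^V$: any $A$ with $\omega\setminus A\in\ideal Z$ satisfies $[A]=[\omega]$ in $\mathcal P(\omega)/\ideal Z^+$, so $A$ lies in the generic filter. It then quotes \cite[Corollary~3]{HRUSAK2014880}, which says that Mathias forcing with any filter extending $\ideal Z^*$ adds a dominating real. Your explicit non-P-point witness via the partitions $\{A^{i_n}_n\}_n$ is more hands-on and makes visible \emph{why} the filter is combinatorially bad, but it reaches the same cited source by a longer route.
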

\begin{proof}
Let $\ideal Z^*=\{A\subseteq \omega \such \omega \setminus A \in \ideal Z \}$ be the dual filter of the density zero ideal. In  \cite[Corollary 3]{HRUSAK2014880} Hru\v{s}ak and Minami showed that $\mathias (\ideal F)$ adds dominating reals, whenever $\ideal F$ is a filter extending $\ideal Z^*$. We use Lemma \ref{equivalent}. Let $\dot{\ideal F}$ denote the $P(\omega)/\ideal Z^+$-generic filter. Then, in the generic extension $V[\dot{\ideal F}]$ the filter $\dot{\ideal F}$ extends $(\ideal Z^*)^V$ and therefore $\mathias (\dot{\ideal F})$ adds dominating reals over $V$. Specifically, $\mathias^+$ adds dominating reals over $V$.
\end{proof}

\section{Positive lower density}
\label{s5}

In this section we show that $\silver^-$ collapses the continuum to $\omega$. We also construct an uncountable antichain in the partial order consisting of sets with strictly positive lower density ordered by inclusion. 

\begin{theorem}\label{collapse} The forcing $\silver^-$ collapses the continuum to $\omega$.
\end{theorem}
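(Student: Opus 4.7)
The plan is to produce, in $V^{\silver^-}$, a surjection from $\omega$ onto $(2^\omega)^V$ via a single $\silver^-$-name $\dot h$. This will immediately establish the collapse, since such a surjection makes $(2^\omega)^V$ countable in $V[G]$.

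The key technical ingredient I would establish first is a flexibility lemma for $\silver^-$: given any condition $p = f_p \colon \omega \setminus A_p \to 2$ in $\silver^-$ and any $y \in (2^\omega)^V$, one can plant $y$ into the generic along a density-zero sliver of $A_p$. Concretely, enumerate $A_p$ increasingly as $a_0 < a_1 < \dots$, let $B := \{a_{2^k} : k < \omega\}$ (so $d^+(B) = 0$, since $|B \cap N| \le \lfloor \log_2 N \rfloor + 1$), let $\sigma_B \colon \omega \to B$ be the increasing enumeration, and set $q$ to be the extension of $p$ with $\dom(f_q) = \dom(f_p) \cup B$ and $f_q(\sigma_B(k)) = y(k)$ for $k < \omega$. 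Then $A_q = A_p \setminus B$ satisfies $d^-(A_q) = d^-(A_p) > 0$, so $q \in \silver^-$, and $q \force f_G \circ \sigma_B = y$. Note this maneuver is unavailable in $\silver^+_\varepsilon$, where removing even density-zero pieces can violate the uniform upper-density bound; this is precisely the structural difference explaining why $\silver^+_\varepsilon$ is proper while $\silver^-$ is not.

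Next I would encode this flexibility into a single name by defining $\dot h(n) := f_G \circ \sigma_{\dot B_n}$, where $\langle \dot B_n : n < \omega \rangle$ is a countable system of $\silver^-$-names for subsets of $\omega$ of upper density zero, chosen so that every concrete ground-model $B \subseteq A_p$ of density zero can be forced to equal $\dot B_n^G$ by an appropriate extension of $p$. With such a name system in place, for each $y \in (2^\omega)^V$ the set
\[
D_y = \bigl\{q \in \silver^- : (\exists n < \omega)\ q \force \dot h(n) = y\bigr\}
\]
is dense by the flexibility lemma, and a standard genericity argument then yields $(2^\omega)^V \subseteq \{\dot h(n)^G : n < \omega\}$ in $V[G]$.

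The main obstacle is designing the name family $\langle \dot B_n \rangle$: it must be countable but rich enough to realize every relevant density-zero target $B$ below every condition. I would build it by a fusion-style enumeration of nice names, exploiting the fact that $\silver^-$ imposes no global lower bound on the upper or lower density of $A_p$, so arbitrary ground-model density-zero subsets can be pinned as values of $\dot B_n^G$ by appropriate extensions. Once this bookkeeping is arranged, density of each $D_y$ is immediate from the flexibility lemma and the collapse follows.
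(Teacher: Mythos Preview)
Your flexibility lemma is correct, but it is not specific to $\silver^-$: removing a set $B$ with $d^+(B)=0$ from $A_p$ preserves both lower and upper density, so exactly the same argument produces $q\in\silver^+_\varepsilon$ whenever $p\in\silver^+_\varepsilon$. Your stated reason why the maneuver fails for $\silver^+_\varepsilon$ (``removing even density-zero pieces can violate the uniform upper-density bound'') is simply false. Since $\silver^+_\varepsilon$ is proper (Lemma~\ref{properandbounding}) and hence does not collapse, the flexibility lemma by itself cannot yield the collapse; whatever is supposed to happen in your second step must be where $\silver^-$ and $\silver^+_\varepsilon$ diverge, and you have not isolated that.

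The actual gap is the construction of the names $\langle\dot B_n\rangle$. You need, for every $p$ and every $y$, some $n$ and some $q\leq p$ with $q\Vdash x_G\circ\sigma_{\dot B_n}=y$. But in your flexibility lemma the set $B$ is read off from the full enumeration of $A_p$, which is continuum-sized data with no countable invariant attached; there is no evident way to realise all these $B$'s as values of a countable family of names, and ``fusion-style enumeration'' does not address this. The paper's proof avoids the problem entirely by a different mechanism: it reads $\rho(n)$ from the \emph{parity} of (a scaled) count $|x_G^{-1}\{1\}\cap\ell(k,k_0,n)|$, so the recipe depends only on the pair $(k,k_0)\in\omega\times\omega$, and $(k,k_0)$ is chosen from $d^-(A_p)$. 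The required flexibility---adjusting that count to hit the desired parity---costs a \emph{positive fraction} of $A_p$ on each block (density drops from $\geq 2/k$ to $\geq 1/(rk)$), and it is precisely this positive-proportion loss, tolerable in $\silver^-$ but not in $\silver^+_\varepsilon$, that makes the argument go through for one forcing and not the other.
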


We first prove a lemma:
 
 \begin{lemma}
 There is a strictly increasing function $\ell\colon \omega \times \omega \times\omega\cup\{-1 \} \to \omega$, a small natural number $r$ and a $\silver^-$-name $\dot{F}$  for a function $F \colon \omega \times \omega \times \omega  \to 2$ in $V[G]$ with the following property:
 \begin{equation}\label{collapserequ}
 \begin{split}
&( \forall \varrho \in 2^\omega) 
(\forall p \in \silver^-) 
(\forall k \in \omega \setminus \{0\})(\forall k_0 \in \omega)\\
&
\Bigl( \bigl(d^-(A_p) \geq \frac{2}{k} \wedge \forall n \geq k_0 
\frac{|A_p \cap [\ell(k,k_0,n),\ell(k,k_0,n+1))|}{\ell(k,k_0,n+1)-\ell(k,k_0,n)} \geq \frac{1}{k}\bigr) \rightarrow \\
& (\exists q_\varrho \leq p )\bigl( d^-(A_{q_\varrho}) \geq \frac{1}{r k} \wedge 
q_\varrho \Vdash (\forall n \in \omega) \dot{F}(k,k_0,n) = \varrho(n)\bigr) \Bigr)
 \end{split}
\end{equation}
\end{lemma}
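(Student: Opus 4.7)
The strategy is to define $\ell$ and $\dot{F}$ uniformly so that each output bit $F(k,k_0,n)$ is the parity of the generic $G$ over one prescribed block of the $(k,k_0)$-partition of $\omega$, and then to code $\varrho$ by shrinking $p$ on alternate ``working'' blocks while leaving the complementary ``reserve'' blocks untouched to preserve a positive lower density.

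First, set $\ell(k,k_0,-1):=0$ and $\ell(k,k_0,n):=(n+1)k$ for $n\geq 0$, so that the blocks $B_n:=[\ell(k,k_0,n),\ell(k,k_0,n+1))$ all have the uniform length $k$. Define the $\silver^-$-name $\dot{F}$ by
\[
F(k,k_0,n)\;:=\;\sum_{j\in B_{k_0+2n}} G(j)\;\pmod 2,
\]
where $G$ is the canonical name for the generic element of $2^\omega$. Each working block $B_{k_0+2n}$ has index $\geq k_0$, so the block-density hypothesis applies and gives $|A_p\cap B_{k_0+2n}|\geq 1$. Given $p,k,k_0,\varrho$ satisfying the hypothesis, I construct $q_\varrho\leq p$ by modifying $p$ only on the working blocks: for each $n\geq 0$, fix every free position of $p$ inside $B_{k_0+2n}$ to a value so that $\sum_{j\in B_{k_0+2n}} f_{q_\varrho}(j)\equiv \varrho(n)\pmod 2$. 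Since at least one free bit is available, this single bit of freedom is enough to realise either target parity. On all remaining blocks --- the initial blocks $B_{-1},\dots,B_{k_0-1}$ and the reserve blocks $B_{k_0+2n+1}$ for $n\geq 0$ --- keep every free position of $p$ free in $q_\varrho$. Thus $A_{q_\varrho}=A_p\setminus\bigcup_{n\geq 0}B_{k_0+2n}$.

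The two verifications are then short. (i) For the forcing clause: every position of $B_{k_0+2n}$ is decided by $q_\varrho$, so $G$ is fully determined there and the chosen fixed values give the correct parity, i.e.\ $q_\varrho\Vdash F(k,k_0,n)=\varrho(n)$. (ii) For the density clause with $r:=2$: each reserve block $B_{k_0+2j+1}$ contributes at least one element to $A_{q_\varrho}$, and for $M_m:=(k_0+2m+1)k$ the interval $[0,M_m)$ contains the $m$ reserve blocks $B_{k_0+1},B_{k_0+3},\dots,B_{k_0+2m-1}$, so
\[
\frac{|A_{q_\varrho}\cap[0,M_m)|}{M_m}\;\geq\;\frac{m}{(k_0+2m+1)k}\;\longrightarrow\;\frac{1}{2k}.
\]
For arbitrary large $M$ the numerator differs from its value at the nearest $M_m$ by $O(1)$ and the denominator by at most a factor $1+O(1/m)$, so $\liminf_{M}|A_{q_\varrho}\cap [0,M)|/M\geq 1/(2k)$, which gives both $q_\varrho\in\silver^-$ and the required bound. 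The only mildly delicate step is this last interpolation; it is clean here because all blocks have uniform length $k$, and indeed uniform block length is precisely what keeps $r$ an absolute constant independent of $p,k$, and $k_0$.
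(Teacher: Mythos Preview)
Your argument is internally correct: with $\ell(k,k_0,n)=(n+1)k$, block-parity $F$, and the working/reserve alternation, the forcing clause and the lower-density bound $d^-(A_{q_\varrho})\geq 1/(2k)$ both go through as you claim. So the lemma, read as a bare existence statement, is proved.

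The route is genuinely different from the paper's. The paper takes \emph{geometrically} growing blocks $\ell_{n+1}=mk\cdot\ell_n$ and defines $F$ via the parity of a rounded cumulative density of $1$'s in the generic up to $\ell_n$; the construction of $q_\varrho$ then removes only a fixed proportion $\frac{r-1}{r}$ of the free positions in each block, never emptying a block completely. Your construction is more elementary: fixed-length blocks, a local parity on a single block, and a clean ``kill the working blocks, keep the reserve blocks'' dichotomy.

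There is, however, a real cost. With your arithmetic $\ell$, the block-density hypothesis says that every interval $[(n+1)k,(n+2)k)$ with $n\geq k_0$ meets $A_p$, i.e.\ the gaps of $A_p$ are eventually of length $<2k$. Positive lower density does \emph{not} entail bounded gaps: for instance $A_p=\omega\setminus\bigcup_j[2^j,2^j+j)$ has $d^-(A_p)=1$ yet gaps of unbounded length, so for this $p$ no pair $(k,k_0)$ satisfies your hypothesis and the implication is vacuous. The lemma's sole purpose is the subsequent theorem that $\silver^-$ collapses the continuum, whose proof needs that \emph{every} $p\in\silver^-$ admits some $(k,k_0)$ meeting the hypothesis. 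The paper's geometric growth buys exactly this: once $d^-(A_p)\geq 2/k$, the inequality $|A_p\cap[\ell_n,\ell_{n+1})|\geq \frac{1}{k}(\ell_{n+1}-\ell_n)$ follows for large $n$ because $\ell_{n+1}$ swamps $\ell_n$. Your $\ell$ does not have this feature, so the lemma as you prove it cannot feed into the collapse theorem. (Your closing remark that uniform block length is what keeps $r$ absolute is not the issue --- the paper's $r=4$ is equally absolute.)
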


\begin{proof} $r$, $z_1$, $z_2$ and $m$ will be arranged later. The function $\ell$ is defined recursively by 
\begin{center}
$\ell(k,k_0,n) = \begin{cases} 0, &\text{ if } n= -1,\\
z_1k(k_0+1) , &\text{ if } n = 0,\\
mk\cdot\ell(k,k_0,n-1), &\text{ if } n>0.
\end{cases}
$
\end{center}
Although the function $\ell$ in fact depends on three variables it makes sense to abbreviate it when the values $k$ and $k_0$ are clear from the context. In such a situation we also write $\ell_n$ instead of $\ell(k,k_0,n)$.\\
We let $x_G$ be the generic branch and define 
\[
f(k,k_0,n) = \mbox{ the closest natural number to }\frac{|x_G^{-1}[\{1\}] \cap \ell_n|}{\ell_n} \cdot \frac{z_1 k}{z_2}.
\]
We define in $V[G]$ the following function:
\begin{equation*}
F(k,k_0,n) = \begin{cases} 0, &\mbox{ if } f(k,k_0,n)  \mbox{ is even;}\\
1, &\mbox{ else.}
\end{cases}
\end{equation*}
So, assume that we are given a condition $p\in \silver^-$ that meets the  premise of the implication (\ref{collapserequ}) for $k$ and $k_0$ i.e., $d^-(A_p) \geq \frac{2}{k}$ and $(\forall n \geq k_0) \frac{|A_p \cap [\ell_n,\ell_{n+1})|}{\ell_{n+1}-\ell_n} \geq \frac{1}{k}$. Let $\varrho \in 2^\omega$ be given. Recall any Silver condition $p$ is uniquely described by a function $f_p \colon  \omega \setminus A_p \to 2$.
By induction on $n< \omega$ we define an increasing sequence of partial functions $f_p = f_{q_{-1}} \subseteq f_{q_0} \subseteq f_{q_1} \subseteq \dots$ such that for all $n \geq 0$
\begin{enumerate}
\item $f_{q_n} \restric \ell_{n-1}=f_{q_{n-1}}\restric \ell_{n-1}$,
\item $f_{q_n} \restric [\ell_{n-1},\ell_n) \supseteq f_{q_{n-1}}\restric [\ell_{n-1},\ell_n),$
\item $f_{q_n} \restric [\ell_n, \infty ) = f_{p} \restric [\ell_{n},\infty),$
\end{enumerate}

From the three conditions above it already follows that each corresponding  tree $q_n$ will be a member of $\silver^-$. Additionally, we make sure that the following holds as well for $n\geq 0$:

\begin{enumerate}
\item[(4)] $\frac{|A_{q_n} \cap [\ell_{n-1},\ell_{n})|}{\ell_{n}-\ell_{n-1}}  \geq \frac{1}{rk}$
\item[(5)] $q_n \force \forall m \leq n F(k,k_0,m) = \varrho(m).$
\end{enumerate}

The conditions $(1) - (4)$ together make sure that the decreasing sequence $\la q_n \such n<\omega \ra$ has a lower bound in $\silver^-$, namely $q:= \bigcap_n q_n$. Condition $(5)$ ensures that $q \force \forall n \in \omega F(k,k_0, n) = \varrho (n)$. Thus, we can set $q_\varrho := q $ and are done.

Now for the step from $n-1$ to $n$:
We have 
\[
\frac{|A_p \cap [\ell_{n-1}, \ell_{n})|}{\ell_{n}-\ell_{n-1}} \geq \frac{1}{k}.
\]
This means we can add at most $ \frac{(r-1)}{r}\cdot \frac{1}{k} \cdot (\ell_{n}-\ell_{n-1})$ elements of $[\ell_{n-1}, \ell_{n}) \cap A_p$ to $\dom(f_{q_n})$ and still make sure that condition $(4)$ is met. We will later choose which of them are mapped to 0 and which are mapped to 1 by $f_{q_n}$.

We define two approximations to $f$ and $F$ respectively, which do not depend on the generic element $x_G$. We let
\[
f(k,k_0,n,p) = \mbox{ the closest natural number to }\frac{|f_p^{-1}[\{1\}] \cap \ell_n|}{\ell_n} \cdot \frac{z_1 k}{z_2},
\]

and
\begin{equation*}
F(k,k_0,n,p) = \begin{cases} 0, &\mbox{ if } f(k,k_0,n,p)  \mbox{ is even;}\\
1, &\mbox{ else.}
\end{cases}
\end{equation*}

Now at most $\frac{z_2}{z_1}\cdot \frac{\ell_n}{k}$ many new values are needed to change $f(k_0, k,n,p)$ to $f(k_0, k,n,q) $ such that the quotient 
\[\frac{|f_p^{-1}[\{1\}] \cap \ell_n|}{\ell_n} \cdot \frac{z_1 k}{z_2} \in \omega
\]
and such that $F(k_0, k,n,q) $ coincides with $\varrho(n)$. However, we have to be careful not to contradict condition $(4)$. Especially, the following must hold:
\begin{equation*}
\frac{z_2}{z_1}\cdot \frac{\ell_n}{k} \leq  \frac{(r-1)}{r} \cdot \frac{1}{k} \cdot (\ell_{n}-\ell_{n-1})= \frac{(r-1)}{r} \cdot \frac{1}{k} \cdot \frac{(mk - 1)}{mk} \cdot \ell_n
\end{equation*}

On the other hand, we need to ensure that $q_n$ decides $F(k,k_0,n)$ to be $F(k_0,k,n,q_n)$. By construction and in particular condition  $(4)$, we get that the amount of digits that are in $\ell_{n} \setminus \dom(f_{q_n})$ is $\frac{\ell_{n}}{rk}$.
Hence we need 
\begin{equation*}
\frac{z_2}{z_1}\cdot\frac{\ell_{n}}{k} > 2 \cdot \frac{\ell_{n}}{rk}
\end{equation*}
Both inequalities are true if we have:
\begin{equation*}
2 \cdot \frac{1}{r} < \frac{z_2}{z_1}\cdot\frac{\ell_{n}}{k} \leq \frac{(r-1)}{r} \cdot \frac{(mk - 1)}{mk}. 
\end{equation*}
We can take, e.g.,  $r=m=4$, $z_1 = 3$ and $z_2 = 2$ and get $\frac{1}{2} < \frac{2}{3} \leq \frac{3}{4} \cdot \frac{15}{16}$.
\end{proof}
  
\begin{proof}[Proof of the Theorem]
Now it is easy to see that 
\[ \Vdash_{\silver^-} (\forall \varrho \in V \cap 2^\omega )(\exists k>0) (\exists k_0) 
(\forall n F(k,k_0,n) = \varrho	(n)).\]
Simply fix $p$ and $\varrho$. Compute $k$ and $k_0$ for $p$ such that the  prerequirement of the implication (\ref{collapserequ}) are fulfilled. This is always possible since $d^-(A_p)>0$. Then construct $q_\varrho \leq p$ as in the lemma above.

Hence, $\Vdash_{\silver^-} (k,k_0) \mapsto F(k,k_0, \cdot)$ is a surjection from $\omega \times \omega $ onto $2^\omega \cap V$.

\end{proof}

\subsection{$\mathias^-$ has large antichains}

The following lemma establishes that below any condition $p\in \mathias^-$ there is an antichain of size continuum.
\begin{proposition}\label{continuum_antichains}
There is a family of sets $\{A_f \subseteq \omega \such  f\in I \}$ such that
\begin{enumerate}[]
\item $I \subseteq 3^\omega$ has size continuum,
\item $d^-(A_f) \geq 1/2 $, for all $f\in I$.
\item $d^-(A_f\cap A_g ) = 0, $ for all $f\neq g $ in $I$. 
\end{enumerate} 
\end{proposition}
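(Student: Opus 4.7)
I will construct the family $\{A_f : f \in I\}$ by a block decomposition of $\omega$ into rapidly growing intervals, so that earlier blocks become negligible relative to later ones. Fix a strictly increasing sequence $(a_n)_{n\in\omega}$ with $a_0=0$ and $\lim_n a_n/a_{n+1}=0$; for concreteness take $a_n=n!$, and let $I_n:=[a_n,a_{n+1})$. Choose a surjection $\pi\colon 3\to 2$. For each $f$ in a family $I\subseteq 3^\omega$ of size continuum (specified below), declare $A_f\cap I_n$ to be the even-indexed elements of $I_n$ if $\pi(f(n))=0$ and the odd-indexed elements of $I_n$ if $\pi(f(n))=1$. Thus $|A_f\cap I_n|=\lfloor|I_n|/2\rfloor$ and the chosen half is spread uniformly throughout $I_n$.

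For the index family $I$, fix an almost disjoint family $\{B_\alpha:\alpha<2^\omega\}$ of infinite subsets of $\omega$, set $f_\alpha(n):=\chi_{B_\alpha}(n)\in\{0,1\}\subseteq 3$, and let $I:=\{f_\alpha:\alpha<2^\omega\}$. For $\alpha\neq\beta$, $B_\alpha\setminus B_\beta$ is infinite, hence $\{n:\pi(f_\alpha(n))\neq\pi(f_\beta(n))\}=\{n:f_\alpha(n)\neq f_\beta(n)\}$ is infinite, which is exactly the property needed below.

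Item (1) is immediate from the construction. For (2), each prefix of $I_n$ of length $\ell$ contains $\lfloor\ell/2\rfloor$ elements of $A_f$; summing over blocks gives $|A_f\cap m|\geq m/2 - O(1)$ for every $m$, so $d^-(A_f)\geq 1/2$. For (3), given $f\neq g$ in $I$ and any $n$ with $\pi(f(n))\neq\pi(g(n))$, the halves inside $I_n$ are complementary, so $A_f\cap A_g\cap I_n=\emptyset$; therefore $|A_f\cap A_g\cap a_{n+1}|=|A_f\cap A_g\cap a_n|\leq a_n$, giving density at $a_{n+1}$ at most $a_n/a_{n+1}\to 0$ along the infinite set of such $n$. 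Hence $d^-(A_f\cap A_g)=0$.

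\textbf{Main obstacle.} The delicate point is securing $d^-(A_f)\geq 1/2$ rather than merely $d^+(A_f)\geq 1/2$. A naive choice such as letting $A_f\cap I_n$ be a contiguous half of $I_n$ would fail: when $a_{n+1}\gg a_n$ the running density $|A_f\cap m|/m$ swings wildly, dipping near $0$ just past the start of $I_n$ if the right half is taken. Using interleaved halves (even- versus odd-indexed positions within each block) forces every prefix of each block to be half-covered, pinning the running density uniformly close to $1/2$; this is also precisely what makes $A_f\cap A_g\cap I_n$ empty on the infinitely many blocks where $f,g$ choose opposite halves.
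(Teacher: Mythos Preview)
Your argument is correct. One small imprecision: the bound ``$|A_f\cap m|\geq m/2-O(1)$'' is not literally true---each completed block contributes a rounding error of at most $1/2$, so the cumulative deficit at $m\in I_n$ is of order $n$, not bounded. Since $n=o(a_n)$ (trivially for $a_n=n!$), this still gives $|A_f\cap m|/m\to 1/2$ and hence $d^-(A_f)\geq 1/2$, so the conclusion stands.

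Your route is genuinely different from, and simpler than, the paper's. The paper builds \emph{three} auxiliary sets $B_0,B_1,B_2$ on a rapidly growing block structure with the properties $d^-(B_i\cup B_j)\geq 1/2$ for $i\neq j$ and $d^-(B_i)=0$; then $A_f$ is obtained by, on block $n$, taking the union of the two $B$'s whose index is \emph{not} $f(n)$. When $f(n)\neq g(n)$, the intersection $A_f\cap A_g$ on that block reduces to a single $B_i$, which has vanishing density there. This genuinely uses three colours and explains the ``$I\subseteq 3^\omega$'' in the statement. Your construction uses only two colours (even/odd positions within each block) and an almost disjoint family to guarantee infinitely many disagreements; you then embed $I\subseteq 2^\omega\subseteq 3^\omega$. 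What you gain is transparency: the lower-density estimate is immediate from the interleaving, and the disjointness on disagreement blocks is literal rather than an asymptotic density statement. What the paper's three-set design buys is a direct fit with the $3^\omega$ framing and perhaps a template that generalises to more than two ``halves'', but for the proposition as stated your two-colour argument suffices and is cleaner.
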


\begin{proof}
First, we fix a suitable set of indices $I \subseteq 3^\omega$. Therefore pick any family of functions $I$ of size continuum with the property, that for any two different functions $f,g \in I$ there are infinitely many $n\in \omega$ $(f(n)\neq g(n))$. In order to define the sets $A_f$ we will define three auxiliary sets $B_0, B_1, B_2 \subseteq \omega$ and a sequence $\la k^n_{j} \such n\in\omega,j \in  3 \cup \{-1\} \ra$ such that
\begin{enumerate}[i)]
\item $k^{n}_{-1} < k^n_{0}< k^n_{1}< k^n_2 = k^{n+1}_{-1}$, for $n<\omega$,
\item $\frac{k^n_{j-1}}{k^n_j} < 2^{-n}$, for $j < 3, n < \omega$,
\item $d^-(B_i \cup B_j) \geq 1/2$, for $i\neq j$,
\item $d^-(B_i) = 0$, for $i<3$.
\end{enumerate}
We construct the sets $B_i$ recursively over $n\in \omega$ as a disjoint union of sets $\{B^n_i\}_n$ such that each set $B^n_i$ is a subset of $[k^n_{-1},k^n_2)$. Start by defining $B^0_i:= \emptyset,i<3$ and $k_{j}^0:= j+1, j\in 3\cup\{-1\}$. Now assume we have constructed $B^m_i$ and $k^m_j$ for $m<n,i<3 $ and $j\in 3\cup \{-1\}$. We start with $k^n_j$. Let $k^n_{-1}:= k^{n-1}_2$ and choose $k^n_j,j<3$ big enough such that conditions $i)$ and $ii)$ are fulfilled and additionally the difference $k^n_j - k^n_{j-1}$ is an even natural number for each $j<3$.  We perform three construction steps to define the sets $B^n_i$:
\begin{enumerate}[a)]
\item Divide the interval $[k^n_{-1},k^n_0)$ evenly between the two sets $B^n_1$ and $B^n_2$ and avoid  the set $B^n_0$ entirely i.e., 
\begin{align*}
[k^n_{-1},k^n_0) \cap B^n_0 &:= \emptyset,\\
[k^n_{-1},k^n_0) \cap B^n_1 &:= \{ k^n_{-1},  k^n_{-1} + 2 ,\dots , k^n_{0}-2 \}, \\
[k^n_{-1},k^n_0) \cap B^n_2 &:= \{  k^n_{-1} + 1, k^n_{-1}+3 ,\dots ,  k^n_{0}-1 \}.
\end{align*}
\item Divide the interval $[k^n_{0},k^n_1)$ evenly between the two sets $B^n_0$ and $B^n_2$ and avoid  the set $B^n_1$ entirely i.e., 
\begin{align*}
[k^n_{0},k^n_1) \cap B^n_1 &:= \emptyset,\\
[k^n_{0},k^n_1) \cap B^n_0 &:= \{ k^n_{0},  k^n_{0} + 2 ,\dots , k^n_{1}-2 \}, \\
[k^n_{0},k^n_1) \cap B^n_2 &:= \{  k^n_{0} + 1, k^n_{0}+3 ,\dots ,  k^n_{1}-1 \}.
\end{align*}
\item Divide the interval $[k^n_{1},k^n_2)$ evenly between the two sets $B^n_0$ and $B^n_1$ and avoid  the set $B^n_2$ entirely i.e., 
\begin{align*}
[k^n_{1},k^n_2) \cap B^n_2 &:= \emptyset,\\
[k^n_{1},k^n_2) \cap B^n_0 &:= \{ k^n_{1},  k^n_{1} + 2 ,\dots , k^n_{2}-2 \}, \\
[k^n_{1},k^n_2) \cap B^n_1 &:= \{  k^n_{1} + 1, k^n_{1}+3 ,\dots ,  k^n_{2}-1 \}.
\end{align*}
\end{enumerate}
This completes the construction of the sets $B^n_i$ and we can put $B_i := \bigcup_n B^n_i$. We check that conditions iii) and iv) are fulfilled.  Let $i\neq j$ be given. By construction steps $a) - c)$, we know that $B_i \cup B_j$ selects at least each second natural number of each interval $[k^n_{-1},k^n_2)$. Since the intervals $[k^n_{-1},k^n_2)$ partition $\omega$ we get iii).  Condition iv) follows from 
$$\frac{|B_i \cap k^n_i|}{k^n_i} \leq \frac{k^n_{i-1}}{k^n_i} <2^{-n}.$$
Now, we are in a position to define the sets $A_f,f\in I$. For $n\in \omega $ and $i\in 3$, we let $A^n_i := B^n_{i_0} \cup B^n_{i_1}$, where $i_0$ and $i_1$ are chosen such that $\{i,i_0,i_1\}= 3$. Then, we set $A_f := \bigcup_n A^n_{f(n)}$. We have to verify that the sets $A_f$ satisfy conditions $(2)$ and $(3)$. That each set $A_f$ has lower density $\geq 1/2$ follows directly from condition iii) for the sets $B_i$. So let $f,g\in I$ be two different functions and take $n$ such that $f(n)\neq g(n)$. W.l.o.g. assume $f(n)=0$ and $g(n)=1$. Then, from construction step c) it follows that $B_0 \cap B_2 \cap [k^n_1,k^n_2)=\emptyset$ and thus 
$$\frac{|A_f \cap A_g \cap k^n_2|}{k^n_2} \leq \frac{k^n_1}{k^n_2}< 2^{-n}.$$
Since $f$ and $g$ differ on infinitely many $n$ we get $d^-(A_f\cap A_g)=0$. 
\end{proof}

The results of the previous sections are summarized in the table below. 

\vspace{3mm}
\begin{center}
\begin{tabular}{|c|c|c|c|c|c|c|c|c|c|}\hline
$\mathbb{P}$  & $\mathias^+_\varepsilon$ & $\silver^+_\varepsilon$ & $\mathias^-_1$ & $\mathias^-_\varepsilon$ & $\silver^-_\varepsilon$ & $\mathias^+$ & $\silver^+$ & $\mathias^-$ & $\silver^-$ \\ \hline 
proper & \cmark & \cmark & \cmark & \cmark & & \cmark & & & \xmark \\ \hline 
c.c.c  & \xmark & \xmark & \cmark &  \xmark & \xmark  & \xmark & \xmark  & \xmark & \xmark \\ \hline 
Cohen   & \cmark & \xmark & \cmark & \cmark & & \cmark & & \cmark & \\ \hline 
dominating    & \cmark & \xmark & & & & \cmark & & & \\ \hline 
\end{tabular}
\end{center}

\section{Measurability}\label{measurability}

In this section we compare different notions of measurability. We first establish some notations.
\begin{definition} \label{def:ideal-meas}
Let  $\mathcal{X}$ be a non-empty set and $\poset{P}$  be a tree-forcing defined over $\mathcal{X}^\omega$.
  \begin{myrules}
  \item[(1)]
    A subset $X \subseteq \mathcal{X}^\omega$ is called \emph{$\poset{P}$-nowhere dense,} 
if
\[
(\forall p \in \poset{P})( \exists q \leq p) ([q] \cap X = \emptyset).
\]
We denote the ideal of $\mathbb{P}$-nowhere dense sets with $\mathcal{N}_\mathbb{P}$.
\item[(2)]
  A subset $X \subseteq \mathcal{X}^\omega$ is called \emph{$\mathbb{P}$-meager} if it is included in a countable union of $\mathbb{P}$-nowhere dense sets. We denote the $\sigma$-ideal of $\poset{P}$-meager sets with ${\mathcal I}_{\poset{P}}$.
\item[(3)]
A subset $X \subseteq \mathcal{X}^\omega$ is called \emph{ $\poset{P}$-measurable} if 
\[
(\forall p \in \poset{P})( \exists q \leq p) ([q]\setminus X \in \mathcal{I}_\mathbb{P} \vee [q] \cap X \in \mathcal{I}_\mathbb{P}).
\]
\item[(4)] A family $\Gamma\subseteq \mathcal{P}(\mathcal{X}^\omega)$ is called \emph{well-sorted} if it is closed under continuous pre-images. We abbreviate the sentence ``every set in $\Gamma$ is $\poset P$-measurable'' by $\Gamma(\poset P)$.
\end{myrules}
\end{definition}

We make two useful observations concerning the measurability of a set $X$.
\begin{observation}\label{measure}
Let $\poset P$ be a tree-forcing and $X\subseteq \ideal X^\omega$ a set of reals. 
\begin{enumerate}
\item  If $H$ is $\poset P$-comeager, then $X$ is $\poset P$-measurable if and only if $H\cap X$ is $\poset P$-measurable.
\item If $\ideal I_\poset P = \ideal N_ \poset P$, then $X$ is $\poset P$-measurable if and only if 
\[
(\forall p \in \poset{P})( \exists q \leq p) ([q] \subseteq  X  \vee [q] \cap X = \emptyset).
\]
\end{enumerate}

\end{observation}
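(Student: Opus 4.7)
The plan is to handle each part by unpacking the definitions of measurability, meagerness, and nowhere density.

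For part (1), I would first prove the general lemma that $\poset P$-measurability is invariant under modification by $\poset P$-meager sets: if $X \triangle Y \in \ideal I_{\poset P}$ and $X$ is $\poset P$-measurable, then so is $Y$. This lemma gives (1) immediately, since $X \triangle (X \cap H) = X \setminus H \subseteq \ideal X^\omega \setminus H$, which lies in $\ideal I_{\poset P}$ by $\poset P$-comeagerness of $H$; the converse direction of (1) is then symmetric, treating $X \cap H$ as the measurable set and $X$ as the target. To prove the lemma, fix $p \in \poset P$ and take $q \leq p$ witnessing measurability of $X$. In the case $[q] \cap X \in \ideal I_{\poset P}$, the inclusion $[q] \cap Y \subseteq ([q] \cap X) \cup (X \triangle Y)$ together with the $\sigma$-ideal properties of $\ideal I_{\poset P}$ yields $[q] \cap Y \in \ideal I_{\poset P}$. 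The case $[q] \setminus X \in \ideal I_{\poset P}$ is handled symmetrically via $[q] \setminus Y \subseteq ([q] \setminus X) \cup (X \triangle Y)$.

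For part (2), the backward direction is immediate: if $[q] \subseteq X$ or $[q] \cap X = \emptyset$, then one of $[q] \setminus X$, $[q] \cap X$ is empty, hence trivially in $\ideal I_{\poset P}$. For the forward direction, I would fix $p \in \poset P$ and use measurability of $X$ to obtain $q \leq p$ with $[q] \cap X \in \ideal I_{\poset P}$ or $[q] \setminus X \in \ideal I_{\poset P}$. The hypothesis $\ideal I_{\poset P} = \ideal N_{\poset P}$ upgrades either membership to $\poset P$-nowhere density of the corresponding set. Applying Definition~\ref{def:ideal-meas}(1) to the condition $q$ itself produces $q' \leq q$ disjoint from the relevant set; since $[q'] \subseteq [q]$, this simplifies to $[q'] \cap X = \emptyset$ in the first case and $[q'] \subseteq X$ in the second.

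There is no substantial obstacle here; the observation is essentially a translation between the ideal-theoretic measurability condition and the \emph{forcing-style} dichotomy, mediated by the respective hypothesis in each part. The only subtlety worth flagging is the need to rely on the $\sigma$-ideal properties of $\ideal I_{\poset P}$ rather than of $\ideal N_{\poset P}$, since the latter is in general only an ideal and not \emph{a priori} closed under countable unions.
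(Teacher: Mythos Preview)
Your argument is correct. The paper states this as an observation without proof, so there is nothing to compare against; your write-up supplies exactly the routine verification the paper omits, and the two-step strategy (invariance under meager modification for (1), then downgrading meager to nowhere dense and applying the definition for (2)) is the natural one. One minor remark on your closing comment: in the lemma for part~(1) you only ever take a union of \emph{two} sets in $\ideal I_{\poset P}$, so closure under finite unions (which $\ideal N_{\poset P}$ also enjoys) already suffices there; the $\sigma$-ideal property is not actually invoked.
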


For Sacks, Laver, Miller, Silver and Mathias forcing we have $\ideal N_\poset P=\ideal I_\poset P$.  In case of the Silver forcing the usual proof for $\ideal I_\poset V = \ideal N_\poset V$ also works for $\silver^+_\varepsilon$, since it only makes use of fusion sequences. But it is unclear whether we can expect the same for the other versions of Silver forcing. \\
In case of the Mathias forcing however,  we don't have an equality in none of the four versions of the forcing.
\begin{theorem}\label{I and N}
\begin{enumerate}
\item $\mathcal{I}_{\silver^+_\varepsilon}=\mathcal{N}_{\silver^+_\varepsilon}$,
\item $\mathcal{I}_{\poset P}\neq \mathcal{N}_{\poset P}$ for $\poset P \in \{ \mathias^+_\varepsilon,\mathias^-_\varepsilon ,\mathias^+ ,\mathias^-\}$.
\end{enumerate}

\end{theorem}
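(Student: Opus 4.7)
Part (1) reduces to showing $\mathcal{I}_{\silver^+_\varepsilon}\subseteq\mathcal{N}_{\silver^+_\varepsilon}$, since the reverse inclusion is immediate from the definitions. Given $X=\bigcup_{n<\omega} X_n$ with each $X_n$ nowhere dense and given $p\in\silver^+_\varepsilon$, I would build a fusion sequence $p=p_0\geq_0 p_1\geq_1 \dots$ using the strong Axiom A of Lemma \ref{properandbounding}. At step $n+1$, applied to the open dense set $D_n=\{r\in\silver^+_\varepsilon : [r]\cap X_n=\emptyset\}$, the construction in the proof of that lemma produces $p_{n+1}\leq_n p_n$ with the stronger property that $p_{n+1}\restric t\in D_n$ for every $t\in p_{n+1}$ of length at least $k^{p_n}_n$. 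By Fact \ref{fusion1}, the fusion $q=\bigcap_n p_n$ is a condition in $\silver^+_\varepsilon$. For any branch $x\in[q]$ and any $n$, the initial segment $t=x\restric k^{p_n}_n$ lies in $p_{n+1}$ and satisfies $[p_{n+1}\restric t]\cap X_n=\emptyset$; since $x\in[p_{n+1}\restric t]$, this forces $x\notin X_n$. Hence $[q]\cap X=\emptyset$, witnessing $X\in\mathcal{N}_{\silver^+_\varepsilon}$.

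For part (2), I would exhibit a single example $X:=\{\chi_F : F\in[\omega]^{<\omega}\}\subseteq 2^\omega$ that handles all four Mathias variants simultaneously. Being countable, $X$ lies in $\mathcal{I}_\poset{P}$ because each singleton $\{\chi_F\}$ is $\poset{P}$-nowhere dense: from any condition $(s,A)\in\poset{P}$ with $F\supseteq s$ one can pick $n\in A\setminus F$ (which exists since $F$ is finite and $A$ is infinite) and pass to $(s\cup\{n\},A\setminus(n+1))$. Removing finitely many elements from $A$ preserves each of the four density constraints $d^+(A)\geq\varepsilon$, $d^-(A)\geq\varepsilon$, $d^+(A)>0$, $d^-(A)>0$, so this remains a $\poset{P}$-condition, and every branch of its body takes value $1$ at position $n$ while $\chi_F(n)=0$. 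On the other hand, $X\notin\mathcal{N}_\poset{P}$: for every condition $(s,A)\in\poset{P}$ the characteristic function $\chi_s$ of the finite stem belongs to $X\cap[p(s,A)]$ (taking $s'=\emptyset$ in the definition of the Mathias tree), so no subcondition can have body disjoint from $X$. Combining these two observations gives $\mathcal{I}_\poset{P}\neq\mathcal{N}_\poset{P}$.

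The main obstacle is in part (1), where one needs the strong Axiom A in the precise form produced by the proof of Lemma \ref{properandbounding}, namely that $q\restric t$ actually lies in the given open dense set for every sufficiently long $t\in q$, not merely that some finite collection of such restrictions is predense below $q$. With that pointwise information at hand, the fusion step is mechanical. Part (2) is then essentially an observation about the shape of Mathias conditions: the stem $\chi_s$ always sits in the body, so any set containing all such $\chi_s$ is immune to being nowhere dense, even though countably many of them together form a meager set.
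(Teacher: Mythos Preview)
Your argument for part (1) is correct and matches the paper's approach: the paper also observes that the usual fusion proof for Silver forcing goes through once one uses the $\leq_n$ of Definition~\ref{stronger_n}, and your explicit unpacking via Lemma~\ref{properandbounding} is exactly what the paper has in mind.

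For part (2) your argument is correct, but it takes a different and considerably more economical route than the paper. The paper splits into two cases. For $\mathias^+_\varepsilon$ and $\mathias^-_\varepsilon$ it revisits the Cohen-real coding $\varphi$ from Proposition~\ref{Cohen} and shows that the sets $M_n=\{x\in H:\forall k\geq n\,(\varphi(x)(k)=0)\}$ are each nowhere dense while their union is not. For $\mathias^+$ and $\mathias^-$ it instead uses the coideal $\mathcal Z^+$, proving it is $\poset P$-meager (since each $N_n=\{x:d^+(x)\geq 1/n\}$ is nowhere dense) but not nowhere dense (the rightmost branch of any condition lies in $\mathcal Z^+$). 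Your single example $X=\{\chi_F:F\in[\omega]^{<\omega}\}$ handles all four variants at once, using nothing beyond the trivial fact that the all-zeros extension of any stem is a branch and that removing finitely many points preserves all four density constraints. What the paper's examples buy is that they are tied to structural features (the Cohen real, the density-zero ideal) that recur elsewhere in the paper; for instance, the $\varphi$ used in the $\geq\varepsilon$ case reappears in the proof of Theorem~\ref{mathais_epsilon_implies_cohen}, and the $\mathcal Z^+$ example immediately yields Corollary~\ref{measurable}. Your example is cleaner as a proof of the bare statement, but it does not feed into those later results.
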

\begin{proof}
(1) As we mentioned above, the first part of the Theorem is a straightforward generalization of the proof for the usual Silver forcing. The only difference being that one has to use  the partial orderings defined as in Definition \ref{stronger_n} to ensure that fusions exist in $\silver^+_\varepsilon.$ \\
(2) We divide the proof into cases.
\begin{enumerate}
\item[$(\geq \varepsilon)$] Let $\varepsilon\in (0,1]$ be given. The proof  is closely intertwined with the fact that the forcings $\mathias^+_\varepsilon$ and $\mathias^-_\varepsilon$ add Cohen reals.  We quickly recall the construction of the Cohen real from Lemma \ref{Cohen} and define a function $\varphi$ such that the image of the generic real is a Cohen real. Fix $\varepsilon > 0, N \in \omega$ such that $1/N <\varepsilon$ and a partition $\{a_i\}_{i<N+1}$ of $\omega$ such that each set $a_i$ has density $1/(N+1)$. Now let $H:=\{x\in 2^\omega \such \exists^\infty i (x(i)=1)\}$. For $x\in H$ let $\langle n_k^x \such k<\omega\rangle$ enumerate all integers $n$ such that $x(n)=1$. We define a function $\varphi:H\rightarrow 2^\omega$ as follows: \[
\varphi(x)(k) := 
\begin{cases}
 0 , &\exists i < (N + 1) \{ n_{2k}^x , n_{2k+1}^x \} \subseteq  a_i  \\
 1 , &\text{ else.}
\end{cases}
\]
For $n\in\omega$ let $M_n:= \{x\in H \such \forall k \geq n (\varphi(x)(k) = 0) \} $. It is not hard to see that for $\poset P \in \{ \mathias^+_\varepsilon,\mathias^-_\varepsilon \}$, and each $n$ we have $M_n\in \ideal N_{\poset{P}}$, but $M := \bigcup_n M_n\not\in \ideal N_{\poset P}$.
\item[$(>0)$] In this case the meager set which is not nowhere dense lies directly at hand.
\begin{claim}
The set $\ideal Z^+$ is $\poset P$-meager but not $\poset P$-nowhere dense, for $\poset P \in \{\mathias^+,\mathias^- \}$. 
\end{claim}
The proof works for both forcings analogously. We only check it for $\mathias^+$ explicitly. For $n\in \omega$ we define the sets $N_n:= \{ x\in 2^\omega \such d^+(x) \geq 1/n \}$. Let $n\in \omega$ and $p \in\mathias^+$ be fixed. We can easily find $q\leq p$ such that $d^+(A_q)< 1/n$. Such a condition $q$ satisfies the property $\forall x \in [q] (d^+(x)<1/n)$ and in particular $[q]\cap N_n = \emptyset.$ This proves that each set  $N_n$ is $\mathias^+$-nowhere dense and so $\ideal Z^+= \bigcup_n N_n$ is  $\mathias^+$-meager. To see that $\ideal Z^+$ cannot be $\mathias^+$-nowhere dense it is enough to check that each condition $p=(s,A_p) \in \mathias^+$ contains a branch $x$ of positive upper density. Clearly, the rightmost branch (i.e. $x(i)=1 \Leftrightarrow i \in s\cup A_p$) fulfills this requirement.
\end{enumerate} 

\end{proof}
Since subsets of $\poset P$-meager sets are $\poset P$-meager as well, $\poset P$-meager and $\poset P$-comeager sets are $\poset P$-measurable we get the following.
\begin{corollary}\label{measurable}
The sets $\ideal Z, \ideal Z^*$ and $\ideal Z^+$ are $\poset P$-measurable, $\poset P\in \{\mathias^+,\mathias^-\}$. 
\end{corollary}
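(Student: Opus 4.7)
The plan is to derive the corollary directly from Theorem~\ref{I and N}(2) together with the two elementary observations spelled out in the paragraph just before the statement: a set lying in $\ideal I_{\poset P}$ is automatically $\poset P$-measurable (in the definition take $q=p$, so that $[q]\cap X\subseteq X\in\ideal I_{\poset P}$), and the complement of such a set is $\poset P$-measurable as well (now $[q]\setminus X\in\ideal I_{\poset P}$). Hence it suffices to locate each of $\ideal Z$, $\ideal Z^*$, $\ideal Z^+$ either inside or outside a $\poset P$-meager set.

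First, the claim embedded in the proof of Theorem~\ref{I and N}(2) tells us that $\ideal Z^+$ itself is $\poset P$-meager for $\poset P\in\{\mathias^+,\mathias^-\}$, so $\ideal Z^+$ is $\poset P$-measurable. Next, viewing subsets of $\omega$ as characteristic functions in $2^\omega$, we have the disjoint partition $2^\omega=\ideal Z\sqcup \ideal Z^+$; this identifies $\ideal Z$ with the complement of a $\poset P$-meager set, i.e.\ with a $\poset P$-comeager set, which is therefore $\poset P$-measurable.

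Finally, for $\ideal Z^*$ I would use the duality between upper and lower density: if $A\in\ideal Z^*$ then $\omega\setminus A\in \ideal Z$, so $d^-(A)=1-d^+(\omega\setminus A)=1$, and in particular $d^+(A)\geq d^-(A)=1>0$. Thus $\ideal Z^*\subseteq \ideal Z^+$, and since subsets of $\poset P$-meager sets are $\poset P$-meager, $\ideal Z^*$ is $\poset P$-meager and hence $\poset P$-measurable.

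I do not expect any real obstacle here; the corollary is essentially a book-keeping consequence of Theorem~\ref{I and N}(2). The only two one-line checks are the partition identity $2^\omega=\ideal Z\sqcup\ideal Z^+$ and the inclusion $\ideal Z^*\subseteq\ideal Z^+$, both of which are immediate from the definitions of upper and lower density.
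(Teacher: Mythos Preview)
Your proposal is correct and matches the paper's own argument, which is given only as the single sentence preceding the corollary: subsets of $\poset P$-meager sets are $\poset P$-meager, and both $\poset P$-meager and $\poset P$-comeager sets are $\poset P$-measurable. You have simply spelled out the three applications of this principle---$\ideal Z^+$ is meager by the Claim in Theorem~\ref{I and N}(2), $\ideal Z$ is its complement hence comeager, and $\ideal Z^*\subseteq\ideal Z^+$---exactly as intended.
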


Our next goal is to compare different notions of measurability. 
Let $\poset P$ be any tree-forcing. The statement 
\[
``\Gamma(\poset P)\Rightarrow \Gamma(\poset C) \text{, for each well-sorted family } \Gamma"
\]
is true for various tree-forcings adding Cohen reals e.g. Hechler forcing $\poset D$, Eventually different forcing $\poset E$, a Silver like version of Mathias forcing $\poset T$ introduced in  \cite[Definition 2.1.]{Laguzzi_Stuber} and the full-splitting Miller forcing $\poset {FM}$  \cite[Definition 1.1.]{khomskii_laguzzi}. In fact, it appears reasonable enough to ask.
\begin{question}\label{adding Cohens?}
Does each tree-forcing $\poset P$ adding a Cohen real, necessarily satisfy $\Gamma (\poset P) \Rightarrow \Gamma(\poset C)$, for each well sorted family $\Gamma$?
\end{question}
A partial answer to this question was given in \cite[Proposition 3.1.]{Laguzzi_Stuber}. We restate the proposition in a slightly modified version, that will allow us to generalize the result later on. 
  
\begin{proposition}\label{gamma p implies gamma c}
Let $\mathcal{X}$ be a set of size $\leq \omega$ and $\mathbb{P} $ be tree-forcing  defined on $\mathcal{X}^{<\omega}$. Equip $\mathcal{X}$ with the discrete topology and $\mathcal{X}^\omega $ with the product topology. Let $\varphi^*: \mathcal{X}^{<\omega} \rightarrow 2^{<\omega}$ and $\varphi: \mathcal{X}^\omega \rightarrow 2^\omega$ be two mappings that satisfy the following conditions:
\begin{enumerate}
\item $\varphi^*$ is order preserving and $\varphi^*(\langle\rangle) = \langle \rangle$,
\item $\varphi$ is continuous,
\item $(\forall p\in \poset P) \varphi[p] $ is  open dense in $[\varphi^*(\stem(p))]$,
\item $(\forall p \in \mathbb{P}) \forall  t \trianglerighteq \varphi^* (\stem(p)) \exists p'\leq p (\varphi^* (\stem(p')) \trianglerighteq t).$
\end{enumerate}
 

Then $\Gamma(\mathbb{P} )\Rightarrow \Gamma(\mathbb{C})$, for each well-sorted family.
\end{proposition}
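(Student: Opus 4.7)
The plan is to reduce the Cohen-measurability of $X$ to the $\mathbb{P}$-measurability of the $\varphi$-preimage $Y:=\varphi^{-1}(X)$. By condition~(2) and well-sortedness of $\Gamma$, $Y\in\Gamma$, so $Y$ is $\mathbb{P}$-measurable by the hypothesis $\Gamma(\mathbb{P})$. To verify the Baire property of $X$ at an arbitrary $s\in 2^{<\omega}$, first use condition~(1) and finitely many applications of~(4), starting from any condition with empty stem, to produce $p\in\mathbb{P}$ with $\varphi^*(\stem(p))\trianglerighteq s$. Then $\mathbb{P}$-measurability of $Y$ yields $q\leq p$ such that either $[q]\setminus Y\in\mathcal{I}_\mathbb{P}$ or $[q]\cap Y\in\mathcal{I}_\mathbb{P}$; I will treat the former case, the other being symmetric and producing the meager side of the dichotomy. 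Set $t:=\varphi^*(\stem(q))$; then $t\trianglerighteq s$ by order-preservation.

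The main technical step is a transport claim: under this setup, $[t]\setminus X$ is Cohen-meager in $[t]$. By condition~(3), $\varphi[q]$ is open dense in $[t]$, so $[t]\setminus\varphi[q]$ is closed nowhere dense; since $Y=\varphi^{-1}(X)$ gives $\varphi[q]\setminus X=\varphi([q]\setminus Y)$, the task reduces to covering $\varphi([q]\setminus Y)$ by countably many Cohen-nowhere-dense subsets of $[t]$. To this end, decompose $[q]\setminus Y=\bigcup_n N_n$ into $\mathbb{P}$-nowhere-dense pieces; given $u\trianglerighteq t$, apply~(4) to obtain $q_1\leq q$ with $\varphi^*(\stem(q_1))\trianglerighteq u$, then $\mathbb{P}$-nowhere-denseness of $N_n$ to refine to $q_2\leq q_1$ with $[q_2]\cap N_n=\emptyset$, and finally~(3) to see that $\varphi[q_2]$ is an open dense subset of $[\varphi^*(\stem(q_2))]\subseteq[u]$. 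The $\varphi$-saturation of the specific set $[q]\setminus Y$ inside $[q]$ — two points of $[q]$ with the same $\varphi$-image share $Y$-membership — is the combinatorial ingredient that matches each $N_n$ to a Cohen-nowhere-dense neighbourhood of $[t]$ and completes the claim.

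Putting the pieces together, $[t]\setminus X\subseteq([t]\setminus\varphi[q])\cup(\varphi[q]\setminus X)$ is Cohen-meager in $[t]$; the symmetric case (where $[q]\cap Y\in\mathcal{I}_\mathbb{P}$) yields $[t]\cap X$ Cohen-meager. Either way $X$ has the Baire property at $s$, which establishes $\Gamma(\mathbb{C})$. The main obstacle is the transport claim: without injectivity of $\varphi$, images of $\mathbb{P}$-nowhere-dense sets need not be Cohen-nowhere-dense, and it is precisely the $\varphi$-saturation of $[q]\setminus Y$, inherited from $Y$ being a $\varphi$-preimage, that allows the interplay of~(1)--(4) to bridge this gap.
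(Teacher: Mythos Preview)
Your overall strategy---pull $X$ back to $Y=\varphi^{-1}(X)$, apply $\Gamma(\mathbb{P})$ to $Y$, then push the result forward to $[t]=[\varphi^*(\stem(q))]$---is exactly the paper's (which proves this statement as the special case $\mathbb{Q}=\mathbb{C}$ of Proposition~\ref{gamma p implies gamma q}, via Lemma~\ref{main-lemma}). The gap is in your transport claim.

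You decompose $[q]\setminus Y=\bigcup_n N_n$ into $\mathbb{P}$-nowhere-dense pieces and try to show each $\varphi(N_n)$ is Cohen-nowhere-dense. But your construction only produces $q_2\leq q$ with $[q_2]\cap N_n=\emptyset$ and $\varphi[q_2]$ open dense in some $[v]\subseteq[u]$; it does \emph{not} yield an open set disjoint from $\varphi(N_n)$. The saturation you invoke says only that for $z,z'\in[q]$ with $\varphi(z)=\varphi(z')$ one has $z\in Y\iff z'\in Y$; it says nothing about which $N_m$ they lie in. So a point $\varphi(z')$ with $z'\in N_n$ can equal $\varphi(z)$ for some $z\in[q_2]\cap N_m$ with $m\neq n$, and then $\varphi[q_2]\cap\varphi(N_n)\neq\emptyset$. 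In short: the union $\bigcup_m N_m=[q]\setminus Y$ is $\varphi$-saturated, but the individual pieces $N_n$ are not, so $\mathbb{P}$-nowhere-density does not push forward piecewise under $\varphi$.

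The paper circumvents this by working from the comeager side and building the open dense sets on the Cohen side directly. Starting from $\mathbb{P}$-open dense sets $A_n$ with $\bigcap_n A_n\subseteq[q]\cap Y$, it recursively constructs a \emph{tree} of conditions $\{p_\sigma:\sigma\in\mathfrak{c}^{<\omega}\}$ below $q$ with $p_{\sigma^\smallfrown i}\leq p_\sigma$, $[p_{\sigma^\smallfrown i}]\subseteq\bigcap_{k\leq|\sigma|}A_k$, and (using condition~(4)) such that $\{\varphi^*(\stem(p_{\sigma^\smallfrown i})):i\in\mathfrak{c}\}$ is dense below $\varphi^*(\stem(p_\sigma))$. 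Setting $B_n=\bigcup_{|\sigma|=n}\varphi[[p_\sigma]]$, condition~(3) makes each $B_n$ open dense in $[t]$, and the nesting $[p_\sigma]\subseteq\bigcap_{k<|\sigma|}A_k$ is what drives $\bigcap_n B_n\subseteq X$. This tree-of-conditions construction---not saturation alone---is the missing ingredient in your sketch.
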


To understand why Proposition \ref{gamma p implies gamma c} is a partial answer to Question \ref{adding Cohens?}, it should be noted that given $\poset P,\varphi$ as in the proposition and $x_G$ a $\poset P$-generic real, we have that $\varphi(x_G)$ is Cohen.

\begin{theorem}\label{mathais_epsilon_implies_cohen}
$\Gamma(\mathias^+_\varepsilon)\Rightarrow \Gamma(\cohen)$, for each well sorted family $\Gamma$.
\end{theorem}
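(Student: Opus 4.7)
The plan is to invoke Proposition~\ref{gamma p implies gamma c} with $\mathcal{X}=2$ and $\poset P=\mathias^+_\varepsilon$, by manufacturing maps $\varphi^*$ and $\varphi$ that encode the Cohen-adding recipe of Proposition~\ref{Cohen}. Fix $N\in\omega$ with $1/N<\varepsilon$ and a partition $\{a_i\such i\leq N\}$ of $\omega$ into sets each of density $1/(N+1)$. For $s\in 2^{<\omega}$, let $n_0<n_1<\dots<n_{\ell-1}$ enumerate the positions where $s$ takes value $1$, and put $|\varphi^*(s)|=\lfloor \ell/2\rfloor$ with
\[ \varphi^*(s)(k)=0 \iff (\exists i\leq N)\,\{n_{2k},n_{2k+1}\}\subseteq a_i. \]
Extend $\varphi^*$ to a continuous $\varphi$ on the dense $G_\delta$ of branches with infinitely many ones, which is the set on which the generic branch always lands and which is what the argument for Proposition~\ref{gamma p implies gamma c} effectively depends on.

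Clauses~(1) and~(2) of the proposition are then immediate: enlarging $s$ only lengthens the list of $1$-positions, so $\varphi^*$ is order-preserving and $\varphi^*(\langle\rangle)=\langle\rangle$; and $\varphi(x)\restric k$ depends only on $x\restric n$ once $n$ exceeds the position of the $(2k+1)$-th one in $x$. Clauses~(3) and~(4) both reduce to the following density step, which is essentially a restatement of the proof of Proposition~\ref{Cohen}. Given $p=(s,A)\in\mathias^+_\varepsilon$ and any $t\trianglerighteq\varphi^*(\stem(p))$, process the bits of $t$ one by one, at each step adjoining to the current stem a pair of new elements of $A$ chosen so as to realise the next bit: for a $0$-bit, pick both elements from a common $a_i$ with $A\cap a_i$ infinite; for a $1$-bit, pick one element from some such $a_i$ and a second from $A\setminus a_i$. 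The combinatorial input supporting both choices is that $d^+(A)\geq\varepsilon>1/(N+1)$ guarantees $A\cap a_i$ infinite for at least one $i$, and for that same $i$ also $A\setminus a_i$ infinite, since $A\subseteq^* a_i$ would force $d^+(A)\leq 1/(N+1)$.

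The main obstacle is verifying that the condition $p'$ produced in this way is itself a member of $\mathias^+_\varepsilon$. But $p'$ differs from $p$ only by shifting finitely many elements of $A$ into the stem, so its infinite component equals $A\setminus(k+1)$ for some $k$, and $d^+(A\setminus(k+1))=d^+(A)\geq\varepsilon$. Hence $p'\in\mathias^+_\varepsilon$; all hypotheses of Proposition~\ref{gamma p implies gamma c} are met, and $\Gamma(\mathias^+_\varepsilon)\Rightarrow\Gamma(\cohen)$ follows.
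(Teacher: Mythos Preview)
Your proof is correct and follows essentially the same approach as the paper's: both invoke Proposition~\ref{gamma p implies gamma c} with the coding map $\varphi$ built from the partition $\{a_i\}_{i\leq N}$ of Proposition~\ref{Cohen}, verify clauses (1)--(4) via the stem-extension step from that proof, and handle the fact that $\varphi$ is only defined on the set $H$ of branches with infinitely many ones by noting this set is $\mathias^+_\varepsilon$-comeager. The only cosmetic difference is that you define $\varphi^*$ on all of $2^{<\omega}$ via $\lfloor\ell/2\rfloor$, whereas the paper restricts $\varphi^*$ to the set $H^*$ of sequences with an even number of ones; your formulation is arguably cleaner since $\stem(p)$ need not lie in $H^*$.
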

\begin{proof}
Our aim is to apply Proposition \ref{gamma p implies gamma c}. To this end let $H^*:= \{ s\in 2^{<\omega} \such |s^{-1}(\{ 1 \})| \text{ is even } \}$ denote the set of all finite sequences with an even number of $1's$. For $s\in H^*$ let $\langle n_k \such k < |s^{-1}(\{ 1 \})| \rangle$  enumerate $s^{-1}(\{ 1 \})$. Observe that $H^*$ is dense in $2^{<\omega}$. Now fix $N\in \omega $ and a partition $\{ a_i \}_{i<(N+1)}$ of $\omega$ such that $\varepsilon > 1/(N+1) $ and such that each set $a_i$ has density $1/(N+1)$. We define the function $\varphi^*$ as expected. Let $k \in \omega$ and $s\in H^*$ be such that $2k < |s^{-1}(\{ 1 \})| $. We define:
\[
\varphi^*(s)(k):=  
\begin{cases}
0, &\exists i<(N+1) (\{n_{2k} , n_{2k+1}\}\subseteq a_i)  \\
1, &\text{ else.}
\end{cases}
\]
 Let $H$ and $\varphi: H \rightarrow 2^{<\omega}$ be defined as in the proof of Theorem \ref{I and N} (2). Now, given $x\in H$  let $\langle n_k^x \such k<\omega\rangle$ enumerate $x^{-1}\{1 \}$. Then, $\varphi^*(x\restric n_{2k}^x)$ is defined for each $k$ and  $\varphi(x) = \bigcup_k \varphi^*(x \restric n_{2k}^x)$.\\
Note that the same argument used in the proof to show that $\mathias^+_\varepsilon$ adds Cohen reals, gives us $\varphi[p]= [\varphi^*(\stem(p))]$. Especially, condition (3) from the proposition is fulfilled. It is straightforward to check that the other three conditions are satisfied as well and since the set $H$ is $\mathias^+_\varepsilon$-comeager, we can apply the proposition. 
\end{proof}

\section{ $\mathias^+$-measurability}\label{s6}

In this section we examine $\mathias^+$-measurability and give a generalization of Proposition \ref{gamma p implies gamma c}.\\
By Theorem \ref{Cohen_zero} we know that $\mathias^+$
adds Cohen reals. So, it seems reasonable enough to try the same method used in Section \ref{measurability} to prove $\Gamma(\mathias^+_\varepsilon)\Rightarrow \Gamma(\mathbb{C})$, with the forcing $\mathias^+$. However, in doing so one encounters the following problem. The coding from Theorem \ref{Cohen_zero} used to generate the Cohen real, uses information of the \emph{whole} condition and does not depend solely on the stem. To make it clear what we mean, we quickly explain how using the proof of Theorem \ref{Cohen_zero} one gets a coding function defined on a $\mathias^+$-comeager set.
\subsection{Construction of the coding function $\varphi$} Recall that we defined in the proof of Theorem \ref{Cohen_zero} families $\ideal A_n:= \{ A^i_n \such i<2^{(n+1)} \}$, where $A^i_n := \{ k\in \omega \such k = i \mod 2^{(n+1)} \}$. Each $\ideal A_n$ is a maximal antichain in $(\ideal Z^+,\subseteq^*)$ and each set $A^i_n$ has density $2^{-(n+1)}$. Now we define $D_n := \{ A\in [\omega]^\omega \such \exists i< 2^{-(n+1)} (A\subseteq ^* A^i_n) \}$. Then for each $n\in\omega$ and $ (s,A)\in \mathias(\ideal	Z^+)$ there is $B\in D_n$ such that $(s,B) \leq (s,A)$. This already implies that the set $D:= \bigcap D_n$ is $\mathias(\ideal	Z^+)$-comeager and thus it is enough to find a suitable coding function $\varphi$ defined on $D$ instead of $2^\omega$.  First, note that for $x\in D$ we also must have the property $(\forall  n \exists ! i_n (x\subseteq^* A^{i_n}_n))$ and we abbreviate $A^{i_n}_n$ with $A_n$. So, we can define for $x\in D$  two sequences $\langle n_i \such i<\omega \rangle$ and $\langle m_i \such i<\omega\setminus \{ 0 \} \rangle$ as in the proof of Theorem \ref{Cohen_zero}. Finally, $\varphi:D \rightarrow 2^{\omega}$ is defined as
\[
\varphi(x)(i):= 
\begin{cases}
0 , &\text{ if } x\setminus (m_{i+1} + 1)\subseteq A_{m_{i+1}} \\
1 , &\text{ else. }
\end{cases}
\]
This definition of the coding function $\varphi$ seems promising. However, when one takes a close look at possible candidates for a corresponding $\varphi^*$, it becomes clear that $\varphi^*$ cannot depend solely on the stem of a condition. Especially, we cannot apply Proposition \ref{gamma p implies gamma c} as it is stated in Section \ref{measurability}. We have to find a generalization like the following. 

\begin{proposition}\label{gamma p implies gamma q}
Let $\mathcal{X},\mathcal{Y}$ be sets of size $\leq \omega$, $\mathbb{P}, \mathbb{Q}$ be tree-forcings  defined on $\mathcal{X}^{<\omega},\mathcal{Y}^{<\omega}$ respectively. Equip $\mathcal{X},\mathcal{Y}$ with the discrete topology and $\mathcal{X}^\omega , \mathcal{Y}^\omega$ with the product topology. Let $\varphi^*: \mathbb{P} \rightarrow \mathbb{Q}$ and $\varphi: \mathcal{X}^\omega \rightarrow \mathcal{Y}^\omega$ be two mappings that satisfy the following conditions:
\begin{enumerate}
\item $\varphi^*$ is order preserving and $\varphi^*(1_\poset P) = 1_\poset Q$,
\item $\varphi$ is continuous,
\item $\forall p\in \poset P \varphi[p] $ is $\poset Q$-open dense in $[\varphi^*(p)]$,
\item $\forall p \in \mathbb{P} \forall  q \leq \varphi^* (p) \exists p'\leq p (\varphi^* (p') \leq q).$
\end{enumerate}
 Then $\Gamma(\mathbb{P} )\Rightarrow\Gamma(\mathbb{Q})$, for each well-sorted family $\Gamma$.
\end{proposition}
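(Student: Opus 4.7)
Let $\Gamma$ be well-sorted and $X \in \Gamma$. The plan is to mimic Proposition~\ref{gamma p implies gamma c}, but working at the level of whole $\mathbb{P}$-conditions rather than just stems. By continuity of $\varphi$ and closure of $\Gamma$ under continuous preimages, $\varphi^{-1}[X] \in \Gamma$, so the hypothesis $\Gamma(\mathbb{P})$ gives that $\varphi^{-1}[X]$ is $\mathbb{P}$-measurable. Given $q \in \mathbb{Q}$, I would first apply (4) to $1_{\mathbb{P}}$ (using $\varphi^{*}(1_{\mathbb{P}}) = 1_{\mathbb{Q}}$ from (1)) to locate $p \in \mathbb{P}$ with $\varphi^{*}(p) \leq q$, and then use $\mathbb{P}$-measurability of $\varphi^{-1}[X]$ below $p$ to find $p' \leq p$ such that, without loss of generality, $[p'] \cap \varphi^{-1}[X] \subseteq \bigcup_{n} N_{n}$ with each $N_{n}$ a $\mathbb{P}$-nowhere dense set. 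Set $q' := \varphi^{*}(p') \leq q$; the aim is to show $[q'] \cap X$ is $\mathbb{Q}$-meager.

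The basic decomposition is
$$[q'] \cap X \;\subseteq\; \bigl([q'] \setminus \varphi[p']\bigr) \;\cup\; \bigcup_{n} \varphi\bigl[N_{n} \cap [p']\bigr],$$
valid because any $y \in [q'] \cap X \cap \varphi[p']$ is of the form $\varphi(x)$ for some $x \in [p'] \cap \varphi^{-1}[X] \subseteq \bigcup_{n} N_{n}$. Condition (3) makes $[q'] \setminus \varphi[p']$ a $\mathbb{Q}$-nowhere dense set. The main step is then the key lemma that each $\varphi[N_{n} \cap [p']]$ is also $\mathbb{Q}$-nowhere dense (in $[q']$): given $q'' \leq q'$, reflect via (4) to $p_{1} \leq p'$ with $\varphi^{*}(p_{1}) \leq q''$, use $\mathbb{P}$-nowhere density of $N_{n}$ to refine to $p_{2} \leq p_{1}$ with $[p_{2}] \cap N_{n} = \emptyset$, and use (3) to extract $q''' \leq \varphi^{*}(p_{2})$ with $[q'''] \subseteq \varphi[p_{2}]$; this $q''' \leq q''$ is the natural candidate witness.

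The hard part is verifying that this $q'''$ actually satisfies $[q'''] \cap \varphi[N_{n} \cap [p']] = \emptyset$. Since $\varphi$ is not assumed injective, a point $y \in [q''']$ has at least one preimage in $[p_{2}]$ (outside $N_{n}$), but it could in principle have additional preimages inside $N_{n} \setminus [p_{2}]$. I expect to close this gap by iterating the extraction in a fusion-style construction: build a decreasing sequence $p_{2} \geq p_{3} \geq \cdots$ in $\mathbb{P}$, each disjoint from $N_{n}$, together with a parallel decreasing sequence $q''' \geq q^{(4)} \geq \cdots$ satisfying $[q^{(m+1)}] \subseteq \varphi[p_{m+1}]$, so that in the fusion limit every $\varphi$-preimage of every $y \in \bigcap_{m} [q^{(m)}]$ is forced into $\bigcap_{m} [p_{m}]$, hence away from $N_{n}$. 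Orchestrating this fusion so that (3) and (4) deliver compatible refinements on both sides is where the real work sits; the dual case in which $[p'] \setminus \varphi^{-1}[X]$ is $\mathbb{P}$-meager is handled symmetrically and yields that $[q'] \setminus X$ is $\mathbb{Q}$-meager.
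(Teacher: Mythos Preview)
Your central step---that each $\varphi[N_{n}\cap[p']]$ is $\mathbb{Q}$-nowhere dense---is false in general, and the fusion you propose does not rescue it. Take $\mathbb{P}=\mathbb{Q}=\cohen$ on $2^{<\omega}$, $\varphi(x)(n)=x(2n)$, and $\varphi^{*}(s)$ the sequence recording the even-indexed entries of $s$; conditions (1)--(4) are immediate, with $\varphi[[s]]=[\varphi^{*}(s)]$. The set $N=\{x:\forall n\; x(2n+1)=0\}$ is closed nowhere dense, yet $\varphi[N]=2^{\omega}$. The point is that $N$ need not be $\varphi$-saturated: a $y$ can have one preimage outside $N$ and another inside $N$, and nothing in (1)--(4) rules this out. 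Your fusion only arranges $[q^{(m+1)}]\subseteq\varphi[p_{m+1}]$, which says each $y\in[q^{(m+1)}]$ has \emph{some} preimage in $[p_{m+1}]$; it never constrains the \emph{other} preimages of $y$, which may sit in $N_{n}\cap[p']$ throughout. So even in the limit the conclusion ``every $\varphi$-preimage of $y$ is forced into $\bigcap_{m}[p_{m}]$'' is simply unwarranted.

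The paper avoids pushing nowhere dense sets forward altogether. Its key lemma works on the comeager side: assuming $\bigcap_{n}A_{n}\subseteq[p]\cap\varphi^{-1}[Y]$ with each $A_{n}$ $\mathbb{P}$-open dense, it builds a $\mathfrak{c}$-branching tree $\{p_{\sigma}:\sigma\in\mathfrak{c}^{<\omega}\}$ below $p$ by, at each node, enumerating \emph{all} $\mathbb{Q}$-conditions below $\varphi^{*}(p_{\sigma})$, reflecting each to $\mathbb{P}$ via (4), and refining into $\bigcap_{k\le|\sigma|}A_{k}$. Then $B_{n}:=\bigcup_{\sigma\in\mathfrak{c}^{n}}\varphi[[p_{\sigma}]]$ is $\mathbb{Q}$-open dense in $[\varphi^{*}(p)]$ directly by (3), and $\bigcap_{n}B_{n}\subseteq Y\cap[\varphi^{*}(p)]$. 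The difference is structural: rather than trying to show $\varphi[\text{small}]$ is small (which fails), the paper constructs the $\mathbb{Q}$-open dense witnesses $B_{n}$ explicitly from the $\mathbb{P}$-side data, so non-injectivity of $\varphi$ never enters.
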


The key difference is that $\varphi^*$ is a map from $\poset P$ to $\poset Q$, instead of being defined for finite sequences.\\
Before we turn to the proof of the proposition we investigate further if we might apply it to $\mathias^+$ and $\mathbb{C}$. To this end we already have defined a $\mathias^+$-comeager set $D$ and a coding function $\varphi:D\rightarrow 2^\omega$ satisfying $\varphi(x_G)$ is Cohen, where $x_G$ is $\mathias^+$-generic. We want to define $\varphi^*:\mathias^+\rightarrow \cohen$. Let $\dot{x}_G$ be the canonical name for the $\mathias^+$-generic real. For $p\in \mathias^+$ let $r_p\in 2^{<\omega}$ be maximal such that $p \Vdash \varphi (\dot{x}_G) \trianglerighteq r_p$ and put $\varphi^*(p):= r_p$.
Observe that we have the following  properties: 
\begin{itemize}
\item $\varphi^*$ is order preserving and $\varphi^*((\langle\rangle,\omega))=\langle\rangle$,
\item $\varphi$ is not continuous,
\item It follows from the proof that $\varphi(\dot{x}_G)$ is Cohen, 
that conditions (3) and (4) of Proposition \ref{gamma p implies gamma q} are satisfied.
\end{itemize}
This shows that we almost get $\Gamma(\mathias^+)\Rightarrow \Gamma(\cohen)$, for any well-sorted family $\Gamma$. In fact, the only time we need $\varphi$ to be continuous is to ensure that the pre-image of a regular set $Y \in \Gamma$ is again regular. So, if we change the requirement of $\Gamma$ of being well-sorted and instead assume that the family of sets $\Gamma$  is closed under pre-images of the $\varphi$ constructed above we get:
\begin{corollary}\label{all_upper_all_baire}
Let $\varphi$ be defined as in the beginning of this section and $\Gamma$ be a family of sets closed under pre-images of $\varphi$. Then  $\Gamma(\mathias^+)\Rightarrow \Gamma(\cohen)$ holds.
\end{corollary}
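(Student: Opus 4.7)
The plan is to revisit the proof of Proposition~\ref{gamma p implies gamma q} with $\mathbb{P}=\mathias^+$, $\mathbb{Q}=\cohen$, and the pair $(\varphi^*,\varphi)$ explicitly constructed at the beginning of Section~\ref{s6}, and to locate the single place where continuity of $\varphi$ is invoked. My expectation, based on the paragraph preceding the corollary, is that continuity is used exclusively to conclude that if $Y\in\Gamma$ then $\varphi^{-1}[Y]\in\Gamma$. Once this point is isolated, replacing the standing assumption that $\Gamma$ is well-sorted with the weaker hypothesis that $\Gamma$ is closed under pre-images of \emph{this particular} $\varphi$ is enough to push the argument through.

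The first step is verification of the remaining hypotheses of Proposition~\ref{gamma p implies gamma q} for our concrete $\varphi^*\colon\mathias^+\to\cohen$ and $\varphi\colon D\to 2^{\omega}$. Items (1) and (4) are essentially built into the definition $\varphi^*(p)=r_p$, where $r_p$ is the longest initial segment of $\varphi(\dot x_G)$ forced by $p$: if $q\leq p$ then $q$ forces everything $p$ forces, so $r_q\trianglerighteq r_p$, giving order preservation; and given $p$ and $t\trianglerighteq r_p$, density of the collection of conditions below $p$ that force $\varphi(\dot x_G)\trianglerighteq t$ (established in the proof of Theorem~\ref{Cohen_zero}) yields the required $p'\leq p$ with $\varphi^*(p')\trianglelefteq t$ extended. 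Item (3), that $\varphi[p]$ is open dense in $[r_p]$, is again an immediate consequence of the Cohen-generic construction in Theorem~\ref{Cohen_zero}: any finite extension of $r_p$ can be forced below $p$.

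The second step is the core measurability transfer. Given $Y\in\Gamma$ and a Cohen condition $t\in 2^{<\omega}$, choose $p\in\mathias^+$ with $\varphi^*(p)=r_p\trianglerighteq t$ via (4) starting from $1_{\mathias^+}$. By hypothesis $\varphi^{-1}[Y]\in\Gamma$, hence by $\Gamma(\mathias^+)$ there is $p'\leq p$ deciding the $\mathias^+$-measurability dichotomy for $\varphi^{-1}[Y]$; say $[p']\cap\varphi^{-1}[Y]\in\ideal I_{\mathias^+}$ (the other case is symmetric). The final step is to transport this into a Cohen dichotomy for $Y$ below $t$: using item (3), $\varphi[p']$ is open dense in some basic Cohen open set $[r_{p'}]\subseteq[t]$, and using the fact that $\ideal I_{\mathias^+}$ is generated by $\mathias^+$-nowhere dense sets together with item (4), the image of a $\mathias^+$-nowhere dense subset of $[p']$ under $\varphi$ is $\cohen$-meager in $[r_{p'}]$ — this is exactly the lemma inside the proof of Proposition~\ref{gamma p implies gamma q} and requires only the combinatorial properties (1)--(4), not the continuity of $\varphi$. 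Hence $[r_{p'}]\cap Y$ is Cohen-meager, completing the Cohen-measurability dichotomy for $Y$.

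The main obstacle is the last transport step: one has to check that the image-of-meager-is-meager argument in the proof of Proposition~\ref{gamma p implies gamma q} genuinely survives dropping continuity. I expect it does, because that argument only uses (3) (to locate, inside any basic Cohen open set below $r_p$, a point in the image) and (4) (to refine a given $\mathias^+$-condition to one whose image lies inside a prescribed Cohen basic open set); continuity enters the original proof of Proposition~\ref{gamma p implies gamma q} only to conclude $\varphi^{-1}[Y]\in\Gamma$, which we have now assumed outright. If a subtler use of continuity appears upon inspection — for instance to ensure Borel-measurability of $\varphi$ on the $\mathias^+$-comeager set $D$ — the fallback is to enlarge $\Gamma$ to include the relevant level sets, which is still consistent with the hypothesis since they are all of the form $\varphi^{-1}[\text{basic open}]$.
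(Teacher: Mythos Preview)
Your proposal is correct and follows exactly the approach the paper intends: the corollary is stated immediately after the sentence ``In fact, the only time we need $\varphi$ to be continuous is to ensure that the pre-image of a regular set $Y \in \Gamma$ is again regular,'' and the paper gives no further proof beyond that remark. Your elaboration of why conditions (1), (3), (4) hold and why Lemma~\ref{main-lemma} and the proof of Proposition~\ref{gamma p implies gamma q} go through without continuity is accurate, and indeed more detailed than what the paper provides; the fallback you mention in your last paragraph is unnecessary, since inspection of the proof of Lemma~\ref{main-lemma} confirms that continuity is genuinely never invoked there.
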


Now we prove Proposition \ref{gamma p implies gamma q}. The key step is the following lemma.

\begin{lemma} \label{main-lemma}
Let $\mathbb{P},\mathbb{Q},\varphi,\varphi^*$ be as in the Proposition and $Y\subseteq \mathcal{Y}^{\omega}$. Define $X := \varphi^{-1}[Y]$. Assume there is $p \in \mathbb{P}$ such that $X \cap [p]$ is $\mathbb{P}$-comeager in $[p]$. Then $Y \cap [\varphi^*(p)]$ is $\mathbb{Q}$-comeager in  $[\varphi^*(p)]$.
\end{lemma}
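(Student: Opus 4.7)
The plan is to decompose $[\varphi^{*}(p)]\setminus Y$ into two $\mathbb{Q}$-meager pieces. One is $[\varphi^{*}(p)]\setminus\varphi[p]$, which is $\mathbb{Q}$-nowhere dense by $(3)$ of the proposition. The other should be the complement of a $\mathbb{Q}$-dense $G_\delta$ contained in $Y$. Starting from a cover $[p]\setminus X\subseteq\bigcup_n N_n$ with each $N_n$ $\mathbb{P}$-nowhere dense (and, without loss of generality, $N_n\subseteq X^{c}\cap[p]$), I will construct $\mathbb{Q}$-open dense sets $V_n$ in $[\varphi^{*}(p)]$ whose intersection meets $\varphi[p]$ only inside $Y$. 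The key property I exploit is that $X^{c}$ is \emph{$\varphi$-saturated}: if $\varphi(x')=\varphi(x)$ and $x\in X^{c}$ then $x'\in X^{c}$. This is immediate from $X=\varphi^{-1}[Y]$ and yields $\varphi[p]\setminus Y=\varphi[X^{c}\cap[p]]$.

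For each $n$, let $D_n:=\{p'\leq p:[p']\cap N_n=\emptyset\}$, a $\mathbb{P}$-dense open subset below $p$, and set
\[
V_n:=\bigcup\{\varphi[p']:p'\in D_n\}\subseteq[\varphi^{*}(p)].
\]
Each $V_n$ is $\mathbb{Q}$-open by $(3)$, and $\mathbb{Q}$-dense below $\varphi^{*}(p)$ by a three-step routine: given $q\leq\varphi^{*}(p)$, $(4)$ produces $p_1\leq p$ with $\varphi^{*}(p_1)\leq q$; the density of $D_n$ yields $p_2\leq p_1$ in $D_n$; and $(3)$ applied to $p_2$ locates $q'\leq\varphi^{*}(p_2)\leq q$ with $[q']\subseteq\varphi[p_2]\subseteq V_n$. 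Hence $\bigcap_n V_n$ is $\mathbb{Q}$-comeager in $[\varphi^{*}(p)]$.

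The crucial claim is $\bigcap_n V_n\cap\varphi[p]\subseteq Y$; granting it, $[\varphi^{*}(p)]\setminus Y$ is covered by two $\mathbb{Q}$-meager pieces and we are done. To prove the claim, fix $y\in\bigcap_n V_n\cap\varphi[p]$ and suppose $y\in Y^{c}$. Saturation of $X^{c}$ forces $\varphi^{-1}(y)\cap[p]\subseteq X^{c}\cap[p]\subseteq\bigcup_m N_m$, while the membership $y\in V_n$ provides, for each $n$, some $p'_n\in D_n$ and $x_n\in[p'_n]$ with $\varphi(x_n)=y$, in particular $x_n\notin N_n$. The goal is to coalesce the sequence $(x_n)_n$ into a single $x^{\ast}\in\varphi^{-1}(y)\cap[p]\setminus\bigcup_n N_n$; such an $x^{\ast}$ lies in $X$ and yields $y=\varphi(x^{\ast})\in Y$, contradicting the assumption.

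The principal obstacle is this fusion step. The clean strategy is to choose the witnesses $p'_n$ recursively in a descending chain, arranging at each stage that $y\in\varphi[p'_{n+1}]$, $p'_{n+1}\leq p'_n$, and $[p'_{n+1}]\cap(N_0\cup\cdots\cup N_{n+1})=\emptyset$; the transition from $p'_n$ to $p'_{n+1}$ proceeds by picking a basic neighbourhood $[q_n]$ of $y$ with $[q_n]\subseteq\varphi[p'_n]$ of larger length via $(3)$, lifting it through $(4)$ to some $p''\leq p'_n$ with $\varphi^{*}(p'')\leq q_n$, and finally intersecting with $D_{n+1}$ while keeping $y\in\varphi[\cdot]$ by one more application of $(3)$. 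The chain $(p'_n)$ admits a common branch $x^{\ast}\in\bigcap_n[p'_n]$ either via a fusion property of the forcing (Axiom~A, satisfied by the Mathias and Silver variants of the paper) or, in broader generality, by compactness of $\mathcal{X}^{\omega}$ when $\mathcal{X}$ is finite; continuity of $\varphi$ forces $\varphi(x^{\ast})=y$, and $x^{\ast}\in[p'_n]$ for all $n$ automatically guarantees $x^{\ast}\notin\bigcup_m N_m$, closing the contradiction.
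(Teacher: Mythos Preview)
Your fusion argument has a real gap at the step ``intersecting with $D_{n+1}$ while keeping $y\in\varphi[\cdot]$ by one more application of~(3)''. Condition~(3) only asserts that $\varphi[p'_{n+1}]$ is $\mathbb{Q}$-open dense in $[\varphi^{*}(p'_{n+1})]$; it says nothing about the fixed point $y$. After you lift $q_n$ to $p''\leq p'_n$ via~(4) and then strengthen $p''$ into $D_{n+1}$, the image $\varphi^{*}(p'_{n+1})$ has moved strictly below $q_n$ in $\mathbb{Q}$, and $y$---which was only known to lie in $[q_n]$---need not belong to $[\varphi^{*}(p'_{n+1})]$, let alone to $\varphi[p'_{n+1}]$. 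Thus the descending chain with $y\in\varphi[p'_n]$ throughout cannot be built as described. A secondary issue is that the existence of $x^{*}\in\bigcap_n[p'_n]$ requires compactness of $\mathcal{X}^{\omega}$ or a fusion axiom, neither of which is among the hypotheses of the Proposition; invoking Axiom~A for ``the Mathias and Silver variants of the paper'' does not rescue the general statement you are asked to prove.

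The paper avoids fusion altogether and never fixes a point $y$. Starting from a decreasing sequence $(A_n)$ of $\mathbb{P}$-open dense sets with $\bigcap_n A_n\subseteq X\cap[p]$, it builds a $\mathfrak{c}$-branching tree $\{p_\sigma:\sigma\in\mathfrak{c}^{<\omega}\}$ below $p$: at each node $p_\sigma$ one enumerates \emph{all} conditions $q\leq\varphi^{*}(p_\sigma)$, lifts each through~(4), and refines the lift into $\bigcap_{k\leq|\sigma|}A_k$ to obtain the successors $p_{\sigma^\frown i}$. Setting $B_n=\bigcup\{\varphi[[p_\sigma]]:\sigma\in\mathfrak{c}^n\}$ yields $\mathbb{Q}$-open dense sets in $[\varphi^{*}(p)]$, and the containment $\bigcap_n B_n\subseteq Y$ is obtained via the chain $\bigcap_n B_n\subseteq\varphi[\bigcap_n A_n]\subseteq\varphi[[p]\cap X]\subseteq Y$. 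The continuum-wide branching at each stage is precisely what substitutes for the fusion you attempted: rather than tracking one preimage of a fixed $y$ along a single branch, the construction spreads over all possible $\mathbb{Q}$-refinements simultaneously.
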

\begin{proof}
We are assuming $X \cap [p]$ is $\mathbb{P}$-comeager, for some $p \in \mathbb{P}$. This implies that there is a collection $\{A_ n \such n<\omega \wedge A_n \text{ is } \mathbb{P}\text{-open dense in }[p]\}$ such that $\bigcap_n A_n \subseteq [p]\cap X$. W.l.o.g. assume $A_n \supseteq A_{n+1}$, for all $n$. Let $q = \varphi^* (p)$. We want to show that $\varphi [X] \cap [q] = Y \cap [q]$ is $\mathbb{Q}$-comeager in $[q]$ i.e., we want to find $\{B_n \such n<\omega \}$ $\mathbb{Q}$-open dense sets in $[q]$ such that $\bigcap_n B_n \subseteq Y \cap [q]$. Given $\sigma \in \mathfrak{c}^{<\omega}$ we recursively define on the length of $\sigma$ a set $\{p_\sigma \such \sigma \in \mathfrak{c}^{<\omega}  \} \subseteq \mathbb{P}$ with the following properties:
\begin{description}
\item[1.] $p_{\langle\rangle} = p$,
\item[2.] $\forall \sigma \in \mathfrak{c}^{<\omega} \; \bigcup_{i} [\varphi^* (p_{\sigma^\conc i })]$ is $\mathbb{Q}$-open dense in $[\varphi^* (p_{\sigma})]$,
\item[3.] $\forall \sigma \in \mathfrak{c}^{<\omega} \forall i \in \omega \; ([p_{\sigma^\conc i}] \subseteq \bigcap_{k\leq |\sigma|} A_{k} \wedge p_{\sigma^\conc i} \leq p_{\sigma})$.
\end{description}
Assume we are at step $n$. Fix $\sigma \in \mathfrak{c}^n$ arbitrarily and then put $q_\sigma = \varphi ^* (p_\sigma)$. We first make sure that $2.$ holds. For this purpose, let $\{ q_i \such i<\mathfrak{c} \}$ enumerate all conditions in $\mathbb{Q}$ below $q_\sigma$. By condition (4) from Proposition \ref{gamma p implies gamma q}  we can find $p_i \leq p_\sigma$ such that $\varphi^* (p_i) \leq  q_i$. Since each $A_k$ is $\mathbb{P}$-open dense in $[p]$ we can find for each $i<\mathfrak{c}$ an extension $p_{\sigma^\conc i} \leq p_i$ such that $[p_{\sigma^\conc i} ] \subseteq \bigcap_{k \leq n} A_k$. This ensures that also $3.$ holds.
Finally, we put  $B_{n} := \bigcup\{ \varphi[[p_{\sigma}]]\such \sigma\in \mathfrak{c}^n \}$. \\
We have to check that each set $B_n$ is $\mathbb{Q}$-open dense in $[q]$ and $\bigcap_n B_n \subseteq Y\cap [q]$. So fix $n\in \omega$ and $q'\leq q = \varphi^*(p)$. In the first construction step this $q'$ was enumerated, say by $ i < \mathfrak{c}  $ so $ q'= q_i$ and $ \varphi^* (p_{\langle i \rangle}) \leq q'$. Especially,   $\varphi^*(p_\sigma)\leq q'$, whenever $\sigma \in \mathfrak{c}^n, \sigma(0)=i$. By condition $(3)$ from the Proposition $\varphi[ [p_\sigma] ]$ is $\mathbb{Q}$-open dense in  $[ \varphi^*(p_\sigma)]$. This proves that $B_n$ is $\mathbb{Q}$-open dense in $[q].$ \\
By construction of $B_{n + 1}$ we know $B_n \subseteq \varphi[\bigcap_{k\leq n+1} A_k]$ and hence 
\[
\bigcap B_n \subseteq \varphi [\bigcap_n A_n]\subseteq \varphi[[p] \cap X] \subseteq \varphi [p] \cap Y \subseteq [q] \cap Y.
\]
\end{proof}

\begin{proof}[Proof of the proposition] Let $Y \in \Gamma $ be given and put $X := \varphi^{-1} [Y]$. Then also $X\in\Gamma$, since $\Gamma$ is well-sorted and $\varphi$ is continuous.  We now use the lemma to show that for every $q \in \mathbb{Q}$ there exists $q' \leq q$ such that $Y \cap [q']$ is $\mathbb{Q}$-meager or $Y \cap [q']$ is $\mathbb{Q}$-comeager.  

Observe that by conditions $(1)$ and $(4)$ we get $\varphi^*[\mathbb{P}]$ is dense in $\mathbb{Q}$. Now fix $q \in \mathbb{Q}$ arbitrarily and pick $p \in \mathbb{P}$ such that $\varphi^*(p) \leq q$. By assumption $X$ is $\mathbb{P}$-measurable, and so:
\begin{itemize}
\item in case there exists $p' \leq p$ such that $X \cap [p']$ is $\mathbb{P}$-comeager; put $q ':=  \varphi^*(p')$. By the lemma above, $Y \cap [q']$ is $\mathbb{Q}$-comeager in $ [q']$;
\item in case there exists $p' \leq p$ such that $X \cap [p']$ is $\mathbb{P}$-meager, then apply the lemma above to the complement of $X$, in order to get $Y \cap [q']$ be meager in $ [q']$, with $q':=  \varphi^*(p')$. 
\end{itemize}
\end{proof}

 \begin{corollary}
Let $\mathbb{P},\mathbb{Q},\varphi^*$ and $\varphi$ be as in proposition. Then $\varphi (x_G)$ is $\mathbb{Q}$-generic, where $x_G$ is a $\poset P$-generic real.
\end{corollary}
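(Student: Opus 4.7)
The plan is to verify the standard criterion for $\mathbb{Q}$-genericity in a tree forcing: I will show that for every open dense $D \subseteq \mathbb{Q}$ lying in the ground model there is some $q \in D$ with $\varphi(x_G) \in [q]$. Fix such a $D$ and form the pullback
$D^{\ast} := \{p \in \mathbb{P} : \varphi^{\ast}(p) \in D\}$.
The bulk of the argument consists in showing that $D^{\ast}$ is open dense in $\mathbb{P}$.

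Openness is immediate from clause (1) of Proposition~\ref{gamma p implies gamma q}: if $p' \leq p$ and $\varphi^{\ast}(p) \in D$, then order preservation gives $\varphi^{\ast}(p') \leq \varphi^{\ast}(p) \in D$, and the downward closure of $D$ yields $\varphi^{\ast}(p') \in D$. For density, I would start with an arbitrary $p \in \mathbb{P}$, use density of $D$ in $\mathbb{Q}$ to pick $q \leq \varphi^{\ast}(p)$ with $q \in D$, and then invoke clause (4) to obtain some $p' \leq p$ with $\varphi^{\ast}(p') \leq q$; openness of $D$ then places $p' \in D^{\ast}$. With density of $D^{\ast}$ in hand, $\mathbb{P}$-genericity of $x_G$ supplies $p \in G$ with $\varphi^{\ast}(p) \in D$, and in particular $x_G \in [p]$. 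By clause (3), $\varphi[p]$ is open dense in $[\varphi^{\ast}(p)]$, hence $\varphi[p] \subseteq [\varphi^{\ast}(p)]$, so $\varphi(x_G) \in \varphi[p] \subseteq [\varphi^{\ast}(p)]$ and $q := \varphi^{\ast}(p) \in D$ is the required witness.

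No serious obstacle arises here: the proof is essentially bookkeeping once one recognises that clauses (1) and (4) are precisely what is needed to lift open dense sets from $\mathbb{Q}$ to $\mathbb{P}$, while clause (3) is what guarantees that the $\varphi$-image of the generic real lands inside $[\varphi^{\ast}(p)]$ whenever $p \in G$. In fact the argument is implicit in the proof of Lemma~\ref{main-lemma}, since the same pullback construction was already used there to verify density of the auxiliary sets $B_n$.
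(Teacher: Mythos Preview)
Your proof is correct and follows essentially the same approach as the paper: both arguments use density of $D$ together with clause~(4) to find, below any $p\in\mathbb{P}$, a condition $p'$ with $\varphi^*(p')$ inside $D$, and then invoke clause~(3) to conclude that $p'$ forces $\varphi(x_G)\in[\varphi^*(p')]\subseteq[q']$ for some $q'\in D$. The only cosmetic difference is that you package the argument via the pullback set $D^*$ and verify its openness explicitly, whereas the paper works directly with the set of conditions forcing the desired statement.
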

\begin{proof}[Proof of the Corollary] Fix an $\mathbb{Q}$-open dense set $D\subseteq \mathbb{Q}$. We want to show that the conditions $p\in \mathbb{P}$ such that $p\Vdash \exists q\in D (\varphi(x_G)\in[q])$ is dense in $\mathbb{P}$. To this end, fix a condition $p\in \mathbb{P}$. Then, since $D$ is dense in $\mathbb{Q}$ there is $q'\in D$ below $\varphi^*(p)$.  By condition $(4)$ of Proposition \ref{gamma p implies gamma q} there is $p'\leq p$ such that $\varphi^*(p')\leq q'$. By condition $(3)$ we know that each condition $r\in \mathbb{P}$ forces $\varphi(x_G)$ into $[\varphi^*(r)]$ and hence  $p' \Vdash \varphi(x_G)  \in  [\varphi^* (p')]\subseteq [q']$.

\end{proof}

In light of this one might also generalize Question \ref{adding Cohens?} to the following. 

\begin{question}\label{adding_generics?}
Let $\poset P,\poset Q$ be two tree-forcings and assume $\poset P$ adds a $\mathbb{Q}$-generic. Does $\Gamma(\poset P) \Rightarrow \Gamma (\mathbb{Q})$ hold, for each well-sorted family $\Gamma$?
\end{question}

\section{A model for $\Sigma^1_2(\silver^+_\varepsilon) \wedge \neg \Sigma^1_2(\cohen)$}

We construct a model in which the implication $\Gamma(\silver^+_\varepsilon) \Rightarrow \Gamma(\cohen)$ fails for $\Gamma = \Sigma^1_2$.\\ 
We recall that the shortest splitting node extending $s\in 2^{<\omega}$ is denoted by $\splsuc(s)$ (see Definition \ref{1.1} (e)).

\begin{lemma} \label{silver-cohen}
Let $\varepsilon \in (0,1]$ and $p \in \silver^+_\varepsilon$. Let $\bar \varphi: \splitting(p) \rightarrow 2^{<\omega}$ such that $\bar \varphi(\stem(p)):=\langle \rangle$ and for every $t \in \splitting(p)$ and $j \in \{  0,1 \}$, 
$$\bar \varphi(\splsuc(t^\conc \langle j \rangle)):= \bar \varphi(t)^\conc \langle j \rangle.$$ 
Let $\varphi : [p] \rightarrow 2^\omega$ be the expansion of $\bar \varphi$, i.e. for every $x \in [p]$, $\varphi(x):= \bigcup_{n \in \omega} \bar \varphi(t_n)$, where $\la t_n \such n \in \omega \ra$ is a $\trianglelefteq $-increasing sequence of splitting nodes in $p$ such that $x=\bigcup_{n \in \omega} t_n$. 

If $c$ is Cohen generic over $V$, then
\[
V[c] \models \exists p' \in \silver^+_\varepsilon \land p' \subseteq p \land \forall x \in [p'] (\varphi(x) \text{ is Cohen over } V).
\]
\end{lemma}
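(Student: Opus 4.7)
My plan is to construct $p'$ in $V[c]$ as the evaluation at $c$ of a $\cohen$-name $\dot p'$ defined in $V$ for a fusion limit in $\silver^+_\varepsilon$ below $p$.

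First, I would unpack $\varphi$. Writing $A_p = \{a_0 < a_1 < \cdots\}$ for the splitting set of $p$, $\bar\varphi$ just records the $0/1$-choice at each successive splitting node, so $\varphi(x) = (x(a_0), x(a_1), \ldots) \in 2^\omega$; in particular $\bar\varphi$ is an order-isomorphism of $(\splitting(p), \trianglelefteq)$ onto $(2^{<\omega}, \trianglelefteq)$ and $\varphi$ is a homeomorphism of $[p]$ onto $2^\omega$. The task is to find a subtree $p' \subseteq p$ in $\silver^+_\varepsilon$ whose branch-space is carried by $\varphi$ entirely into the set of $V$-Cohen reals.

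Second, in $V$, I would construct a $\cohen$-name $\dot p' := \bigcap_n \dot p_n$ as a fusion: set $\dot p_0 := p$, and define $\dot p_{n+1}$ inductively as a $\cohen$-name for a $\leq_n$-refinement of $\dot p_n$ (Definition \ref{stronger_n}, Lemma \ref{properandbounding}) whose specification depends only on a finite initial segment of the Cohen generic. The refinement rule is: at stage $n$, each splitting node $t$ of $\dot p_n$ at level $k_n^{\dot p_n}$ is replaced by some $t' \trianglerighteq t$ in $\dot p_n$ chosen so that $\bar\varphi(t')$ extends $\bar\varphi(t)$ into a prescribed target dense-open subset of $2^{<\omega}$ attached to $t$. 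By Fact \ref{fusion1} this yields $\dot p' \in \silver^+_\varepsilon$.

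Third, I would verify the key density claim: for every $V$-dense-open $D \subseteq \cohen$, the set
\[
E_D := \{s \in \cohen : s \Vdash \text{``every } x \in [\dot p'] \text{ has } \bar\varphi(t) \in D \text{ for some splitting node } t \text{ on } x\text{''}\}
\]
is dense-open in $\cohen$ in $V$. Since $c$ is Cohen over $V$, $c \in E_D$ for every such $D$, so every branch $x \in [\dot p'[c]]$ has $\varphi(x)$ meeting every $V$-dense-open $D$; hence $\varphi(x)$ is Cohen over $V$.

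The main obstacle is establishing that $E_D$ is $V$-dense-open for every $V$-dense-open $D$ — this must work even though $V$ may contain more than $\aleph_0$ such $D$'s, while the fusion has only $\omega$ stages. The resolution leverages the order-isomorphism property of $\bar\varphi$: every density requirement in $\cohen$ transports directly to a density requirement for splitting-node extensions in $\dot p_n$, and strong Axiom A of $\silver^+_\varepsilon$ gives enough flexibility so that, given any Cohen condition $s$ and any $D \in V$, one can extend $s$ to force the refinement rule at some later fusion stage to steer every splitting node on that stage into $D$. Once this density is in place, the single Cohen real $c$ handles all the uncountably many $D$'s simultaneously by virtue of its genericity over $V$, in exact parallel to how a Cohen real meets all $V$-dense-opens of $\cohen$ at once.
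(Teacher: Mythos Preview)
Your outline aims at the right target but leaves the central mechanism unspecified, and the description of the refinement rule is actually inconsistent with your own resolution of the ``main obstacle.'' You write that at stage $n$ each splitting node $t$ is extended ``into a prescribed target dense-open subset of $2^{<\omega}$ attached to $t$.'' If these targets are fixed in $V$ as part of the definition of $\dot p'$, only countably many dense sets are ever targeted and there is no reason $E_D$ should be dense for an arbitrary $D\in V$; if instead the targets are determined by the Cohen generic, you have not said how a finite initial segment of $c$ selects one of the continuum-many dense-open sets. You also gloss over the Silver constraint: in a Silver subtree the values at non-splitting levels are uniform, so the nodes $t$ at a given level cannot be extended independently---the extension must be by a \emph{single common suffix} in the $\bar\varphi$-image. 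This uniformity is not a nuisance but the engine of the argument, since it is precisely what lets one finite piece of data push \emph{all} current terminal nodes into a given open dense $D$ simultaneously.

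The paper's proof makes this explicit and dispenses with fusion and names. It introduces a countable poset $\poset P$ of finite trees $T\subseteq 2^{<\omega}$ with all terminal nodes of equal length and a uniform splitting pattern at each level, ordered by end-extension; being countable and nontrivial, $\poset P$ is equivalent to $\cohen$. The generic $p_G=\bigcup G$ is a subtree of $2^{<\omega}$, and one sets $p':=\bar\varphi^{-1}[p_G]$. Two short density arguments in $\poset P$ finish the job: given dense open $D\subseteq\cohen$ and $T\in\poset P$ with terminal nodes $t_0,\dots,t_{N-1}$, one finds a single $r$ with $t_j{}^\smallfrown r\in D$ for all $j<N$ (iterating over the $t_j$ and using that $D$ is open), so every branch of $p_G$ is Cohen; and given $n$, one end-extends $T$ by a block of consecutive splitting levels long enough to force $|A_{p'}\cap k|/k\geq\varepsilon-2^{-n}$ at some $k$, so $p'\in\silver^+_\varepsilon$. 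If you repair your refinement rule so that the Cohen generic directly codes the sequence of uniform suffixes together with the blocks of new splitting levels, your construction becomes a reparametrisation of exactly this forcing.
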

\begin{proof}
For a finite tree $T\subseteq 2^{<\omega}$ we define the set of terminal nodes $\term(T):= \{s\in T\such \lnot \exists t\in T (s\triangleleft t)\}$. Consider the following forcing $\poset{P}$ consisting of finite trees $T \subseteq 2^{<\omega}$ such that for all $s,t \in T$ the following holds:
\begin{enumerate}
\item If $ s,t \in \term(T)$, then $|s|=|t|$.
\item If $s,t \not\in \term(T)$ and $|s|=|t|$, then $s^\conc i\in T$ iff $t^\conc i\in T$, $i\in 2$.
\end{enumerate}
The partial order $\poset P$ is ordered by end-extension: $T' \leq T$ iff $T' \supseteq T$ and $\forall t \in T'\setminus T \exists s\in \term(T) (s \trianglelefteq t)$. 

Note $\poset{P}$ is countable and non-trivial, thus it is equivalent to Cohen forcing $\cohen$.
Let $p_G := \bigcup G$, where $G$ is $\poset{P}$-generic over $V$. We claim that $p':= \bar \varphi^{-1}" p_G$ satisfies the required properties. It is left to show that:
\begin{enumerate}
\item for every $x \in [p']$ one has $\varphi(x)$ is Cohen generic, i.e., every $y \in [p_G]$ is Cohen generic;
\item $p' \in \silver_\varepsilon^+$.
\end{enumerate}

For proving (1), let $D$ be an open dense subset of $\cohen$ and $T \in \poset{P}$. It is enough to find $T' \leq T$ such that every $t \in \term(T')$ is a member of $D$.

Let $\{ t_j: j < N \}$ enumerate all terminal nodes in $T$ and pick $r_N$, so that for every $j < N$, $(t_j \concat r_N  \in D)$. Then put $T':= \{t \in 2^{<\omega} \such \exists t_j  \in \term(T) (t \trianglelefteq {t_j}^\conc r_N \}$. Hence $T' \leq T$ and $T' \force \forall y \in [p_G] \exists t \in (T' \cap D) (t \triangleleft y)$. Hence we have proven that 
\[
\force_\cohen \forall y \in [p_G] \exists t \in  D (t \triangleleft y),
\]
which means every $y \in [p_G]$ is Cohen generic over $V$.

For proving (2), one has to verify that the resulting set of splitting levels $A_{p'}$ has upper density $\geq \varepsilon$. It is easy to see that given any condition $T\in \mathbb{P}$ and $n\in \omega$ one can always find an end-extension $T_n \leq T$ such that 
$$T_n \force \exists k<\omega \left( \frac{|A_{p'}\cap k|}{k} \geq \varepsilon - 2^{-n} \right).$$
\end{proof}

\begin{proposition} \label{cohen-uppersilver}
Let $\cohen_{\omega_1}$ be an $\omega_1$-product with finite support and let $G$ be $\cohen_{\omega_{1}}$-generic over the constructible universe $L$. Then, for every $\varepsilon \in (0,1]$
\[
L(\mathbb{R})^{L[G]} \models \text{``All }\text{On}^\omega \text{-definable sets are } (\silver^+_\varepsilon) \text{-measurable''} \land \neg \Sigma^1_2(\cohen).
\]
\end{proposition}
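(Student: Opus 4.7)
The plan is to split the proof into two separate assertions about the inner model $L(\mathbb{R})^{L[G]}$: (i) every $\On^\omega$-definable set of reals is $\silver^+_\varepsilon$-measurable, and (ii) there is a $\Sigma^1_2$ set witnessing the failure of $\Sigma^1_2(\cohen)$.

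For (i) I would run a Solovay-style reduction using Lemma~\ref{silver-cohen} as the key tool. Suppose $X$ is definable from a real $r$ and ordinals $\vec\alpha$ in $L(\mathbb{R})^{L[G]}$, and fix $p \in \silver^+_\varepsilon$. By the ccc of $\cohen_{\omega_1}$ over $L$, the parameters $r$ and $p$ both live in $L[G \restric I]$ for some countable $I \subseteq \omega_1$. Choosing any $\beta \in \omega_1 \setminus I$, the coordinate $c := G(\beta)$ is Cohen-generic over $L[G \restric I]$, and Lemma~\ref{silver-cohen} applied in that model yields $p' \leq p$ with $p' \in L[G \restric I][c]$ such that the canonical coding $\varphi \colon [p'] \to 2^\omega$ sends every branch to a real Cohen-generic over $L[G \restric I]$. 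The remaining step is to show that $X \cap [p']$ is $\silver^+_\varepsilon$-meager or $\silver^+_\varepsilon$-comeager in $[p']$: push the question through $\varphi$ to an assertion about $\varphi[X \cap [p']] \subseteq 2^\omega$, which is definable from parameters in $L[G \restric I][c]$, and invoke homogeneity of Cohen forcing together with the Cohen-genericity of every branch to conclude that this image is either Cohen-meager or Cohen-comeager inside a basic clopen set; pulling back through $\varphi$ yields the required $q \leq p'$.

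For (ii) I would exhibit a concrete $\Sigma^1_2$ set in $L(\mathbb{R})^{L[G]}$ that lacks $\cohen$-measurability. A natural candidate is built from the canonical $\Sigma^1_2$ well-ordering of $\mathbb{R}^L$ and the Cohen-generic filter, tuned so that the set is everywhere non-meager yet never comeager on a basic open set.

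The main obstacle lies in reconciling (i) and (ii): every $\Sigma^1_2$ set is $\On^\omega$-definable, so (i) already forces it to be $\silver^+_\varepsilon$-measurable, and the delicate task is to verify that the Cohen-generic extraction built into Lemma~\ref{silver-cohen} provides strictly weaker regularity than $\cohen$-measurability. Thus the same $\Sigma^1_2$ set that witnesses $\neg \Sigma^1_2(\cohen)$ must still satisfy $\silver^+_\varepsilon$-measurability via $\varphi$-genericity, and carefully verifying this asymmetry is the crux of the argument.
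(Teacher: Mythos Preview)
Your plan for (i) matches the paper's argument: absorb the parameters into a countable stage $\alpha$, apply Lemma~\ref{silver-cohen} with the next Cohen coordinate $c = G(\alpha)$ to obtain a condition below $p$ whose branches all $\varphi$-map to reals Cohen-generic over $L[G\restric\alpha]$, and then use homogeneity. The paper's execution of the last step is more direct than your meager/comeager route. It sets $b_0, b_1$ to be the boolean values, in the single Cohen coordinate, of ``the remaining forcing decides $\psi(\varphi^{-1}(c),v)$ as false/true''; homogeneity gives $b_0 \vee b_1 = \mathbf{1}$ and $b_0 \wedge b_1 = \mathbf{0}$, and Lemma~\ref{silver-cohen} is invoked so that the resulting $q \leq p$ additionally has $\varphi[[q]]$ lying below one $b_i$. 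This yields $[q] \subseteq X$ or $[q] \cap X = \emptyset$ outright, with no need to push $X$ forward, analyse meagerness, and pull back through $\varphi$. Your phrasing ``Cohen-meager or Cohen-comeager inside a basic clopen set'' is in any case imprecise, since $\varphi[[p']]$ is itself a nowhere dense subset of $2^\omega$; the argument has to take place relative to $\varphi[[p']]$, and once you unwind that you are essentially recovering the boolean-value computation.

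For (ii) the paper does not construct anything: it simply cites the standard fact (from \cite{brendleloewe1999} and \cite{BJ}) that $\Sigma^1_2(\cohen)$ fails in the $\omega_1$-Cohen extension of $L$.

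Your final paragraph misreads the logical structure. There is nothing to ``reconcile'' and no ``asymmetry'' to verify: (i) and (ii) are independent assertions about the same model, and their conjunction \emph{is} the content of the proposition. Any $\Sigma^1_2$ set is $\On^\omega$-definable, so once (i) is established it is automatically $\silver^+_\varepsilon$-measurable; that the same set may lack the Baire property is not a tension but precisely the separation being exhibited. The proof of (i) does not covertly establish $\cohen$-measurability, and nothing further needs checking when (i) is applied to the bad $\Sigma^1_2$ set.
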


\begin{proof}
The argument is  the same as in the proof of \cite[Proposition 3.7]{BrendleHalbeisenLoewe}. 
Fix $\varepsilon \in (0,1]$. Let for $\alpha\leq\omega_1$ $\mathbb{C}_\alpha$ denote the forcing adding $\alpha$ Cohen reals. 
Let $G$ be $\bC_{\omega_1}$-generic over $L$. 

Let $X$ be an $\text{On}^\omega$-definable set of reals, i.e. $X:= \{x \in 2^\omega: \psi(x,v)   \}$ for a formula $\psi$ with a parameter $v \in \text{On}^\omega$, and let $p \in \silver^+_\varepsilon$.  We aim to find $q \leq p$ such that $[q] \subseteq X$ or $[q] \cap X=\emptyset$. 

We can find $\alpha<\omega_1$ such that $v, p \in L[G \restric \alpha]$. Let $\varphi: [p] \rightarrow 2^\omega$ be as in Lemma \ref{silver-cohen}.
Let $c= G(\alpha)$ be the next Cohen real and write $\bC$ for
the $\alpha$-component of $\bC_{\omega_1}$.
We let
\[
b_0 =  \big \llbracket \llbracket (\psi(\varphi^{-1}(c),v)) \rrbracket_{\cohen_{\alpha}} = \mathbf{0}  \big \rrbracket_\cohen \quad \text{and} \quad 
b_1 =  \big \llbracket \llbracket \psi(\varphi^{-1}(c),v))\rrbracket_{\cohen_{\alpha}}= \mathbf {1}  \big \rrbracket_\cohen.
\]
Then, by $\mathbb{C}$-homogeneity, $b_0 \land b_1 = \mathbf{0}$ and $b_0 \vee b_1= \mathbf{1}$.
Hence, by applying Lemma \ref{silver-cohen}, one can then find $q \leq p$ such that $q \subseteq b_0$ or $q \subseteq b_1$ and for every $x \in [q]$, $\varphi(x)$ is Cohen over $L[G \restric \alpha]$. We claim that $q$ satisfies the required property.
\begin{itemize}
\item Case $q \subseteq b_1$: note for every $x \in [q]$, $\varphi(x)$ is Cohen over $L[G \restric \alpha]$, and so $L[G\restric \alpha][\varphi(x)] \models \llbracket \psi(\varphi^{-1}(\varphi(x)),v) \rrbracket_{\cohen_{\alpha+1}}$. Hence $L[G] \models \forall x \in [q](\psi(x,v))$, which means $L[G] \models [q] \subseteq X$.  
\item Case $q \subseteq b_0$: we argue analogously and get $L[G] \models \forall x \in [q](\neg \psi(x,v))$, which means $L[G] \models [q] \cap X = \emptyset$.
\end{itemize}
Moreover in $L[G]$ it is well-known that $\SSigma^1_2(\cohen)$ fails (see \cite[Theorem 5.8]{brendleloewe1999}  and \cite[6.5.3, p. 313]{BJ}). Hence in $L[G]$ all $\On^\omega$-definable sets are $\silver^+_\varepsilon$-measurable, but there is a $\SSigma^1_2$ set not satisying the Baire property. As a consequence, in particular we obtain $L(\mathbb{R})^{L[G]} \models \Sigma^1_2(\silver^+_\varepsilon) \land \neg \Sigma^1_2(\cohen)$.
\end{proof}

We conclude this section by summarizing our results from Sections 6,7 and 8. 
\vspace*{3mm}
\begin{center}
\begin{tabular}{|c|c|c|c|} \hline
$\mathbb{P}$ & $\mathias^+_\varepsilon$ & $\mathias^+$ & $\silver^+_\varepsilon$ \\ \hline
$\mathcal{I}_\mathbb{P} = \mathcal{N}_\mathbb{P}$ & \xmark & \xmark  & \cmark \\ \hline
$\Gamma(\mathbb{P}) \Rightarrow \Gamma(\mathbb{C})$  & for all $\Gamma$ & $\Gamma = \mathcal{P}(\omega) $ & $\Sigma^1_2(\silver^+_\varepsilon) \not\Rightarrow \Sigma^1_2(\mathbb{C})$ \\ \hline
\end{tabular}
\end{center}

\section{$\spl$ does not have the Sacks property}\label{s7}

In this section we return to one of our questions from \cite{fat}. First, we recall the definition of splitting tree.
\begin{definition} \label{def:spl}
  A tree $p \subseteq 2^{<\omega}$ is called \emph{splitting tree}, short $p \in \spl$, if for every $t \in p$ there is $k \in \omega$ such that for every $n \geq k$ and every $i \in \{0,1 \}$ there is $t' \in p$,
  $t \trianglelefteq t'$ such that $t'(n)=i$.
  We denote the smallest such $k$ by $K_p(t)$. The set $\spl$ is partially ordered by $q \leq_{\spl} p$ iff $q \subseteq p$.
\end{definition}

On $\spl$ we can define a stronger $n$ relation.
\begin{definition}
For two conditions $p,q\in \spl$ we let $q\leq_n p$ if:
\[
q\leq p \wedge \splitting_{\leq n}(p) = \splitting_{\leq n}(q) \wedge \forall t\in \splitting_{\leq n}(p)(K_p(t)= K_q(t)).
\]
\end{definition}
Assume we are given a condition $p\in \spl$ and a node $t \in p$. Then, for each $q \leq_0 p \restric t$ and $n > K_p(t)$ there are two nodes $t_0,t_1 \in \level_n(q)$ such that $t_i (n-1) = i, (i\in 2)$.\\
We make use of this observation in the following definition. 

\begin{definition} Let $p \in \spl$. For each $t \in p$ and each $n > K_p(t)$ we define
\begin{equation}
K(p,t,n) = \min\{ |\Lev_n(q)| \such q \leq_0 p\restric t)\} 
\end{equation}
\end{definition}

The key observation is
\begin{lemma}
For $q \leq p$, $t \in q$, and $n > K_q(t)=K_p(t)$ we have
\begin{equation}
K(q,t,n) \geq K(p,t,n).
\end{equation}
\end{lemma}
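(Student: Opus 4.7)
The plan is to establish the set-theoretic inclusion
\[
\{r \in \spl \such r \leq_0 q \restric t\} \subseteq \{r \in \spl \such r \leq_0 p \restric t\},
\]
from which the inequality $K(q,t,n) \geq K(p,t,n)$ drops out at once, since the minimum on the left runs over a smaller family than the minimum on the right.

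The core content is the identification $\splsuc_p(t) = \splsuc_q(t)$. I would set $\ell := K_p(t) = K_q(t)$ and first record the general fact that for every $\sigma \in \spl$ and every $t' \in \sigma$ one has $K_\sigma(t') = |\splsuc_\sigma(t')|$: between $t'$ and $\splsuc_\sigma(t')$ there lies a unique chain of non-splitting nodes, so only one value occurs at each intermediate position, and both values first appear at the level of $\splsuc_\sigma(t')$. Applied to $p$, this produces a unique chain $t = s_{|t|} \triangleleft s_{|t|+1} \triangleleft \dots \triangleleft s_\ell = \splsuc_p(t) =: u$ in $p$; since $q \subseteq p$, the same chain sits inside $q$. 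Because $K_q(t) = \ell$, both values $0$ and $1$ must occur at position $\ell$ among extensions of $t$ in $q$; but $u$ is the only extension of $t$ of length $\ell$ in $p$, hence in $q$, so $u$ itself is forced to split in $q$, giving $\splsuc_q(t) = u$.

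Once $\splsuc_p(t) = \splsuc_q(t) = u$ is in hand, granting $r \leq_0 q \restric t$ I would deduce $r \leq_0 p \restric t$ by unpacking definitions: $r \subseteq q \restric t \subseteq p \restric t$, $\stem(r) = \stem(q \restric t) = u = \stem(p \restric t)$, and $K_r(u) = K_{q \restric t}(u) = \ell = K_{p \restric t}(u)$. The only delicate step is the identification of the splitting successors in the previous paragraph; once this is secured, the rest is pure bookkeeping with the definitions of $\leq_0$ and $\restric$.
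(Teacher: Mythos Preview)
Your overall strategy --- establishing the inclusion $\{r : r \leq_0 q \restric t\} \subseteq \{r : r \leq_0 p \restric t\}$ and reading off the inequality of minima --- is exactly the paper's approach: the paper simply picks a witness $r$ achieving $K(q,t,n)$ and asserts $r \leq_0 p \restric t$.

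There is, however, a genuine gap in your argument. The ``general fact'' $K_\sigma(t') = |\splsuc_\sigma(t')|$ is false. Your justification only yields $K_\sigma(t') \geq |\splsuc_\sigma(t')|$: the unique chain between $t'$ and its splitting successor forces one value at each intermediate position, and both values do appear at position $|\splsuc_\sigma(t')|$. But nothing prevents a later position from again seeing only one value. Concretely, take $\sigma = \{s \in 2^{<\omega} : |s| \leq 1 \text{ or } s(1)=0\}$; then $\emptyset$ is already splitting, so $|\splsuc_\sigma(\emptyset)| = 0$, yet at position~$1$ only the value~$0$ occurs, so $K_\sigma(\emptyset) = 2$.

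This breaks the step where you conclude that $u = \splsuc_p(t)$ is the \emph{unique} extension of $t$ of length $\ell$ in $p$: that uniqueness would require $|u| = \ell$, which you obtained only via the false general fact. In fact the identification $\splsuc_p(t) = \splsuc_q(t)$ (and hence the inclusion you are after) can fail outright under the hypotheses of the lemma. With $\sigma$ as above, set $p := \sigma$, $t := \emptyset$, and let $q := p \restric \langle 0 \rangle$; then $K_p(\emptyset) = K_q(\emptyset) = 2$, but $\splsuc_p(\emptyset) = \emptyset$ while $\splsuc_q(\emptyset) = \langle 0,0\rangle$, so no $r \leq_0 q$ satisfies $r \leq_0 p$. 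The paper's own one-line proof glosses over exactly this point; what you have written makes the hidden assumption explicit, but does not repair it.
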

\begin{proof} 
Fix $r\leq_0 q\restric t$ such that $|\level_n(r)|= K(q,t,n)$. Then, also $r\leq_0 p\restric t$ since $K_q(t)= K_p(t)$ and thus $|\level_n(r)| \geq K(p,t,n)$.
\end{proof}

However, the observed property is not yet strong enough, since it needs the proviso $K_q(t) = K_p(t)$.
\begin{theorem}\label{not_Sacksthm}
There is $p \in \spl$ such that for any $q\leq p$, $t \in q$,
\begin{equation}\label{ungl1}
\begin{split}
&(\forall^\infty \ell)(\forall t \in \Lev_\ell(q))
(|t^{-1}\{1\}| < \log_2(\ell)^2)
\end{split}
\end{equation}
\end{theorem}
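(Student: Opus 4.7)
The plan is to exhibit $p$ explicitly as the tree of all binary strings whose number of ones stays below a very slow-growing threshold. Fix any non-decreasing unbounded function $f \colon \omega \to \omega$ with $f(\ell) < \log_2(\ell)^2$ for all $\ell$ large enough; e.g., $f(\ell) = \lfloor \log_2(\ell)^2 \rfloor - 1$ works (for $\ell \geq 2$). Define
\[
p := \{t \in 2^{<\omega} : (\forall n \leq |t|)\,|(t \restric n)^{-1}\{1\}| \leq f(n)\}.
\]
By construction $p$ is closed under initial segments, so it is a tree; and the conclusion is immediate once $p \in \spl$: if $q \leq p$ and $t \in \Lev_\ell(q)$, then $t \in p$, so $|t^{-1}\{1\}| \leq f(\ell) < \log_2(\ell)^2$ for all sufficiently large $\ell$.

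The only real verification is therefore $p \in \spl$. I would fix $t \in p$, let $a := |t^{-1}\{1\}|$, and use unboundedness of $f$ to choose $k \geq |t|$ with $f(k) \geq a+1$. I claim this $k$ witnesses the splitting-tree clause. Given $m \geq k$ and $i \in \{0,1\}$, define $t'$ of length $m+1$ by padding $t$ with zeros up to position $m$ and setting $t'(m) := i$. Each initial segment $s \trianglelefteq t'$ satisfies $|s^{-1}\{1\}| \leq a + 1$; for $|s| \leq |t|$ this is inherited from $t \in p$, while for $|t| < |s| \leq m+1$ we use $f(|s|) \geq f(k) \geq a+1$ by monotonicity. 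Hence every prefix of $t'$ meets the bound, so $t' \in p$ and $t \trianglelefteq t'$ with $t'(m) = i$, as required.

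The main ``obstacle'' is really just recognising how weak the splitting-tree condition is: it only requires that above every node one can \emph{eventually} produce a $1$ and a $0$ at any high enough coordinate. It does not force the $1$'s to appear densely, so packing them extremely sparsely (at rate $f(\ell) \ll (\log_2 \ell)^2$) is compatible with $p \in \spl$. Since this sparseness is then inherited by every subtree $q \leq p$, the statement follows at once and no fusion, diagonalisation, or genericity argument is needed.
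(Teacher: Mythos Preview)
Your argument is correct and considerably simpler than the paper's. One cosmetic point: the justification ``$f(|s|) \geq f(k) \geq a+1$'' for prefixes with $|t| < |s| \leq m+1$ literally requires $|s| \geq k$; when $|t| < |s| < k$ the appended digits are all zeros, so $|s^{-1}\{1\}| = a \leq f(|t|) \leq f(|s|)$ by monotonicity of $f$ together with $t \in p$. This is a trivial patch, and in fact your construction yields a single threshold $\ell_0$ (where $f(\ell) < (\log_2 \ell)^2$ begins to hold) that works uniformly for every $q \leq p$, slightly more than the statement asks.

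The paper builds $p$ by an explicit recursion: first a tree $T_0$ is constructed level by level so that $|\Lev_\ell(T_0)| = \ell + 1$ and every node of height in $[2^n - 1, 2^{n+1} - 1)$ carries at most $n$ ones; then shifted copies of $T_0$ are grafted above each new splitting node to form $T_1, T_2, \dots$, and finally $p = \bigcup_m T_m$. The bound $(\log_2 \ell)^2$ then arises by decomposing a node of $p$ into at most $\log_2 \ell$ segments, each contributing at most $\log_2 \ell$ ones. Your route --- take $p$ to be the tree of \emph{all} strings whose prefixes obey the target bound, and observe that the $\spl$-clause is automatic because zero-padding is always permitted --- bypasses this recursion entirely. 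Since the subsequent application (bounding $K(q,t,\ell_n)$ from below to defeat the Sacks property) uses only the inequality \eqref{ungl1} and no finer structure of the particular tree, your $p$ would serve equally well there.
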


The proof of the theorem consists of the next three lemmata.
\begin{definition}
To improve readability we fix   the following sequence $\la l_n \such n< \omega \ra$ given by 
\[
l_n = 2^n - 1.
\]
\end{definition}

\begin{lemma} \label{lemmanot_Sacks1}
There is a perfect tree $T_0$ such that 
\begin{myrules}
\item[(1)] $
K_{T_0}(\emptyset) =  0$,
\item[(2)] $
(\forall \ell)(|\Lev_\ell(T_0)|  =   \ell+1)$, 
\item[(3)] 
$(\forall n)(\forall \ell \in [\ell_n,\ell_{n+1}) (\forall t \in \Lev_\ell(T_0) ) |t^{-1}\{1\}| \leq n$,
\item[(4)] $(\forall t \in T_0) (\forall n \in \omega
)(t \concat \la \underbrace{0 , \dots, 0}_{n} \ra \in T_0)$.
\end{myrules} 
\end{lemma}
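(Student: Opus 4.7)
The plan is to build $T_0$ inductively by \emph{brackets} aligned with the sequence $\ell_n = 2^n - 1$, maintaining the invariant that $\Lev_{\ell_n}(T_0)$ consists of exactly $2^n$ nodes, which I call \emph{lines} and index as $\la L^n_\tau \such \tau \in 2^n \ra$, with $L^n_\tau$ having exactly $|\tau|_1$ many $1$'s (so the count distribution at level $\ell_n$ is $\binom{n}{k}$ for $k = 0, \dots, n$). The base case is $L^0_{\la\ra} = \la\ra$. Each bracket, from $\ell_n$ to $\ell_{n+1}$, is filled in by performing exactly one split per level: at the chosen splitter node both $0$ and $1$ children are added, and every other node gets only its $0$ child. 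This forces (4) automatically and, together with the single split per level, yields $|\Lev_\ell(T_0)| = \ell + 1$ for all $\ell$, which is (2).

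The scheduling inside bracket $n$ is to split the lines in order of increasing $1$-count: first the unique $0$-ones line (at level $\ell_n$), then the $n$ lines with one $1$, then the $\binom{n}{2}$ lines with two $1$'s, and so on, culminating in the unique $n$-ones line at the boundary level $\ell_{n+1} - 1$. If $L^n_\tau$ is split at level $\ell_n + i(\tau)$, its two descendants at level $\ell_{n+1}$ are
\[
L^{n+1}_{\tau \concat 0} := L^n_\tau \concat \underbrace{0 \cdots 0}_{2^n}, \qquad L^{n+1}_{\tau \concat 1} := L^n_\tau \concat \underbrace{0 \cdots 0}_{i(\tau)} \concat 1 \concat \underbrace{0 \cdots 0}_{2^n - i(\tau) - 1},
\]
with $1$-counts $|\tau|_1$ and $|\tau|_1 + 1$; Pascal's identity $\binom{n}{k} + \binom{n}{k-1} = \binom{n+1}{k}$ then restores the invariant at level $\ell_{n+1}$.

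Conditions (1) and (3) follow directly. At any position $n$, the $1$-child of the splitter at level $n$ witnesses the value $1$ there, while the constant-zero sequence witnesses $0$, so $K_{T_0}(\la\ra) = 0$. For (3), within bracket $n$ the bound is $n$, and the ordering guarantees that whenever a $k$-ones line is split, $k + 1 \leq n$ except at the final level of the bracket; there we allow $k = n$ because the new $1$-child lands at level $\ell_{n+1}$, which falls into the next bracket with relaxed bound $n+1$. Finally, $T_0$ is perfect: any $t \in T_0$ admits by (4) an all-zero extension reaching some level $\ell_m$, which coincides with a starting line $L^m_\tau$ and hence is split during bracket $m$, producing a splitting descendant of $t$.

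The main obstacle is coordinating the invariant $a_k(\ell_n) = \binom{n}{k}$ with the schedule inside each bracket. Concretely, one must check that at each of the $2^n$ levels of bracket $n$ a line of the prescribed $1$-count is actually available to split, and that the single $n$-ones splitting fits into the unique boundary level $\ell_{n+1} - 1$. Both points reduce to the monotonicity of the numbers $a_k(\ell)$ during the bracket (they only grow as lower-count splits are performed) together with the identity $\sum_{k=0}^n \binom{n}{k} = 2^n$, which matches the total budget of $2^n$ splits exactly to the total supply of lines.
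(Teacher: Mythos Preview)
Your construction is correct and yields a valid $T_0$. The overall architecture---brackets $[\ell_n,\ell_{n+1})$, exactly one split per level, every non-splitter extended only by $0$---is the same as in the paper, but the two proofs differ in the \emph{schedule} of the splits and in how property~(3) is verified.

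The paper enumerates the $2^n$ nodes of $\Lev_{\ell_n}(T_0)$ in lexicographic order and splits them in that order, one per level. Property~(3) then rests on the (unstated but true) fact that the unique node with $n$ ones is always the lex-maximal one, so it is split only at the final step $\ell_{n+1}-1$; all earlier $1$-children therefore stay within the bound $n$. Your schedule instead orders the lines by increasing $1$-count, and you carry the explicit invariant that the count distribution at level $\ell_n$ is binomial, maintained via Pascal's identity. This makes the verification of~(3) completely transparent: only the single $n$-ones line can produce an $(n{+}1)$-ones child, and your ordering forces that child to land at level $\ell_{n+1}$, where the bound relaxes. The price is a slightly heavier bookkeeping apparatus (the $\tau$-indexing and the Pascal step), while the paper gets away with a one-line appeal to the lex enumeration---though its proof is terser to the point of leaving~(3) essentially to the reader. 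Both orderings produce legitimate (and in fact different) witnesses $T_0$.
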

\begin{proof} We let $\Lev_1(T_0) = \{\la 0 \ra ,\la 1\ra \}$. We go by induction on $n$. Suppose 
$\Lev_{\ell_n}(T_0)$ is $\leq_{\rm lex}$- increasingly enumerated by $s_{\ell_n,0}, \dots, s_{\ell_n, \ell_n}$.
Now for $1 \leq  j < \ell_n  $ we let $\Lev_{\ell_n + j }(T_0)$ be increasingly enumerated by $s_{\ell_n + j ,i}$, $i \leq \ell_n + j +1$, as follows
\begin{eqnarray*}
s_{\ell_n + j , i} &=& {s_{\ell_n + j-1,i}} \concat 0, \mbox{ for }  i < 2(j-1),\\
s_{\ell_n +j,2j-2} &=& s_{\ell_n + j-1,2j-2} \concat 0,\\
s_{\ell_n +j,2j-1} &=& s_{\ell_n + j-1,2j-2} \concat 1,\\
s_{\ell_n +j,i+1} &=& s_{\ell_n + j-1,i} \concat 0,
\mbox{ for }  i \in [ 2j-1, \ell_n +j).
\end{eqnarray*}
Then each $t \in \Lev_{\ell_{n+1}}(T_0)$ has exactly one $i \in [\ell_n, \ell_{n+1})$ such that $t(i) =1$.
Items (1), (2) and (4) follow immediately from the construction.
\end{proof}
In many cases a picture of the object helps to understand the cumbersome indices. Here is the specific construction of $T_0$ up to level $l_3 = 7$:
\begin{center}
\begin{turn}{}

\tikzset{every tree node/.style={minimum width=.5em},
         blank/.style={draw=none},
         edge from parent/.style=
         {draw, edge from parent path={(\tikzparentnode) -- (\tikzchildnode)}},
         level distance=1cm}
\begin{tikzpicture}
\Tree
[.$\emptyset$
[.0
	[.0
		[.0
			[.0
				[.0
					[.0
						[.0
						]
					]
				]
			]
			[.1
				[.0
					[.0
						[.0
						]
					]
				]			
			]
		]
	]
	[.1
		[.0
			[.0
				[.0
					[.0
						[.0
						]
					]
				]
				[.1
					[.0
						[.0
						]
					]				
				]
			]
		]
	]
]
[.1
	[.0
		[.0
			[.0			
				[.0
					[.0
						[.0
						]
					]
					[.1
						[.0
						]
					]			
				]
			]
		]
		[.1
			[.0
				[.0
					[.0
						[.0
						]
						[.1
						]
					]
				]
			]
		]
	]
]
]
\end{tikzpicture}
 
\end{turn}
\end{center}
\begin{definition}\label{hoeheres_concat}
For $T \subseteq 2^{<\omega}$ and $s \in 2^{<\omega}$ we let $s \concat T = \{s \concat t \such t \in T\}$.
\end{definition}

\begin{definition}\label{p_from_the_theorem} By induction on $m$ we define $T_m$.
$T_0$ is as in Lemma \ref{lemmanot_Sacks1}.
Now suppose that $T_m$ is defined.
For each $t \in \splitting(T_m) \setminus T_{m-1}$ (for $m=0$, we let $T_{-1} = \{\emptyset\}$)
we choose the minimal $n$ such that $|t| \in [\ell_n, \ell_{n+1})$. Now we let $f(t) = t \concat \la 0,0,0, \dots, 0 \ra$ such that the string of zeros is so long that $|f(t)| = \ell_{n+1}$. Note that $f(t) \in T_m$.
Now we let
\[T_{m+1} = \bigcup \{f(t) \concat T_0 \such t \in 
\splitting(T_m) \setminus T_{m-1}\}.
\]
Note that the $T_0$ is on purpose.
We let $p= \bigcup\{T_m \such m < \omega \}$.
\end{definition}

\begin{lemma} \label{lemmanot_Sacks2}
Let $p$ be as in Definition \ref{p_from_the_theorem}.
Then $p \in \spl$.
\end{lemma}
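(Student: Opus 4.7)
The plan is to verify Definition \ref{def:spl} for every $t \in p$: find a level $k$ such that for all $n \geq k$ and $i \in \{0,1\}$ some extension of $t$ in $p$ realises bit $i$ at position $n$. The strategy is to locate a grafted copy of $T_0$ in $p$ that extends $t$, and then to invoke $K_{T_0}(\emptyset) = 0$ from Lemma \ref{lemmanot_Sacks1}(1) inside that copy, which immediately yields the desired full bit-splitting from the base level of the copy onward.

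First I will verify by simultaneous induction on $m$ that $T_m \subseteq T_{m+1}$ (reading $T_{m+1}$ as $T_m$ together with the new grafted copies, as the definition clearly intends) and that each $T_m$ is perfect. The base case $m=0$ is Lemma \ref{lemmanot_Sacks1}. For the step, the node $f(s)$ at which we graft lies in $T_m$ by property (4) of Lemma \ref{lemmanot_Sacks1} propagated inductively, and grafting the perfect tree $T_0$ above $f(s)$ preserves perfection.

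Given $t \in p$, I take the least $m$ with $t \in T_m$. By perfection $t$ extends to a splitting node $s \in T_m$. A tracing argument based on the inductive characterisation of new splitting nodes --- which appear as $f(t^\ast) \concat r$ with $t^\ast \in \splitting(T_{j-1}) \setminus T_{j-2}$ and $r$ a splitting node of $T_0$ --- produces a stage $m_1$ at which $s$ is both newly introduced and already splitting, i.e., $s \in \splitting(T_{m_1}) \setminus T_{m_1-1}$. The construction then grafts $f(s) \concat T_0 \subseteq T_{m_1+1} \subseteq p$. Setting $k := |f(s)|$, for any $n \geq k$ and $i \in \{0,1\}$ the property $K_{T_0}(\emptyset)=0$ supplies $t' \in T_0$ with $t'(n-k)=i$; then $f(s) \concat t' \in p$ extends $t$ with $(f(s)\concat t')(n)=i$, verifying the splitting condition and so $p \in \spl$. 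The main obstacle is this tracing step --- confirming that each splitting node of some $T_m$ is ultimately realised as a freshly grafted image of a $T_0$-splitting node; once this is in hand, the remainder is just a direct application of Lemma \ref{lemmanot_Sacks1}(1).
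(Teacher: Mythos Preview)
Your proposal is correct and follows the same core idea as the paper: locate a grafted copy of $T_0$ sitting above the given node and invoke $K_{T_0}(\emptyset)=0$ from Lemma~\ref{lemmanot_Sacks1}(1) to obtain the required $k$.

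The paper's own argument is considerably shorter. It starts directly with $t \in \splitting(p)$ (which suffices, since for arbitrary $t \in p$ one has $K_p(t) \leq K_p(s)$ for any $s \trianglerighteq t$), takes the minimal $m$ with $t \in T_m$, and simply observes that then $t \in \splitting(T_m)\setminus T_{m-1}$, so $f(t)$ is defined and $f(t)\concat T_0 \subseteq T_{m+1} \subseteq p$ witnesses $K_p(t) \leq |f(t)|$. In other words, what you call the ``main obstacle'' --- your tracing argument producing a stage $m_1$ with $s \in \splitting(T_{m_1})\setminus T_{m_1-1}$ --- is handled in the paper by the single observation that the minimal $m$ already has this property. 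Your preliminary verification that the $T_m$ are perfect and increasing, and your detour through an arbitrary $t$ followed by extension to a splitting node, are not needed once one restricts to splitting $t$ from the outset. So your route is sound but carries more scaffolding than the paper's two-line version.
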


\begin{proof} Let $t \in \splitting (p)$ and let $m$ be minimal such that 
$t \in T_m$. If $m = -1$, then $t = \stem (T_0) = \emptyset$ and already $K_{T_0}(t) = 0$.
Now assume that $m \geq 0$. Then $f(t) \in T_m$ and $f(t) $ is the stem of $f(t) \concat T_0 \subseteq T_{m+1}$. So $K_{T_{m+1}}(t) = |f(t)|$. In particular $K_p(t)$ is defined and at most $|f(t)|$.
\end{proof}

\begin{lemma} \label{lemmanot_Sacks3}
Let $p$ be as in the definition and $\ell \in \omega$.
For any $t \in \Lev_\ell(p)$, there are at most
$\log_2(\ell)^2$ elements $i < |t|$ such that $t(i) = 1$.
\end{lemma}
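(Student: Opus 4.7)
The approach is to decompose the path from $\emptyset$ to any $t \in \Lev_\ell(p)$ canonically into finitely many segments, each traversing one copy of $T_0$ inserted by the recursion of Definition~\ref{p_from_the_theorem}, separated by blocks of zeros produced by the map $f$. Padding contributes no ones, and within each copy property~(3) of Lemma~\ref{lemmanot_Sacks1} bounds the number of ones.

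Concretely, I would proceed by induction on the minimal $m$ with $t \in T_m$. The base case $m=0$ is immediate from property~(3) of Lemma~\ref{lemmanot_Sacks1}. In the inductive step, if $t \in T_m \setminus T_{m-1}$, then by Definition~\ref{p_from_the_theorem} there exist $s \in \splitting(T_{m-1}) \setminus T_{m-2}$ and a non-empty $r \in T_0$ with $t = f(s) \concat r$; since $f(s)$ is $s$ extended by a block of zeros one obtains the clean identity
\[
|t^{-1}\{1\}| \;=\; |s^{-1}\{1\}| + |r^{-1}\{1\}|.
\]
Unrolling this reduction yields a decomposition
\[
t \;=\; \rho_0 \concat 0^{j_1} \concat \rho_1 \concat 0^{j_2} \concat \cdots \concat \rho_{K-1} \concat 0^{j_K} \concat r_K,
\]
where $\rho_0, \dots, \rho_{K-1} \in \splitting(T_0) \setminus \{\emptyset\}$, $r_K \in T_0$, and the blocks $0^{j_i}$ are the paddings. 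Writing $L_i$ for the ambient level at which the $i$-th copy of $T_0$ begins, one has $L_0=0$ and $L_i = \ell_{k_i}$ for integers $0 = k_0 < k_1 < \cdots < k_K$; the strict monotonicity is forced by $|s_{i-1}| \geq L_{i-1}$ together with the minimality built into the definition of $f$. Consequently $L_i \geq 2^i - 1$, and from $L_K \leq \ell$ it follows that $K \leq \log_2(\ell+1)$.

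Property~(3) of Lemma~\ref{lemmanot_Sacks1} applied to each $\rho_i$ yields $|\rho_i^{-1}\{1\}| \leq k_{i+1}-1$ for $i<K$ (since the internal length $|\rho_i|$ is strictly less than $\ell_{k_{i+1}}$), and similarly $|r_K^{-1}\{1\}| \leq M := \log_2(\ell+1)$. Summing, and using that the $k_j$ are distinct positive integers bounded above by $M$, an elementary estimate gives
\[
|t^{-1}\{1\}| \;\leq\; \sum_{j=1}^{K}(k_j - 1) + M \;\leq\; \tfrac{1}{2}M^2 + \tfrac{1}{2}M,
\]
which is at most $\log_2(\ell)^2$ for all sufficiently large $\ell$.

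The main obstacle I expect is the bookkeeping around the iterated decomposition: one needs to verify that each $\rho_i$ produced in the recursion really is a splitting node of $T_0$ (and not merely an arbitrary node of $T_{m-1}$), which follows by a secondary induction from the fact that the splitting nodes of $T_m$ are exactly those of the form $f(s) \concat s'$ with $s \in \splitting(T_{m-1}) \setminus T_{m-2}$ and $s' \in \splitting(T_0)$, implicit in Definition~\ref{p_from_the_theorem}. A subtle point is that the decomposition is not strictly unique when comparable splitting nodes are available, but any valid choice yields the same total count since all padding consists of zeros. The doubling built into $\ell_n = 2^n - 1$ simultaneously caps $K$ and the ones per copy by $\log_2(\ell+1)$, and these two factors combine to produce the quadratic $(\log_2 \ell)^2$ bound rather than a merely linear one.
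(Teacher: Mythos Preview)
Your proposal is correct and follows essentially the same approach as the paper: both decompose $t$ into a chain through successive copies of $T_0$ (the paper writes it as $t_0 \triangleleft f(t_0) \trianglelefteq t_1 \triangleleft \cdots \trianglelefteq t_j = t$, you as a concatenation $\rho_0 \concat 0^{j_1} \concat \cdots \concat r_K$), bound the number of segments by roughly $\log_2 \ell$ via the strict increase of the levels $\ell_{k_i}$, and bound the ones in each segment by roughly $\log_2 \ell$ via property~(3) of Lemma~\ref{lemmanot_Sacks1}. Your version carries out the bookkeeping more explicitly and even obtains the sharper $\tfrac{1}{2}M^2 + \tfrac{1}{2}M$, whereas the paper simply multiplies the two $\log_2 \ell$ bounds; both are a bit loose for very small $\ell$, which is harmless for the application in Lemma~\ref{lemmanot_Sacks4}.
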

\begin{proof}
For each $t \in p$, $|t| \leq \ell_{n+1}$, there is some $j \leq n \leq \log_2(|t|)$ and are $t_i \in T_i$, $i \leq j$, such that
\[t_0 \triangleleft f(t_0) \trianglelefteq t_1 \triangleleft f(t_1)\trianglelefteq \dots  \trianglelefteq
t_j=t \]
and for $i \leq j$ there are $n_i \leq n$ such that
$n_i < n_{i+1}$ for $0 \leq i \leq j$ and
and $|t_i| \in [\ell_{n_i}, \ell_{n_i+1})$. We let $t_{-1} = \emptyset$.
Now by the properties of $T_0$, the number of $k \in [|t_{i-1}|,|t_i|)$ such that $t_i(k)=1$ is bounded by
$n_{i+1} \leq n+1 \leq \log_2(\ell)$. Since also $j \leq \log_2(\ell)$, we have
\[|t^{-1}\{ 1 \}| \leq \log_2(\ell)^2.\]
\end{proof}
Now we turn our attention to the Sacks property and the question whether $\spl$ satisfies this property.
\begin{definition}\hfill
  \begin{enumerate}
    \item $\la S_n \such n < \omega\ra$ is called an \emph{$f$-slalom} if
$S_n \subseteq [\omega]^{f(n)}$.
\item 
A forcing $\bP$ has \emph{the Sacks property} if for any
$f \colon \omega \to \omega \setminus\{\emptyset\}$ such that $\lim_n f(n)= \infty$ and any $\bP$-name $\tau$ for a real and any condition $p$ there is an
$f$-slalom $\la S_n \such n < \omega \ra$ and there is $q \leq p$ such that
\[q \Vdash (\forall^\infty n)(\tau(n) \in S_n).\]
  \end{enumerate}
\end{definition}
\begin{theorem}\label{not_Sacks} At least under a condition $p$ as in Theorem \ref{not_Sacksthm}
the forcing $\spl$ does not satisfy the Sacks property.
\end{theorem}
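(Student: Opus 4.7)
The plan is to exhibit a $\spl$-name whose values at long levels admit too many possible interpretations for any slow-growing slalom to capture, contradicting the Sacks property below the condition $p$ of Theorem \ref{not_Sacksthm}. Concretely, define the name $\tau$ by $\tau(n) := \dot{x}_G \restric \ell_n$, where $\dot{x}_G$ is the canonical name for the generic real and $2^{\ell_n}$ is identified with an initial segment of $\omega$, and set $f(n) := n+1$. I will prove that for every $q \leq p$ and every $f$-slalom $\la S_n \such n < \omega \ra$, one has $q \not\Vdash (\forall^\infty n)(\tau(n) \in S_n)$.

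The core of the argument is the counting estimate
\[
|\Lev_{\ell_n}(q)|\ \geq\ \frac{\ell_n - K_q(\emptyset)}{n^2}
\]
for every $q \leq p$ and every sufficiently large $n$. To see this, fix $q$ and $n$. For each $k \in [K_q(\emptyset), \ell_n)$, the splitting condition on $q$ combined with closure under initial segments supplies $s_k \in q$ of length $k+1$ with $s_k(k) = 1$, and since $q$ is pruned, $s_k$ extends inside $q$ to some $\hat t_k \in \Lev_{\ell_n}(q)$. By Theorem \ref{not_Sacksthm}, any $t \in \Lev_{\ell_n}(q)$ has $|t^{-1}\{1\}| < \log_2(\ell_n)^2 \leq n^2$, so each such $t$ can be the chosen $\hat t_k$ for at most $n^2$ values of $k$. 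The stated bound follows by pigeonhole, and since $\ell_n = 2^n - 1$, its right-hand side eventually dominates $f(n) = n+1$.

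With the estimate in hand, a density argument finishes the proof. Fix $q \leq p$ and a slalom $\la S_n \such n < \omega\ra$. For each $N \in \omega$ let
\[
E_N\ :=\ \{\, q' \in \spl \such q' \leq q \text{ and } \exists n \geq N\ (q' \Vdash \tau(n) \notin S_n)\,\}.
\]
Given $q' \leq q$, the estimate yields $n \geq N$ with $|\Lev_{\ell_n}(q')| > f(n) \geq |S_n|$; picking $t \in \Lev_{\ell_n}(q') \setminus S_n$, we have $q' \restric t \in \spl$ and $q' \restric t \Vdash \tau(n) = t \notin S_n$, so $q' \restric t \in E_N$. Hence $E_N$ is dense below $q$, every $\spl$-generic $G$ with $q \in G$ meets every $E_N$, so $V[G] \models \exists^\infty n\ (\tau(n) \notin S_n)$, and therefore $q \not\Vdash (\forall^\infty n)(\tau(n) \in S_n)$.

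I expect the counting estimate to be the main obstacle; it is the one step that must exploit both features of $p$ simultaneously. The splitting condition forces many distinct positions $k$ to be hit by a $1$ somewhere in $\Lev_{\ell_n}(q)$, while the sparseness of $1$'s supplied by Theorem \ref{not_Sacksthm} prevents these hits from concentrating on too few nodes. Once the bound is in place, the forcing-semantic reduction and the density argument are routine.
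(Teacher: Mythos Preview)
Your proof is correct and follows essentially the same route as the paper. The paper packages the pigeonhole step as a lower bound on the auxiliary quantity $K(q,t,\ell_n)=\min\{|\Lev_{\ell_n}(r)|: r\leq_0 q\restric t\}$ (Lemma~\ref{lemmanot_Sacks4}) and then proves a general escape lemma for arbitrary diverging $f$ (Lemma~\ref{unreachable reals 2}); you instead apply the same pigeonhole directly to $|\Lev_{\ell_n}(q')|$ and fix $f(n)=n+1$, which suffices to negate the Sacks property. Your version is a legitimate streamlining: it avoids the $K(q,t,n)$ apparatus and the subsequence $\la m_{f,i}\ra$, at the cost of not isolating the reusable lemma that works for all $f$.
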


The proof consists of the following two lemmata.
\begin{lemma}\label{lemmanot_Sacks4} Let $p$ be as in the Theorem \ref{not_Sacksthm}. Then
\[(\forall q \leq p)(\forall t \in \splitting(q)) (\forall^\infty n)\Bigl( K(q,t,\ell_n) \geq \frac{\ell_n}{2\cdot \log_2(\ell_n)^2}\Bigr).
\]
\end{lemma}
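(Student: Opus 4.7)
The plan is a double-counting argument that exploits the tension between the fatness requirement of $\spl$-conditions and the sparsity of ones in nodes of $p$ guaranteed by Lemma \ref{lemmanot_Sacks3}. Fix $q \leq p$, a splitting node $t \in \splitting(q)$, and set $M := K_q(t)$. Let $r \leq_0 q \restric t$ be arbitrary. By the definition of $\leq_0$ we have $\stem(r) = t$, $K_r(t) = M$, and $r \subseteq p$.

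First I would show that for every $j \in [M, \ell_n)$ there is some $s_j \in \Lev_{\ell_n}(r)$ with $s_j(j) = 1$. Applying Definition \ref{def:spl} to $t \in r$ and $j \geq K_r(t) = M$ yields $t' \in r$ with $t \trianglelefteq t'$ and $t'(j) = 1$; since every $\spl$-condition is perfect with arbitrarily long extensions above every node, $t'$ extends inside $r$ to some $s_j \in \Lev_{\ell_n}(r)$, and $s_j(j) = 1$ is inherited. With the witnesses $s_j$ in hand I would double-count the set
\[
S := \{(j, s) : j \in [M, \ell_n),\ s \in \Lev_{\ell_n}(r),\ s(j) = 1\}.
\]
On the one hand $|S| \geq \ell_n - M$, because $(j, s_j) \in S$ for each $j \in [M, \ell_n)$. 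On the other hand, every $s \in \Lev_{\ell_n}(r) \subseteq \Lev_{\ell_n}(p)$ satisfies $|s^{-1}\{1\}| \leq \log_2(\ell_n)^2$ by Lemma \ref{lemmanot_Sacks3}, so each $s$ contributes at most $\log_2(\ell_n)^2$ pairs to $S$. Combining the two estimates,
\[
|\Lev_{\ell_n}(r)| \;\geq\; \frac{\ell_n - M}{\log_2(\ell_n)^2}.
\]

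Since $M$ depends only on $q$ and $t$, for all sufficiently large $n$ we have $\ell_n \geq 2M$, so the right-hand side is at least $\ell_n/(2\log_2(\ell_n)^2)$. As $r$ was an arbitrary $\leq_0$-extension of $q \restric t$, taking the minimum yields $K(q,t,\ell_n) \geq \ell_n/(2\log_2(\ell_n)^2)$, as required. The only mildly subtle point is the extension step inside $r$, which relies on the perfectness of $r$ in the full sense (every node has arbitrarily long extensions), a property that is immediate for $\spl$-conditions from the fatness axiom itself; beyond that the argument is elementary counting, so I do not anticipate a genuine obstacle.
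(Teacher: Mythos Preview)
Your proof is correct and follows essentially the same approach as the paper's: both exploit the tension between the $\spl$-requirement that every level $j \geq K_q(t)$ is hit by a node with a $1$ at position $j$, and the bound $|s^{-1}\{1\}| \leq \log_2(\ell_n)^2$ from Lemma~\ref{lemmanot_Sacks3}, via a pigeonhole/double-counting step. The only cosmetic difference is that the paper argues by contradiction (assuming a slim witnessing tree exists and then finding a single node with too many ones), whereas you carry out the count directly for an arbitrary $r \leq_0 q \restric t$; the underlying combinatorics are identical.
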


\begin{proof} Suppose not and fix a $q \leq p$ and a node $t\in \splitting(q)$ such that
$K_q(t) = k$ and 
\[
(\exists^\infty n) K(q,t,\ell_n) < \frac{\ell_n}{2\cdot \log_2(\ell_n)^2}=: M_n.
\]
We fix such an $n$ such that $\ell_n >2k$.
We fix a slimmest $K_q(t)$ witnessing tree $W \subseteq \Lev_{\leq \ell_{n}}(q\restric t)$ of width $K(q,t,\ell_{n})< M_n$ at level $\ell_n$.
That means $|\level_{\ell_n}(W)|=K(q,t,\ell_{n})$ and on each height $i \in [k, \ell_{n})$ there is at least one $s \in W$ such that $s(i)=1$. Hence there is $s \in \level_{\ell_n}(W)$ such that 
$s$ has at least $\frac{1}{M_n} \cdot (\ell_{n} - k)$ many $i \in [k, \ell_{n})$ such that $s(i) = 1$. We pick such an $s \in W\cap \Lev_{\ell_{n}}(q)$. 
However now $s$ shows 
\begin{equation*}\label{ungl2}\frac{1}{M_n} \cdot (\ell_{n} - k) > \frac{\ell_n}{ 2 \cdot M_n} = \log_2(\ell_{n})^2.
\end{equation*}
This contradicts the property \eqref{ungl1} that says that there are at most $\log_2(\ell_{n})^2$ many $i < |s|$ such that $s(i)=1$. 
 \end{proof}

Now we apply the next lemma with
\[g(n) = \frac{n}{2 \cdot \log_2(n)^2}.
\]

\begin{lemma}\label{unreachable reals 2}
Let $g \colon \omega \to \omega$ be such that $\lim_{n \to \infty} g(n) = \infty$.
 Let $p$ be such that  $(\forall q \leq p)(\forall t \in \splitting(q))(\forall ^\infty n )(K(q,t,\ell_n)  \geq g(\ell_n))$. For any function $f \colon \omega \to \omega \setminus\{\emptyset\}$
  there is a $\spl$-name $\tau$ such that for any $f$-slalom
  $\la S_n \such n < \omega\ra \in V$
  \[p\Vdash_\spl (\exists^\infty n) (\tau(n) \not\in S_n).\] 
\end{lemma}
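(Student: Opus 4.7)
The plan is to define $\tau$ so that it encodes the generic branch at levels chosen as a function of $f$, and then use the hypothesis on $K$-values to defeat every $f$-slalom in $V$. Working in $V$, I will first pick a strictly increasing $h\colon \omega\to \omega$ such that $g(\ell_{h(n)}) > f(n)$ for every $n$; this is possible since $g$ is unbounded and $\ell_m\to\infty$. Fixing a bijection $\codes\colon 2^{<\omega}\to \omega$ in $V$, I let $\tau$ be the canonical $\spl$-name for the function $\tau(n) = \codes(\dot{x}_G \restric \ell_{h(n)})$, where $\dot{x}_G$ is the canonical name for the generic branch through $p$.

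To verify the defeating property I argue by contradiction. Suppose there are an $f$-slalom $\la S_n \such n<\omega \ra \in V$, a condition $q\leq p$ and some $n_0 <\omega$ with $q\Vdash (\forall n\geq n_0)(\tau(n)\in S_n)$. For each $n\geq n_0$ and each $s\in \Lev_{\ell_{h(n)}}(q)$, the condition $q\restric s$ extends $q$ and forces $\dot{x}_G \restric \ell_{h(n)} = s$, so $q\restric s \Vdash \codes(s) \in S_n$. Since $\codes(s) \in V$ and $S_n \in V$, absoluteness gives $\codes(s)\in S_n$ in $V$. Injectivity of $\codes$ then yields $|\Lev_{\ell_{h(n)}}(q)| \leq |S_n| \leq f(n)$ for all $n\geq n_0$.

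For the contradiction I invoke the hypothesis on $p$. Since $q\leq p$ and $\stem(q)\in \splitting(q)$, for all sufficiently large $m$ we have $K(q,\stem(q),\ell_m) \geq g(\ell_m)$. The trivial observation $q\leq_0 q$, combined with the definition of $K$, yields $|\Lev_{\ell_m}(q)| \geq K(q,\stem(q),\ell_m)$. Since $h$ is strictly increasing, $h(n)$ eventually exceeds any fixed threshold, so the choice of $h$ gives $|\Lev_{\ell_{h(n)}}(q)| \geq g(\ell_{h(n)}) > f(n)$ for all sufficiently large $n$, contradicting the bound of the previous paragraph. The main obstacle is selecting $h$ so that one single name $\tau$ diagonalizes against every $f$-slalom simultaneously; once that is arranged, the remainder is a routine density-and-absoluteness argument together with the trivial lower bound on $|\Lev_{\ell_m}(q)|$ supplied by the hypothesis.
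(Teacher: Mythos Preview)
Your proof is correct and follows essentially the same route as the paper: define $\tau(n)$ as a code for $\dot{x}_G\restric m_n$ where $\la m_n\ra$ is an increasing subsequence of $\la \ell_n\ra$ chosen so that $g(m_n)>f(n)$, and then use the hypothesis to guarantee that any $q\leq p$ has more than $f(n)$ nodes at level $m_n$ for large $n$, so some node's code misses $S_n$. The only cosmetic difference is that the paper phrases this as a direct density argument (below any $q\leq p$ and any threshold, find $r\leq q$ and $n$ with $r\Vdash \tau(n)\notin S_n$), whereas you package the same computation as a proof by contradiction; these are equivalent formulations of the same idea.
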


\begin{proof}
Fix $f \in \omega^\omega \cap V$. 
We give a name $\dot{h_{f}} \in \omega^\omega \cap V^\spl$ so that for every slalom $S \in ([\omega]^{<\omega})^\omega \cap V$, with $|S(n)|\leq f(n)$, we have that $p$ forces that $\dot{h_{f}}$ is not captured by $S$, i.e., 
\[
p \Vdash (\exists n \in \omega )\dot{h_{f}}(n) \notin S(n).
\] 
We let $\dot{x}$ be the name for the $\spl$-generic real,
i.e. the union $\bigcup\{\stem(p) \such p \in G\}$.

Let $\codes \colon 2^{<\omega} \to \omega$
be an injective function.

We fix an increasing subsequence $\la m_{f,i} \such i < \omega\ra\subseteq \la \ell_n \such n<\omega \ra$ such that 
\[
(\forall i)(g(m_{f,i}) > f(i)).
\]
We define
\[
\tau = \dot{h_{f}} := \langle \codes(\dot{x}\restric m_{f,n}) \such  n \in \omega \rangle.
\] 
We aim to show that any $q \leq p$ forces that $\dot{h_{f}}$ cannot be captured by any $f$-slalom in the ground model.
So fix an $f$-slalom $S \in V$ and $q \leq p$.

Let $t = \stem(q)$.
It is enough to find $n \in \omega$, $r \leq q$ such that $r \force \dot{h_{f}}(n) \notin S(n)$. Pick $n_0 \in \omega$ such that for any $n' \geq n_0$ 
\[
K(q,t,{m_{f,n'}})\geq g(m_{f,n'}) > f(n').
\]
Hence $|\level_{m_{f,n_0}}(q\restric t)|\geq g({m_{f,n_0}}) > f(n_0)$. Let $\{t_k \in q\restric t \such |t|=m_{f,n_0} , k < g(m_{f,n_0})\}$ enumerate
the first $g(m_{f,n_0})$ nodes  of  level ${m_{f,n_0}}$  of $q\restric t$.
Then 
\[
q \restric t_k \force \dot{h_{f}}(n_0)= \codes (\dot{x} \restric m_{f,n_0}) = \codes (t_k),
\]
Since $|S(n_0)| \leq f(n_0) < g(m_{f,n_0})$ there is a $t_k$ such that
\[
q \restric t_k \force \codes(\dot{x}\restric m_{f,n_0}) = \codes (t_k) = \dot{h_{f}}(n_0) \notin S(n_0).
\]
So, $r :=q \restric t_k$ is the condition  with the desired property.
\end{proof}

\nothing{
To get an overall result on the failure of the Sacks property one could aim at a positive answer to the following question.

\begin{question}
Is there a diverging function $g$ such that there are densely many $p \in \spl$
with the property  
\[(\forall q \leq p)(\forall t \in \splitting(q)
(\forall^\infty n)( K(q,t,n) \geq g(n))?\]
\end{question}

Since the forcing $\spl$ is $\omega^\omega$-bounding, also an affirmative answer to the following question would suffice for the failure of the Sacks property.
\begin{question}
Are there densely many $p \in \spl$
with the property that there is a diverging function $g_p$ that fulfills  
\[(\forall q \leq p)(\forall t \in \splitting(q)
(\forall^\infty n)( K(q,t,n) \geq g_p(n))?\]
\end{question}
}

As we already mentioned, meanwhile Jonathan Schilhan found a complete negative answer to our question \cite{Schilhan}.

\def\cprime{$'$} \def\germ{\frak} \def\scr{\cal} \ifx\documentclass\undefinedcs
  \def\bf{\fam\bffam\tenbf}\def\rm{\fam0\tenrm}\fi 
  \def\defaultdefine#1#2{\expandafter\ifx\csname#1\endcsname\relax
  \expandafter\def\csname#1\endcsname{#2}\fi} \defaultdefine{Bbb}{\bf}
  \defaultdefine{frak}{\bf} \defaultdefine{=}{\B} 
  \defaultdefine{mathfrak}{\frak} \defaultdefine{mathbb}{\bf}
  \defaultdefine{mathcal}{\cal} \defaultdefine{implies}{\Rightarrow}
  \defaultdefine{beth}{BETH}\defaultdefine{cal}{\bf} \def\bbfI{{\Bbb I}}
  \def\mbox{\hbox} \def\text{\hbox} \def\om{\omega} \def\Cal#1{{\bf #1}}
  \def\pcf{pcf} \defaultdefine{cf}{cf} \defaultdefine{reals}{{\Bbb R}}
  \defaultdefine{real}{{\Bbb R}} \def\restriction{{|}} \def\club{CLUB}
  \def\w{\omega} \def\exist{\exists} \def\se{{\germ se}} \def\bb{{\bf b}}
  \def\equivalence{\equiv} \let\lt< \let\gt>

\nothing{
\bibliographystyle{plain}
\bibliography{../sh/lit,../sh/listb,../sh/lista}

\def\cprime{$'$} \def\germ{\frak} \def\scr{\cal} \ifx\documentclass\undefinedcs
  \def\bf{\fam\bffam\tenbf}\def\rm{\fam0\tenrm}\fi 
  \def\defaultdefine#1#2{\expandafter\ifx\csname#1\endcsname\relax
  \expandafter\def\csname#1\endcsname{#2}\fi} \defaultdefine{Bbb}{\bf}
  \defaultdefine{frak}{\bf} \defaultdefine{=}{\B} 
  \defaultdefine{mathfrak}{\frak} \defaultdefine{mathbb}{\bf}
  \defaultdefine{mathcal}{\cal} \defaultdefine{implies}{\Rightarrow}
  \defaultdefine{beth}{BETH}\defaultdefine{cal}{\bf} \def\bbfI{{\Bbb I}}
  \def\mbox{\hbox} \def\text{\hbox} \def\om{\omega} \def\Cal#1{{\bf #1}}
  \def\pcf{pcf} \defaultdefine{cf}{cf} \defaultdefine{reals}{{\Bbb R}}
  \defaultdefine{real}{{\Bbb R}} \def\restriction{{|}} \def\club{CLUB}
  \def\w{\omega} \def\exist{\exists} \def\se{{\germ se}} \def\bb{{\bf b}}
  \def\equivalence{\equiv} \let\lt< \let\gt>
\begin{thebibliography}{10}

\bibitem{BJ}
Tomek Bartoszy\'{n}ski and Haim Judah.
\newblock {\em {Set Theory, On the Structure of the Real Line}}.
\newblock A K Peters, 1995.

\bibitem{BrendleHalbeisenLoewe}
J\"{o}rg Brendle, Lorenz Halbeisen, and Benedikt L\"{o}we.
\newblock Silver measurability and its relation to other regularity properties.
\newblock {\em Math. Proc. Cambridge Philos. Soc.}, 138(1):135--149, 2005.

\bibitem{brendleloewe1999}
J\"{o}rg Brendle and Benedikt L\"{o}we.
\newblock Solovay-type characterizations for forcing-algebras.
\newblock {\em J. Symbolic Logic}, 64(3):1307--1323, 1999.

\bibitem{Bukovsky_Coplakova}
Lev Bukovsk\'{y} and Eva Copl\'{a}kov\'{a}-Hartov\'{a}.
\newblock Minimal collapsing extensions of models of {${\rm ZFC}$}.
\newblock {\em Ann. Pure Appl. Logic}, 46(3):265--298, 1990.

\bibitem{canjar:mathias}
R.~Michael Canjar.
\newblock Mathias forcing which does not add dominating reals.
\newblock {\em Proc. Amer. Math. Soc.}, 104:1239--1248, 1988.

\bibitem{FarahSemiselective}
Ilijas Farah.
\newblock Semiselective coideals.
\newblock {\em Mathematika}, 45(1):79--103, 1998.

\bibitem{Farah04analytichausdorff}
Ilijas Farah.
\newblock Analytic {H}ausdorff gaps. {II}. {T}he density zero ideal.
\newblock {\em Israel J. Math.}, 154:235--246, 2006.

\bibitem{Farah2006}
Ilijas Farah and Jind\v{r}ich Zapletal.
\newblock Four and more.
\newblock {\em Annals of Pure and Applied Logic}, 140(1):3 -- 39, 2006.
\newblock Cardinal Arithmetic at work: the 8th Midrasha Mathematicae Workshop.

\bibitem{Grigorieff}
Serge Grigorieff.
\newblock Combinatorics of ideals and forcing.
\newblock {\em Ann. Math. Logic}, 3:363--394, 1971.

\bibitem{Halbeisen}
Lorenz Halbeisen.
\newblock {\em Combinatorial Set Theory}.
\newblock Springer, 2012.

\bibitem{HRUSAK2014880}
Michael Hru\v{s}\'{a}k and Hiroaki Minami.
\newblock Mathias-Prikry and Laver-Prikry type forcing.
\newblock {\em Annals of Pure and Applied Logic}, 165(3):880 -- 894, 2014.

\bibitem{khomskii_laguzzi}
Yurii Khomskii and Giorgio Laguzzi.
\newblock Full-splitting {M}iller trees and infinitely often equal reals.
\newblock {\em Ann. Pure Appl. Logic}, 168(8):1491--1506, 2017.

\bibitem{fat}
Giorgio Laguzzi, Heike Mildenberger, and Brendan Stuber-Rousselle.
\newblock Fat splitting trees.
\newblock {\em preprint}, 2020.
\newblock http://home.mathematik.uni-freiburg.de/giorgio/publist.html.

\bibitem{Laguzzi_Stuber}
Giorgio Laguzzi and Brendan Stuber-Rousselle.
\newblock More on trees and {C}ohen reals.
\newblock {\em MLQ Math. Log. Q.}, 66(2):173--181, 2020.

\bibitem{MR4054777}
Paolo Leonetti and Salvatore Tringali.
\newblock On the notions of upper and lower density.
\newblock {\em Proc. Edinb. Math. Soc. (2)}, 63(1):139--167, 2020.

\bibitem{Leonetti_Tringali}
Paolo Leonetti and Salvatore Tringali.
\newblock On the notions of upper and lower density.
\newblock {\em Proc. Edinb. Math. Soc. (2)}, 63(1):139--167, 2020.

\bibitem{Mathias:happy}
Adrian Mathias.
\newblock Happy families.
\newblock {\em Ann, Math. Logic}, 12:59--111, 1977.

\bibitem{PriscoH00}
Carlos A.~Di Prisco and James~M. Henle.
\newblock Doughnuts, floating ordinals, square brackets, and ultraflitters.
\newblock {\em J. Symb. Log.}, 65(1):461--473, 2000.

\bibitem{raghavan2017density}
Dilip Raghavan.
\newblock More on the density zero ideal, 2017.

\bibitem{RoSh:470}
Andrzej Ros{\l}anowski and Saharon Shelah.
\newblock {\em {Norms on Possibilities I: Forcing with Trees and Creatures}},
  volume 141 (no. 671) of {\em Memoirs of the American Mathematical Society}.
\newblock AMS, 1999.

\bibitem{Schilhan}
Jonathan Schilhan.
\newblock Private communication.
\newblock 2020.

\bibitem{Szpilrajn1935}
Edward Szpilrajn.
\newblock Sur une classe de fonctions de m. Sierpi\'nski et la classe
  correspondante d'ensembles.
\newblock {\em Fundamenta Mathematicae}, 24(1):17--34, 1935.

\end{thebibliography}
}

\end{document}